\newtheorem{theorem}{Theorem}[section]
\newtheorem{lemma}[theorem]{Lemma}
\newtheorem{proposition}[theorem]{Proposition}
\newtheorem{corollary}[theorem]{Corollary}
\newtheorem{main}{Theorem}
\theoremstyle{definition}
\newtheorem{definition}[theorem]{Definition}
\newtheorem{example}[theorem]{Example}
\theoremstyle{remark}
\newtheorem{remark}[theorem]{Remark}
\numberwithin{equation}{section}
\newcommand{\R}{\ensuremath{\mathbb{R}}}
\newcommand{\N}{\ensuremath{\mathbb{N}}}
\newcommand{\bb}{\mathscr{B}}
\newcommand{\cb}{ {\mathscr{C}}}
\newcommand{\was}{\widetilde{\mathbf{U}}^*}
\renewcommand{\u}{\ensuremath{\mathcal{U}}}
\newcommand{\ub}{\mathscr{U}}
\newcommand{\us}{\mathbf{U}}
\newcommand{\wus}{\widetilde{\mathbf{U}}}
\newcommand{\ws}{\mathbf{W}}
 \newcommand{\wvs}{\widetilde{\mathbf{V}}}
 \newcommand{\vb}{\mathscr{V}}
\newcommand{\vs}{\mathbf{V}}
\newcommand{\sv}{\mathbf{a}}
\renewcommand{\sb}{\mathbf{b}}
\newcommand{\sd}{\mathbf{d}}
\renewcommand{\sc}{\mathbf{c}}
\newcommand{\set}[1]{\left\{#1\right\}}
\newcommand{\la}{\lambda}
\newcommand{\ep}{\varepsilon}
\newcommand{\f}{\infty}
\newcommand{\de}{\alpha^*}
\newcommand{\al}{\alpha}
\newcommand{\lle}{\preccurlyeq}
\newcommand{\lge}{\succcurlyeq}
\newcommand{\si}{\sigma}
\newcommand{\ra}{\rightarrow}
\newcommand{\diam}{\operatorname{diam}}
\begin{document}

\title{Relative bifurcation sets and the local dimension of univoque bases}

\author{Pieter Allaart}
\address[P. Allaart]{Mathematics Department, University of North Texas, 1155 Union Cir \#311430, Denton, TX 76203-5017, U.S.A.}
\email{allaart@unt.edu}

\author{Derong Kong}
\address[D. Kong]{College of Mathematics and Statistics, Chongqing University, 401331, Chongqing, P.R.China}
\email{derongkong@126.com}

\dedicatory{}


\subjclass[2010]{Primary:11A63, Secondary: 37B10, 28A78}

\begin{abstract}
Fix an alphabet $A=\{0,1,\dots,M\}$ with $M\in\mathbb{N}$. The univoque set $\mathscr{U}$ of bases $q\in(1,M+1)$ in which the number $1$ has a unique expansion over the alphabet $A$ has been well studied. It has Lebesgue measure zero but Hausdorff dimension one. This paper investigates how the set $\mathscr{U}$ is distributed over the interval $(1,M+1)$ by determining the limit
$$f(q):=\lim_{\delta\to 0}\dim_H\big(\mathscr{U}\cap(q-\delta,q+\delta)\big)$$
for all $q\in(1,M+1)$. We show in particular that $f(q)>0$ if and only if $q\in\overline{\mathscr{U}}\backslash\mathscr{C}$, where $\mathscr{C}$ is an uncountable set of Hausdorff dimension zero, and $f$ is continuous at those (and only those) points where it vanishes. Furthermore, we introduce a countable family of pairwise disjoint subsets of $\mathscr{U}$ called {\em relative bifurcation sets}, and use them to  give an explicit expression for the Hausdorff dimension of the intersection of $\mathscr{U}$ with any interval, answering a question of Kalle et al.~[{\em arXiv:1612.07982; to appear in Acta Arithmetica}, 2018]. Finally, the methods developed in this paper are used to give a complete answer to a question of the first author [{\em Adv. Math.}, 308:575--598, 2017] about strongly univoque sets.
\end{abstract}

\keywords{Univoque bases; Hausdorff dimension; Topological entropy; Bifurcation set; Relative entropy plateau; Strongly univoque set}

\maketitle

\section{Introduction} \label{s1}

Fix an integer $M\ge 1$. For $q\in(1,M+1]$, any real number $x$ in the interval $I_{M,q}:=[0, M/(q-1)]$ can be represented as 
\begin{equation} \label{eq:projection-pi-q}
x=\pi_q((d_i)):=\sum_{i=1}^\f\frac{d_i}{q^i},
\end{equation}
where $d_i\in\set{0,1,\ldots, M}$ for all $i\ge 1.$
The infinite sequence $(d_i)=d_1d_2\ldots$ is called a \emph{$q$-expansion} of $x$ with \emph{alphabet} $\set{0,1,\ldots, M}$. 
Such non-integer base expansions have been studied since the pioneering work of R\'enyi \cite{Renyi_1957} and Parry \cite{Parry_1960}. In the 1990's, work by Erd\H os et al.~\cite{Erdos_Joo_Komornik_1990, Erdos_Horvath_Joo_1991, Erdos_Joo_1992} inspired an explosion of research papers on the subject, covering unique expansions \cite{AlcarazBarrera-Baker-Kong-2016, DeVries_Komornik_2008, Glendinning_Sidorov_2001,  Komornik-Kong-Li-17}, finitely or countably many expansions \cite{Baker_2015, Baker_Sidorov_2014, Komornik-Kong-2018, Sidorov_2009}, uncountably many expansions and random expansions \cite{Dajani_DeVries_2007, Sidorov_2003}. Non-integer base expansions have furthermore been connected with Bernoulli convolutions \cite{Jordan-Shmerkin-Solomyak-2011}, Diophantine approximation \cite{Lu-Wu-2016}, singular self-affine functions \cite{Allaart-2016}, open dynamical systems \cite{Sidorov_2007}, and intersections of Cantor sets \cite{Kong_Li_Dekking_2010}.

Let 
\begin{equation*} 
\ub:=\set{q\in(1, M+1]: 1\textrm{ has a unique }q\textrm{-expansion of the form } (\ref{eq:projection-pi-q})}.
\end{equation*}
Thus for each $q\in\ub$ there exists a unique sequence $(a_i)\in\Omega_M:=\set{0,1,\ldots, M}^\N$ such that $1=\pi_q((a_i))$. 
The set $\ub$ was extensively studied for over 25 years.  Erd\H os et al.~\cite{Erdos_Joo_Komornik_1990} showed that $\ub$ is uncountable and of zero Lebesgue measure. Dar\'oczy and K\'atai \cite{Darczy_Katai_1995} proved that $\ub$ has full Hausdorff dimension (see also \cite{Komornik-Kong-Li-17}). Komornik and Loreti \cite{Komornik-Loreti-1998, Komornik_Loreti_2002} found its smallest element $q_{KL}=q_{KL}(M)$, which is now called the \emph{Komornik-Loreti constant} and is related to the Thue-Morse sequence (see (\ref{eq:lambda}) below). Later in \cite{Komornik_Loreti_2007} the same authors proved that its topological closure $\overline{\ub}$ is a Cantor set, i.e., a non-empty compact set having neither interior nor isolated points. Recently, Dajani et al.~\cite{Dajani-Komornik-Kong-Li-2018} proved that the algebraic difference $\ub-\ub$ contains an interval. Furthermore,  the set $\ub$ also has intimate connections with kneading sequences of unimodal expanding maps (cf.~\cite{Allouche_Cosnard_1983, Allouche-Cosnard-2001}), and even with the real slice of the boundary of the Mandelbrot set \cite{Bon-Car-Ste-Giu-2013}. 

The main purpose of this paper is to describe the distribution of $\ub$. More precisely, we are interested in the \emph{local dimensional function}
\[
f(q):=\lim_{\delta\ra 0}\dim_H(\ub\cap(q-\delta, q+\delta)), \qquad q\in(1, M+1],
\]
as well as its one-sided analogs
\[
f_-(q):=\lim_{\delta\ra 0}\dim_H (\ub\cap(q-\delta, q)),\qquad f_+(q):=\lim_{\delta\ra 0}\dim_H(\ub\cap(q, q+\delta)),
\]
which we call the \emph{left and right local dimensional functions} of $\ub$. Note that $f=\max\set{f_-, f_+}$, and if $q\notin\overline{\ub}$, then $f(q)=f_-(q)=f_+(q)=0$. 
{Extending a recent result by the authors and Baker \cite{Allaart-Baker-Kong-17},} we compute $f(q)$, $f_-(q)$ and $f_+(q)$ for every $q\in(1,M+1]$ in terms of a kind of localized entropy. As an application we compute the Hausdorff dimension of the intersection of $\ub$ with any interval, answering a question of Kalle et al.~\cite{Kalle-Kong-Li-Lv-2016}. In addition, our methods allow us to give a complete answer to a question of the first author \cite{Allaart-2017} about strongly univoque sets.

\subsection{Univoque set, entropy plateaus and the bifurcation set}
 
In order to state our main results, some notation is necessary.
For $q\in(1,M+1]$ let $\u_q$ be the \emph{univoque set} of $x\in I_{M,q}$ having a unique $q$-expansion as in (\ref{eq:projection-pi-q}). Let $\us_q$ be the set of corresponding sequences, i.e., 
\[
\us_q:=\set{(d_i)\in\Omega_M: \pi_q((d_i))\in\u_q}.
\]
A useful tool in the study of unique expansions is the lexicographical characterization of $\us_q$ (cf.~\cite{Baiocchi_Komornik_2007, DeVries_Komornik_2008}):
$(d_i)\in\us_q$ if and only if $(d_i)\in\Omega_M$ satisfies
\begin{equation}\label{eq:characterization-unique expansion}
\begin{split}
d_{n+1}d_{n+2}\ldots&\prec \al(q)\qquad\textrm{if}\quad d_n<M,\\
d_{n+1}d_{n+2}\ldots&\succ\overline{\al(q)}\qquad\textrm{if}\quad d_n>0,
\end{split}
\end{equation}
where $\al(q)=(\al_i(q))\in\Omega_M$ is the lexicographically largest $q$-expansion of $1$ not ending with $0^\f$, called the \emph{quasi-greedy} $q$-expansion of $1$, and $\overline{\al(q)}:=(M-\al_i(q))$. Here and throughout the paper we will use the lexicographical order between sequences and blocks in a natural way. 

Note by (\ref{eq:characterization-unique expansion}) that any sequence $(d_i)\in\us_q\setminus\set{0^\f, M^\f}$ has a tail sequence in the set
\begin{equation} \label{eq:def-widetilde-uq}
\wus_q:=\set{(d_i)\in\Omega_M: \overline{\al(q)}\prec\si^n((d_i))\prec \al(q)\ \forall n\ge 0},
\end{equation}
where $\sigma$ denotes the left shift map on $\Omega_M$.
Furthermore, $\us_q$ and $\wus_q$ have the same topological entropy, i.e., $h(\us_q)=h(\wus_q)$, where the \emph{topological entropy} of a subset $X\subset\Omega_M$ is defined by
\[
h(X):=\liminf_{n\ra\f}\frac{\log \# B_n(X)}{n}
\]
(cf.~\cite{Lind_Marcus_1995}). Here $\#B_n(X)$ denotes the number of all length $n$ blocks occurring in sequences from $X$, and ``$\log$" denotes the natural logarithm. We may thus obtain all the relevant information about $\us_q$ by studying the simpler set $\wus_q$.

Since the map $q\mapsto \al(q)$ is strictly increasing on $(1, M+1]$ (see Lemma \ref{lem:quasi-greedy expansion-alpha-q} below), (\ref{eq:def-widetilde-uq}) implies that the set-valued map $q\mapsto \wus_q$ is non-decreasing, and hence the entropy function $H: q\mapsto h(\wus_q)$ is non-decreasing. Recently, Komornik et al.~\cite{Komornik-Kong-Li-17} and the present authors \cite{Allaart-Kong-2018} proved the following:

\begin{theorem}[\cite{Komornik-Kong-Li-17,Allaart-Kong-2018}] \label{thm:devil-staircase}
The graph of $H$ is a Devil's staircase:
\begin{enumerate}[(i)] 
\item $H$ is non-decreasing and continuous on $(1, M+1]$; 
\item $H$ is locally constant almost everywhere on $(1, M+1]$;
\item $H(q)>0$ if and only if $q>q_{KL}$, where $q_{KL}$ is the {Komornik-Loreti constant}. 
\end{enumerate}
\end{theorem}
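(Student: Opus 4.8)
The plan is to verify the three items in the order (i)--(iii), using throughout the lexicographic description \eqref{eq:def-widetilde-uq} of $\wus_q$ together with standard facts about topological entropy of subshifts. Monotonicity of $H$ is already recorded above: by Lemma~\ref{lem:quasi-greedy expansion-alpha-q} the map $q\mapsto\alpha(q)$ is strictly increasing, so $q\mapsto\wus_q$ is non-decreasing for inclusion and hence $H$ is non-decreasing. Consequently $H$ can only fail to be continuous through jump discontinuities, so for (i) it suffices to prove that $H$ is both left- and right-continuous at every $q\in(1,M+1]$.

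The heart of the argument, and what I expect to be the main obstacle, is an \emph{interior approximation of $\wus_q$ by subshifts of finite type that is stable under small perturbations of $q$}. Precisely: for each $q$ and each $\ep>0$ I would produce a word length $N=N(\ep)$ and a subshift of finite type $X$, all of whose forbidden words have length $\le N$, such that $X\subseteq\wus_q$, $h(X)>H(q)-\ep$, and — crucially — such that membership in $X$ is witnessed only by the finite inequalities $\overline{\alpha(q)}|_N\prec(\text{window of length }N)\prec\alpha(q)|_N$ holding with a genuine margin. Granting this, left-continuity is immediate: if $p<q$ is close enough to $q$, then $\alpha(p)|_N=\alpha(q)|_N$ (as $\alpha(p)\uparrow\alpha(q)$ when $p\uparrow q$, $\alpha$ being increasing), so the same finite inequalities persist with $\alpha(p)$ in place of $\alpha(q)$; hence $X\subseteq\wus_p$ and $H(p)\ge h(X)>H(q)-\ep$, which with monotonicity gives $\lim_{p\uparrow q}H(p)=H(q)$. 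Building $X$ is where the real work lies: one expects to exploit the combinatorial structure of $\wus_q$ — for instance a decomposition of the admissible sequences governed by Lyndon (``generalized Parry'') blocks, or a direct coding argument exhibiting two incomparable blocks that may be concatenated freely within the constraints — and then transfer the entropy of $\wus_q$ (equivalently of its closure) onto the internal SFT via the variational principle.

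Right-continuity is the delicate point, since $\alpha$ need not be right-continuous: at a \emph{simple} base $q$ (where the greedy expansion of $1$ is finite) one has $\lim_{p\downarrow q}\alpha(p)\succ\alpha(q)$, so $\bigcap_{p>q}\wus_p$ is strictly larger than $\wus_q$ and the SFT trick above does not by itself control $H$ from above on the right. I would treat simple bases directly: describe $\wus_p$ explicitly for $p$ slightly larger than such a $q$ — the new admissible sequences differ from those in $\wus_q$ only by admitting one extra ``return word'' — and check that this extra freedom does not raise the entropy, which is exactly the assertion that $q$ lies at the left endpoint of an entropy plateau. At every non-simple $q$, $\alpha$ is continuous, $\lim_{p\downarrow q}\alpha(p)=\alpha(q)$, and right-continuity of $H$ follows from the same SFT argument applied with $p>q$. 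Combining the two cases proves (i).

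For (iii), recall that $q_{KL}$ is the smallest element of $\overline{\ub}$, with $\alpha(q_{KL})$ equal to the (shifted) Thue--Morse-type sequence $\lambda$ of \eqref{eq:lambda}. If $q\le q_{KL}$, then $\alpha(q)\preccurlyeq\lambda$, and from \eqref{eq:def-widetilde-uq} one checks that $\wus_q$ injects (for $q<q_{KL}$) into a countable set, while $\wus_{q_{KL}}$ sits inside the orbit closure of $\lambda$; either way the topological entropy is $0$, so $H(q)=0$. Conversely, if $q>q_{KL}$, then $\alpha(q)\succ\lambda$ strictly at some finite coordinate, and this strict inequality — a finite computation with the prefixes of $\alpha(q)$ and $\overline{\alpha(q)}$ — leaves room to embed a full shift on two incomparable blocks inside $\wus_q$, so $H(q)>0$. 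Finally, for (ii) I would show that $H$ is locally constant at every $q\notin\overline{\ub}$: on each connected component of the open set $(1,M+1]\setminus\overline{\ub}$ the constraint words $\alpha(q),\overline{\alpha(q)}$ vary only within a range in which $\wus_q$ stays squeezed between two fixed subshifts of equal entropy, so $H$ is constant there. Since $\overline{\ub}$ is a nowhere dense Cantor set of Lebesgue measure zero (Erd\H os--Jo\'o--Komornik), $H$ is locally constant almost everywhere, and together with (i) this makes the graph of $H$ a Devil's staircase.
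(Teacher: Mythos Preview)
The paper does not prove this theorem: it is quoted as a result of \cite{Komornik-Kong-Li-17} and \cite{Allaart-Kong-2018} and serves purely as background. There is therefore no ``paper's own proof'' to compare your proposal against.

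As a sketch of how the cited references proceed, your outline is broadly on target, but two places are genuinely thin. First, the existence of an internal SFT $X\subseteq\wus_q$ with $h(X)>H(q)-\ep$ is itself a nontrivial lemma: for a general subshift one cannot approximate entropy from inside by SFTs, so one needs the specific combinatorial/transitivity structure of $\wus_q$ (this is where the ``generalized Parry'' block decomposition in the references does real work). Your right-continuity argument at simple bases is also only a promissory note --- ``the extra return word does not raise the entropy'' is precisely the computation that has to be carried out, and in the references it is done by identifying $\wvs_p$ for $p$ in a right-neighbourhood of a simple $q$ with a fixed SFT determined by the finite greedy expansion at $q$. Second, your justification of~(ii) (``squeezed between two fixed subshifts of equal entropy'') is not the right mechanism: what actually happens is that the set-valued map $q\mapsto\us_q$ is \emph{literally constant} on each connected component of $(1,M+1]\setminus\overline{\ub}$ (a consequence of the de~Vries--Komornik description of $\overline{\ub}$; in the notation of this paper, $\overline{\ub}=\vb$), so the entropy is constant there for the trivial reason, not by a squeezing argument.
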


An interval $[p_L, p_R]\subset(1, M+1]$ is called an \emph{entropy plateau} (or simply, a \emph{plateau}) if it is a maximal interval (in the partial order of set inclusion) on which $H$ is constant and positive.
A complete characterization of all entropy plateaus was given by Alcaraz Barrera et al.~\cite{AlcarazBarrera-Baker-Kong-2016} (see also \cite{Alcaraz_Barrera_2014} for the case $M=1$). Equivalently, they described the \emph{bifurcation set}
\begin{equation} \label{eq:bifurcation set}
\bb:=\set{q\in(1, M+1]: H(p)\ne H(q)~\forall p\ne q},
\end{equation}
and showed that $\bb\subset \ub$, $\bb$ is Lebesgue null, and $\dim_H \bb=1$. 
From Theorem \ref{thm:devil-staircase} and the definition of $\bb$ it follows that
\begin{equation} \label{eq:relation-entropy plateaus-bifurcation set}
(1, M+1]\setminus\bb=(1, q_{KL}]\cup\bigcup[p_L, p_R],
\end{equation}
where the union is taken over all plateaus $[p_L, p_R]\subset(q_{KL}, M+1]$ of $H$. We emphasize that the plateaus are pairwise disjoint and therefore the union is countable. 

{Recall that our main objective is to find the local dimensional functions $f$, $f_+$ and $f_-$. The following result is due to the authors and Baker \cite{Allaart-Baker-Kong-17}.

\begin{proposition}[Allaart, Baker and Kong \cite{Allaart-Baker-Kong-17}] \label{prop:local dimension-B}
For any $q\in\bb\setminus\set{M+1}$ we have 
\[
f(q)=f_-(q)=f_+(q)=\dim_H\u_q>0,
\]
and for any $q\in(1,M+1]$ we have $f(q)\leq\dim_H\u_q$.
Furthermore, for $q=M+1$ we have $f(q)=f_-(q)=1$ and $f_+(q)=0$. 
\end{proposition}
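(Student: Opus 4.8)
The plan is to reduce everything to the known dimension formula $\dim_H\u_q = h(\us_q)/\log q$ (valid for $q\in(1,M+1)$) together with the continuity and monotonicity of the entropy function $H$ from Theorem \ref{thm:devil-staircase}. First I would establish the easy upper bound $f(q)\le\dim_H\u_q$ for all $q\in(1,M+1]$. Since $q\mapsto\wus_q$ is non-decreasing in the set-inclusion order, for $p<q$ we have $\us_p\subseteq\us_q$ up to the negligible difference caused by sequences ending in $0^\infty$ or $M^\infty$, so $\dim_H\u_p\le\dim_H\u_q$; more precisely $\dim_H(\ub\cap(q-\delta,q+\delta))\le\sup\{\dim_H\u_p:|p-q|<\delta\}$, and letting $\delta\to0$ and using upper semicontinuity of $p\mapsto\dim_H\u_p$ (which follows from $H$ being continuous, hence $p\mapsto H(p)/\log p$ being continuous) yields $f(q)\le\dim_H\u_q$. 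The case $q=M+1$ is handled directly: $\u_{M+1}$ is the full shift space $\Omega_M$ minus a countable set so $\dim_H\u_{M+1}=1$, and $\ub$ accumulates at $M+1$ from the left (e.g.\ via bases whose quasi-greedy expansions approximate $M^\infty$), while no bases lie to the right, giving $f_-(M+1)=1$ and $f_+(M+1)=0$.

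The substantive content is the reverse inequality $f_\pm(q)\ge\dim_H\u_q$ for $q\in\bb\setminus\{M+1\}$. Here I would exploit the defining property of the bifurcation set: if $q\in\bb$ then $H(p)\ne H(q)$ for every $p\ne q$, so $H$ is \emph{strictly} increasing through $q$ in the sense that for every $\delta>0$ there exist $p_1,p_2\in(q-\delta,q+\delta)$ with $H(p_1)<H(q)<H(p_2)$, and in fact one can choose $p_2\in\bb$ arbitrarily close to $q$ from either side (using that $\bb$ has full dimension and is nowhere locally avoidable near its own points — this needs the structure of the complement \eqref{eq:relation-entropy plateaus-bifurcation set}, namely that the plateaus cannot cluster so as to isolate $q\in\bb$ on one side). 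Given such $p_2>q$ close to $q$ with $p_2\in\bb\subset\ub$, one runs the standard construction: take the (quasi-greedy or univoque) expansion $\alpha(p_2)$, which strictly dominates $\alpha(q)$, and build a large family of sequences in $\wus_r$ for a suitable $r$ slightly above $q$ by freely concatenating admissible blocks of length $n$ drawn from $B_n(\wus_{p_2})$ separated by buffer blocks; projecting via $\pi_r$ and controlling overlaps gives a subset of $\u_r$ — hence a point of $\ub$ near $q$ whose local univoque set has dimension close to $H(p_2)/\log p_2$. Letting $p_2\downarrow q$ and invoking continuity of $H$ gives $f_+(q)\ge H(q)/\log q=\dim_H\u_q$; the argument for $f_-$ is symmetric, and then $f=\max\{f_-,f_+\}=\dim_H\u_q$. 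Positivity $\dim_H\u_q>0$ for $q\in\bb$ follows from $H(q)>0$, which holds since $\bb\subset(q_{KL},M+1]$ by \eqref{eq:relation-entropy plateaus-bifurcation set} and Theorem \ref{thm:devil-staircase}(iii).

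The main obstacle I anticipate is the entropy-approximation step: transferring the positive entropy $H(p_2)$ of the auxiliary space $\wus_{p_2}$ into genuine \emph{univoque} points of $\ub$ that lie in a prescribed small neighbourhood of $q$. One must simultaneously (a) ensure the base $r$ used in the construction stays within $(q-\delta,q+\delta)$, which forces the admissible blocks to be long and the "gluing" to be done delicately so that the resulting sequence is still a valid unique expansion for its base and that base is pinned down near $q$; and (b) guarantee that the family of constructed sequences is large enough that the projected set has Hausdorff dimension at least $H(p_2)/\log r - o(1)$, which requires a mass-distribution or Frostman-type estimate on the overlapping cylinders. I expect this to parallel the analogous constructions in \cite{Allaart-Baker-Kong-17} and \cite{Komornik-Kong-Li-17}, so the proof in the paper likely either cites those or reproduces a streamlined version; the conceptual input that makes it work is precisely that $q\in\bb$ admits bifurcation-set points (equivalently, strict entropy increase) arbitrarily close on the relevant side, which is false for $q$ interior to a plateau and is exactly why the clean formula holds on $\bb$ and not elsewhere.
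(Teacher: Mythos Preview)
The paper does not prove this proposition at all: it is stated as a known result and attributed to \cite{Allaart-Baker-Kong-17}, so there is no proof in the paper to compare against. Your task is therefore to reproduce (or outline) the argument from that reference.

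Your proposal has a genuine conceptual gap. You repeatedly conflate two distinct objects: the set $\ub\subset(1,M+1]$ of \emph{bases} $q$ for which $1$ has a unique expansion, and the set $\u_q\subset I_{M,q}$ of \emph{real numbers} having a unique $q$-expansion for a fixed $q$. The inequality $\dim_H(\ub\cap(q-\delta,q+\delta))\le\sup\{\dim_H\u_p:|p-q|<\delta\}$ does not follow from the monotonicity of $p\mapsto\us_p$; the left side is the dimension of a set of bases, the right side involves dimensions of univoque sets in the real line, and relating them requires the bi-H\"older properties of the map $q\mapsto\alpha(q)$ (or its inverse) between $\ub$ and the symbolic set $\us$ of corresponding unique expansions of $1$. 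Likewise, in your lower-bound construction you say ``projecting via $\pi_r$ \ldots gives a subset of $\u_r$ --- hence a point of $\ub$ near $q$'': building many $x\in\u_r$ does not produce many bases in $\ub$. What the argument in \cite{Allaart-Baker-Kong-17} actually does is construct many \emph{sequences} satisfying the symbolic characterization of $\us$ (Lemma~\ref{lem:characterization of V-U}(i)) that are lexicographically close to $\alpha(q)$; each such sequence then corresponds via $\alpha^{-1}$ to a base in $\ub$ near $q$, and the H\"older estimates on $\alpha^{-1}$ convert the entropy count into a Hausdorff-dimension lower bound for $\ub\cap(q-\delta,q+\delta)$. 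Your outline would be repairable if you systematically replace ``$\pi_r$'' and ``subset of $\u_r$'' by this $\alpha$-based correspondence, but as written the bridge between the two kinds of sets is missing.
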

}

\subsection{Relative bifurcation sets and relative plateaus}
In order to describe the local dimensional function $f$ of $\ub$ we introduce the relative bifurcation sets, which provide finer information about the growth of $q\mapsto\wus_q$ inside entropy plateaus.

\begin{definition} \label{def:admissible-words}
A word $a_1\dots a_m\in\{0,1,\dots,M\}^m$ with $m\geq 2$ is {\em admissible} if
\begin{equation}\label{eq:admissible word}
\overline{a_1\ldots a_{m-i}}\lle a_{i+1}\ldots a_m\prec a_1\ldots a_{m-i}\quad\forall ~1\le i<m.
\end{equation}
When $M\geq 2$, the ``word" $a_1\in\{0,1,\dots,M\}$ is {\em admissible} if $\overline{a_1}\leq a_1<M$.
\end{definition}

For any admissible word $\sv$, there are bases $q_L$ and $q_R$ such that
\[
\alpha(q_L)=\sv^\f, \qquad \alpha(q_R)=\sv^+(\overline{\sv})^\f.
\]
Here, for a word $\mathbf c:=c_1\ldots c_n\in\set{0,1,\ldots, M}^n$ with $c_n<M$ we set $\mathbf c^+:=c_1\ldots c_{n-1}(c_n+1)$. Similarly, for a word $\mathbf c:=c_1\ldots c_n\in\set{0,1,\ldots, M}^n$ with $c_n>0$ we shall write $\mathbf c^-:=c_1\ldots c_{n-1}(c_n-1)$.
We call $[q_L,q_R]$ a {\em basic interval} and say it is {\em generated by} the word $\sv$. By \cite[Lemma 4.8]{AlcarazBarrera-Baker-Kong-2016}, any two basic intervals are either disjoint, or else one contains the other. For any basic interval $I$ generated by an admissible word $\sv$, we define the associated {\em de Vries-Komornik number} $q_c(I)$ by $\alpha(q_c(I))=(\theta_i)$ (cf. \cite{Kong_Li_2015}), where $(\theta_i)$ is given recursively by
\begin{enumerate}[(i)]
\item $\theta_1\dots \theta_m=\sv^+$;
\item $\theta_{2^{k-1}m+1}\dots\theta_{2^k m}=\overline{\theta_1\dots \theta_{2^{k-1}m}}^+$, for $k=1,2,\dots$.
\end{enumerate}
Thus, 
\begin{equation}
\alpha(q_c(I))=\sv^+\overline{\sv}\overline{\sv^+}\sv^+\overline{\sv^+}\sv\sv^+\overline{\sv}\cdots.
\label{eq:q_c}
\end{equation}
Note that $q_c(I)$ lies in the interior of $I$ for each basic interval $I$; this is a direct consequence of Lemma \ref{lem:quasi-greedy expansion-alpha-q} below. Observe also that different basic intervals can have the same associated de Vries-Komornik number.

We now construct a nested tree
\[
\set{J_{\mathbf i}: \mathbf i\in\set{1,2,\ldots}^n; n\ge 1}
\]
of intervals, which we call {\em relative entropy plateaus}, or simply {\em relative plateaus}, as follows. At level $0$, we set $J_\emptyset=[1,M+1]$. Next, at level $1$, we put $J_0=[1,q_{KL}]$ and let $J_1,J_2,\dots$ be an arbitrary enumeration of the entropy plateaus $[p_L,p_R]$ from \eqref{eq:relation-entropy plateaus-bifurcation set}. Note by \cite{AlcarazBarrera-Baker-Kong-2016} that these entropy plateaus are precisely the maximal basic intervals which lie completely to the right of $q_{KL}$. We call $J_0$ a {\em null interval}, since $\ub\cap (1,q_{KL})=\emptyset$. 

From here, we proceed inductively as follows. Let $n\geq 1$, and for each $\mathbf{i}\in\{1,2,\dots\}^n$, assume $J_{\mathbf i}$ has already been defined and is a basic interval $[q_L,q_R]$. Then we set $J_{\mathbf{i}0}=[q_L,q_c(J_{\mathbf i})]$, and let $J_{\mathbf{i}1},J_{\mathbf{i}2},\dots$ be an arbitrary enumeration of the maximal basic intervals inside $[q_c(J_{\mathbf i}),q_R]$. (It is not difficult to see that infinitely many such basic intervals exist.)

Note that for each fixed $n\ge 1$ the relative plateaus $J_{\mathbf i}, \mathbf i\in\set{1,2,\ldots}^n$ are pairwise disjoint. Furthermore, for any word $\mathbf i\in\bigcup_{n=1}^\f\set{1,2,\ldots}^n$ we call $J_{\mathbf i 0}$ a {\em null interval}, because it intersects $\ub$ only in the single point $q_c(J_{\mathbf i})$. We emphasize that any basic interval generated by a word $\sv$ not of the form $\sb\overline{\sb}$ is a relative plateau.

We now define the sets
\begin{equation*}
\cb_\f:= \bigcap_{n=1}^\f\bigcup_{\mathbf i\in\set{1,2,\ldots}^n} J_{\mathbf i}
\end{equation*}
and
\[
\cb_0:=\left\{q_c(J_{\mathbf i}): \mathbf{i}\in\bigcup_{n=0}^\f\set{1,2,\ldots}^n\right\}.
\]
Thus $\cb_\f$ is the set of points which are contained in infinitely many relative plateaus, and $\cb_0$ is the set of all de Vries-Komornik numbers (cf.~\cite{Kong_Li_2015}). The smallest element of $\cb_0$ is the Komornik-Loreti constant $q_{KL}=q_c(J_\emptyset)$. Finally, let
\begin{equation*}
\cb:=\cb_0\cup\cb_\f.
\end{equation*}
For the proof of the following proposition, as well as examples of points in $\cb$, we refer to Section \ref{sec:C}.

\begin{proposition} \label{prop:property of C-infity} \mbox{}
\begin{enumerate}[{\rm(i)}]
\item $\cb\subset\ub$.
\item $\cb$ is uncountable and has no isolated points.
\item $\dim_H\cb=0$. 
\end{enumerate}
\end{proposition}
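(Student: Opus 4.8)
The plan is to treat the three parts in turn, relying throughout on the lexicographic characterisation \eqref{eq:characterization-unique expansion}, the monotonicity of $q\mapsto\al(q)$, and the recursive formula \eqref{eq:q_c} for the de Vries--Komornik numbers.

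For part (i), I would show directly that every $q$ in $\cb_0\cup\cb_\f$ gives a sequence $\al(q)$ satisfying the univoque condition, i.e.\ $\overline{\al(q)}\prec\si^n(\al(q))\prec\al(q)$ for all $n\ge 0$ (strict on both sides), which by \eqref{eq:characterization-unique expansion} places $q$ in $\ub$. For $q=q_c(J_{\mathbf i})\in\cb_0$ this is the content of the known fact that de Vries--Komornik numbers are univoque (cf.~\cite{Kong_Li_2015}); the substitutive/Thue--Morse-like structure exhibited in \eqref{eq:q_c} makes $\al(q_c(I))$ strictly shift-maximal and strictly shift-minimal under bar-reversal, and I would quote or reprove this via the standard block-comparison argument on the blocks $\sv^+,\overline{\sv}$ etc. For $q\in\cb_\f$, the point lies in a strictly decreasing nested sequence of basic intervals $J_{\mathbf i|_n}$ generated by admissible words $\sv^{(n)}$ of strictly increasing length, and $\al(q)$ is the common limit of the $\al(q_L^{(n)})=(\sv^{(n)})^\f$; one checks that admissibility \eqref{eq:admissible word} passes to this limit, yielding $\overline{\al(q)}\preccurlyeq\si^n(\al(q))\preccurlyeq\al(q)$, and the fact that $q$ is not itself a left endpoint $q_L^{(n)}$ (the endpoints are not in $\cb_\f$, as they are eventually periodic while $\cb_\f$ points sit strictly inside infinitely many intervals) upgrades this to strict inequalities.

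For part (ii), uncountability follows from a Cantor-type coding: at each level $n$, any relative plateau $J_{\mathbf i}$ with $\mathbf i\in\{1,2,\dots\}^n$ contains infinitely many children $J_{\mathbf i 1},J_{\mathbf i 2},\dots$ (as asserted in the construction), so choosing an infinite branch $(i_1,i_2,\dots)\in\{1,2,\dots\}^\N$ with each $i_k\ge 1$ yields, by compactness of the nested intervals $J_{i_1}\supset J_{i_1 i_2}\supset\cdots$, a point of $\cb_\f$; distinct branches give distinct points because at the first index where two branches differ the corresponding relative plateaus are disjoint. This gives an injection of $\{1,2,\dots\}^\N$ into $\cb_\f\subset\cb$. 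For "no isolated points": given $q\in\cb$, I would produce points of $\cb_0$ converging to $q$ from both sides. If $q\in\cb_\f$, then $q$ is an interior point of each $J_{\mathbf i|_n}$, and $q_c(J_{\mathbf i|_n})\to q$ along a subsequence from one side while points $q_c(J_{\mathbf i|_{n-1} j})$ for large $j$ approach $q_R^{(n-1)}$... more carefully, since $q$ is strictly inside $J_{\mathbf i|_n}=[q_L^{(n)},q_R^{(n)}]$ with $\diam J_{\mathbf i|_n}\to 0$, both endpoints converge to $q$, and each $J_{\mathbf i|_n}$ contains the de Vries--Komornik number $q_c(J_{\mathbf i|_n})$ as well as (inside its right half) further basic intervals hence further de Vries--Komornik numbers on both sides of $q$ — so $q$ is a two-sided limit of $\cb_0$. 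If $q=q_c(J_{\mathbf i})\in\cb_0$, then approaching from the left one uses that $J_{\mathbf i 0}=[q_L,q_c(J_{\mathbf i})]$ itself has a nested-tree structure accumulating at its right endpoint $q$ (the right half of each basic interval contains infinitely many basic intervals whose left endpoints approach the right endpoint), and from the right one uses the analogous accumulation of maximal basic intervals inside $[q_c(J_{\mathbf i}),q_R]$ onto $q_c(J_{\mathbf i})$; in each case the endpoints of those basic intervals are themselves left/right endpoints of relative plateaus whose null children furnish nearby de Vries--Komornik numbers.

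For part (iii), the idea is that $\cb_\f$ is a limsup-type set cut out by a tree in which the "escape" option (the null child $J_{\mathbf i 0}$) is omitted, and the surviving basic intervals shrink fast enough. Concretely, for a basic interval $I$ generated by $\sv$ of length $m$, the diameter of $I$ is comparable to the gap between the bases with quasi-greedy expansions $\sv^\f$ and $\sv^+(\overline{\sv})^\f$, which is $O(q^{-m})$ with a uniform $q<M+1$; and $q_c(I)$ sits a definite fraction of the way into $I$, so each maximal basic subinterval of the right half $[q_c(I),q_R]$ is generated by a word strictly longer than $m$. Hence along any branch of the tree the generating words have strictly increasing lengths $m_1<m_2<\cdots$, forcing $\diam J_{\mathbf i|_n}\le C(M+1)^{-m_n}\to 0$ with $m_n\ge n$. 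To get dimension zero I would fix $s>0$ and build an efficient cover of $\cb_\f$: for each $n$, $\cb_\f\subset\bigcup_{\mathbf i\in\{1,2,\dots\}^n}J_{\mathbf i}$, and while the number of level-$n$ intervals is infinite, they are pairwise disjoint subintervals of $[1,M+1]$, so $\sum_{\mathbf i}\diam J_{\mathbf i}\le M$; combined with $\diam J_{\mathbf i}\le C(M+1)^{-n}$ this gives $\sum_{\mathbf i}(\diam J_{\mathbf i})^s\le C^{s-1}(M+1)^{-(s-1)n}\cdot M^{\,?}$ — one bounds $\sum_{\mathbf i}(\diam J_{\mathbf i})^s=\sum_{\mathbf i}(\diam J_{\mathbf i})(\diam J_{\mathbf i})^{s-1}\le M\cdot(C(M+1)^{-n})^{s-1}\to 0$ as $n\to\infty$ for every $s\in(0,1)$, so $\mathcal H^s(\cb_\f)=0$ and $\dim_H\cb_\f=0$. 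Since $\cb_0$ is countable, $\dim_H\cb=\max\{\dim_H\cb_0,\dim_H\cb_\f\}=0$.

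The main obstacle I anticipate is part (ii)'s "no isolated points" claim, and more specifically the bookkeeping needed to show that de Vries--Komornik numbers (or relative-plateau endpoints) accumulate at every point of $\cb$ from \emph{both} sides — this requires knowing precisely how maximal basic intervals are distributed inside the right half $[q_c(I),q_R]$ of a basic interval, in particular that their left endpoints accumulate at $q_c(I)$ and their right endpoints accumulate at $q_R$. This is presumably where the detailed combinatorics of admissible words and the results of \cite{AlcarazBarrera-Baker-Kong-2016} are needed, and it is the step I would develop most carefully (and likely where the section's lemmas on $\al(q)$ do the real work).
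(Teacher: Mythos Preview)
Your approach to (i) and (ii) is essentially the paper's, though the paper is more economical. For (i) it argues exactly as you do: cite \cite{Kong_Li_2015} for $\cb_0\subset\ub$, and for $q\in\cb_\f$ use that the prefixes $\al_1(q)\ldots\al_{m_k}(q)^-$ are admissible for infinitely many $m_k$ to deduce $q\in\overline{\ub}$, then rule out $q\in\overline{\ub}\setminus\ub$ by noting that this difference consists only of left endpoints of relative plateaus. For (ii), the paper's treatment of ``no isolated points'' is a single line: any right neighbourhood of a de Vries--Komornik number contains infinitely many relative plateaus, hence infinitely many further de Vries--Komornik numbers. You do not need two-sided accumulation, so the ``main obstacle'' you flag is not really there.

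There is, however, a genuine error in your argument for (iii). Your covering estimate
\[
\sum_{\mathbf i}(\diam J_{\mathbf i})^s=\sum_{\mathbf i}(\diam J_{\mathbf i})(\diam J_{\mathbf i})^{s-1}\le M\cdot\bigl(C(M+1)^{-n}\bigr)^{s-1}
\]
fails for $s\in(0,1)$: since $s-1<0$, an \emph{upper} bound on $\diam J_{\mathbf i}$ yields a \emph{lower} bound on $(\diam J_{\mathbf i})^{s-1}$, so the inequality goes the wrong way; and even granting it, the right-hand side tends to $\infty$ as $n\to\infty$. Disjointness plus a uniform small diameter at each level simply does not force Hausdorff dimension zero when there are infinitely many intervals per level. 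The paper avoids any covering argument and instead invokes the formula
\[
\dim_H\bigl(\overline{\ub}\cap J_{\mathbf i}\bigr)=\frac{\log 2}{m\log q_R}
\]
from \cite{Allaart-Baker-Kong-17} (stated as \eqref{eq:dimension-local-u}), where $m$ is the length of the generating word of $J_{\mathbf i}$. Since $m\ge n$ for $\mathbf i\in\{1,2,\dots\}^n$ and $q_R>q_{KL}>1$, countable stability of Hausdorff dimension gives
\[
\dim_H\cb_\f\le\sup_{\mathbf i\in\{1,2,\dots\}^n}\dim_H\bigl(\overline{\ub}\cap J_{\mathbf i}\bigr)\le\frac{\log 2}{n\log q_{KL}}\longrightarrow 0.
\]
The essential point you are missing is that one needs control on $\dim_H(\overline{\ub}\cap J_{\mathbf i})$ itself---which is where the real work was done in \cite{Allaart-Baker-Kong-17}---not merely on $\diam J_{\mathbf i}$.
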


\subsection{Main results}
Let $J=[q_L, q_R]$ be a relative plateau with $J\neq[1,M+1]$. Then there is an admissible word $\sv=a_1\ldots a_m$ such that $\al(q_L)=\sv^\f$ and $\al(q_R)=\sv^+(\overline{\sv})^\f$. In particular, $\alpha(q)$ begins with the prefix $\sv^+$ for each $q\in (q_L,q_R]$.
Let
\begin{equation}
\wus_q(J):=\set{(x_i)\in\wus_q: x_1\ldots x_m=\al_1(q)\ldots \al_m(q)=a_1\dots a_m^+}, \qquad q\in (q_L,q_R].
\label{eq:U_q(J)}
\end{equation}
For the special case when $J=J_\emptyset=[1,M+1]$, we set $\wus_q(J):=\wus_q$.
We are now ready to give a characterization of the local dimensional functions $f$, $f_-$ and $f_+$.

\begin{main}\label{main1}\mbox{}

\begin{enumerate}[{\rm(i)}]
\item Let $q\in\overline{\ub}$. Then
\[ f(q)=0\quad\Longleftrightarrow\quad f_-(q)=0\quad\Longleftrightarrow\quad
q\in\cb.\]

\item Let $q\in \overline{\ub} \setminus\cb$. Then  
\[
f_-(q)=\frac{h(\wus_q(J))}{\log q}>0,
\]
where $J=[q_L, q_R]$ is the smallest relative plateau such that  $q\in(q_L, q_R]$. Furthermore,
\[
f_+(q)=\begin{cases}
0 & \textrm{if}\ q\in\overline{\ub}\setminus\ub,\\
\frac{h(\wus_q(J))}{\log q}>0 & \textrm{if}\ q\in\ub \setminus\cb,
\end{cases}
\]
where $J=[q_L, q_R]$ is the smallest relative plateau such that $q\in(q_L, q_R)$. 
As a consequence,
\[
f(q)=\frac{h(\wus_q(J))}{\log q}>0,
\]
where $J=[q_L, q_R]$ is the smallest relative plateau such that $q\in(q_L, q_R)$.
\end{enumerate}
\end{main}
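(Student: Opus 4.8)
The plan is to deduce Theorem~\ref{main1} from the following statement, which I will call ($\star$): \emph{for every relative plateau $J$ and every $q\in\ub$ with $J_-(q)=J_+(q)=J$ (i.e.\ $q$ in the relative bifurcation set of $J$), one has $f(q)=f_-(q)=f_+(q)=h(\widetilde{\mathbf U}_q(J))/\log q>0$}; here $J_-(q)$ (resp.\ $J_+(q)$) is the smallest relative plateau $[q_L,q_R]$ with $q\in(q_L,q_R]$ (resp.\ $q\in(q_L,q_R)$), and the endpoint $q=M+1$ is covered by Proposition~\ref{prop:local dimension-B}. For $q\in\overline\ub\setminus\cb$ one checks that $q$ lies strictly between the de Vries--Komornik number of $J_\pm(q)$ and its right endpoint (since $q\notin\cb_0$ and a neighbourhood of $q$ meets $\ub$), and that the forced‑prefix conditions nest down the tree, so $\widetilde{\mathbf U}_q(J_-(q))\subseteq\widetilde{\mathbf U}_q(J_+(q))$ and hence $h(\widetilde{\mathbf U}_q(J_-(q)))\le h(\widetilde{\mathbf U}_q(J_+(q)))$. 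Granting ($\star$), part~(ii) follows by a sandwich: the lower bounds come from approaching $q$ by relative‑bifurcation points on the appropriate side and invoking ($\star$) there together with continuity of the relative entropy function (see below), the upper bounds come from the covering estimate (below), and the exceptional cases $f_+(q)=0$ are precisely those in which $q$ is the left endpoint of a null interval, so that $\ub\cap(q,q+\delta)=\emptyset$ for small $\delta$. Part~(i) then follows since ($\star$) forces $f>0$ off $\cb$, while a separate estimate (below) gives $f=f_-=0$ on $\cb$. Note that for $J=J_\emptyset=[1,M+1]$, where $\widetilde{\mathbf U}_q(J_\emptyset)=\widetilde{\mathbf U}_q$, statement ($\star$) is exactly Proposition~\ref{prop:local dimension-B} restricted to $\bb$; general $J$ captures the finer behaviour of $\ub$ inside entropy plateaus. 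Finally, the sets $\{q\in\ub:J_-(q)=J_+(q)=J\}$, as $J$ ranges over relative plateaus, are the relative bifurcation sets of the title: by construction of the tree they are pairwise disjoint and, with $\cb_0$, partition $\ub$.

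For the lower bound in ($\star$), I would transplant the ``Cantor set of bases'' argument of \cite{Allaart-Baker-Kong-17} into $J=[q_L,q_R]$, generated by an admissible word $\mathbf a=a_1\dots a_m$; by \eqref{eq:q_c} we have $\alpha(q_c(J))=\mathbf a^+\overline{\mathbf a}\,\overline{\mathbf a^+}\mathbf a^+\cdots$. Fix $\varepsilon>0$. Since $q$ is the left endpoint of no sub‑relative‑plateau of $J$, $\alpha(q)$ lies strictly above $\alpha(q_c(J))$, and --- using that $p\mapsto h(\widetilde{\mathbf U}_p(J))$ is a ``relative Devil's staircase'' with no constancy interval at $q$ (see the last paragraph) --- one selects a long admissible word $\mathbf w$ beginning with $\mathbf a^+$, with $\mathbf w^+(\overline{\mathbf w})^\f\prec\alpha(q)$, such that the subshift $X_{\mathbf w}$ of all concatenations of $\mathbf w$ with its admissible followers whose shifts remain strictly between $\overline{\mathbf w^+(\overline{\mathbf w})^\f}$ and $\mathbf w^+(\overline{\mathbf w})^\f$ satisfies $h(X_{\mathbf w})\ge h(\widetilde{\mathbf U}_q(J))-\varepsilon$. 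Each sequence in $X_{\mathbf w}$ is the quasi‑greedy expansion of $1$ in a base in $\ub$ at controlled distance from $q$ (placing $\mathbf w$ to the left or to the right of $\alpha(q)$ to obtain bases below or above $q$, and using \eqref{eq:characterization-unique expansion} to certify that these bases are genuinely univoque), and the resulting injection of $X_{\mathbf w}$ into $\ub\cap(q-\delta,q+\delta)$ contracts length‑$n$ cylinders by a factor comparable to $q^{-n}$. A mass‑distribution estimate then yields $\dim_H(\ub\cap(q-\delta,q))\ge(h(\widetilde{\mathbf U}_q(J))-\varepsilon)/\log q$ and similarly on the right, and $\varepsilon\to0$ gives ``$\ge$'' in ($\star$). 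Positivity $h(\widetilde{\mathbf U}_q(J))>0$ drops out of the same construction, since $\alpha(q)\succ\alpha(q_c(J))$ forces $\widetilde{\mathbf U}_q(J)$ to contain a subsystem conjugate to a full shift on two admissible words.

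For the upper bounds --- the ``$\le$'' in ($\star$) and those used above --- fix small $\delta>0$ and cover $\ub\cap(q-\delta,q)$ and $\ub\cap(q,q+\delta)$ by basic and cylinder intervals. Each base $p$ in these ranges either lies in the interior of a sub‑relative‑plateau $J'$ of $J=J_\pm(q)$, whose trace on $\ub$ contributes --- by induction on depth in the tree, via ($\star$) inside $J'$, monotonicity of $\widetilde{\mathbf U}_\cdot(\cdot)$ in both arguments, and continuity of the relative entropy --- Hausdorff dimension at most $h(\widetilde{\mathbf U}_q(J))/\log q$; or else $\alpha(p)$ begins with $\mathbf a^+$ and its length‑$n$ prefix occurs in some sequence of $\widetilde{\mathbf U}_{q'}(J)$ for a fixed $q'$ slightly larger than $q$, so the $n$‑th generation of the cover has at most $\#B_n(\widetilde{\mathbf U}_{q'}(J))$ such intervals, each of length $O(q^{-n})$. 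Letting $n\to\f$, then $q'\downarrow q$ (using continuity of $q'\mapsto h(\widetilde{\mathbf U}_{q'}(J))$ at $q$), then $\delta\to0$, gives $f_\pm(q)\le h(\widetilde{\mathbf U}_q(J))/\log q$. For $q\in\cb$ the same estimate forces vanishing: if $q=q_c(J_{\mathbf i})\in\cb_0$, then $(q_L(J_{\mathbf i}),q)$ is a null interval, whence $f_-(q)=0$, while $\ub\cap(q,q+\delta)$ sits in the sub‑plateaus $J_{\mathbf i k}$ ($k\ge1$) accumulating at $q$, whose generating words lengthen without bound while their relative entropies stay bounded, so $f_+(q)=0$; if $q\in\cb_\f$, then $q$ lies in a chain $J^{(1)}\supsetneq J^{(2)}\supsetneq\cdots$ of relative plateaus with $\diam J^{(n)}\to0$ and generating words $\mathbf a^{(n)}$ of length $m_n\to\f$, and on a neighbourhood of $q$ inside $J^{(n)}$ the estimate reads $\dim_H(\ub\cap(q-\delta_n,q+\delta_n))\le h(\widetilde{\mathbf U}_q(J^{(n)}))/\log q$, which is $O(1/m_n)\to0$ because sequences in $\widetilde{\mathbf U}_q(J^{(n)})$ begin with $(\mathbf a^{(n)})^+$ and, lying inside an entropy plateau, decompose into blocks of length at least $m_n$ over a bounded alphabet. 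Hence $f=f_-=0$ on $\cb$, finishing part~(i).

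The main obstacle is supplying the ``relative Devil's staircase'' properties used repeatedly above: that $p\mapsto h(\widetilde{\mathbf U}_p(J))$ is non‑decreasing and continuous on $(q_c(J),q_R]$, that its constancy intervals are exactly the sub‑relative‑plateaus of $J$ --- so that it increases strictly through each point of the relative bifurcation set of $J$ --- and that it is monotone under passage to nested plateaus. Establishing this amounts to re‑running, for the restricted symbolic systems $\widetilde{\mathbf U}_p(J)$, the analysis behind Theorem~\ref{thm:devil-staircase} together with the classification of basic intervals inside a basic interval from \cite{AlcarazBarrera-Baker-Kong-2016}, and this is where most of the technical work lies. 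A secondary but delicate point is the bookkeeping with the inequalities \eqref{eq:characterization-unique expansion} near the forced prefix $\mathbf a^+$, needed to certify that the bases produced by the lower‑bound construction lie in $\ub$ itself (not merely in $\overline\ub$) and on the correct side of $q$.
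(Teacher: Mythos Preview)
Your plan is workable but takes a genuinely different route from the paper, and the divergence is precisely at what you flag as the ``main obstacle''. You propose to re-run inside each relative plateau $J$ the Cantor-set-of-bases construction and the subshift-of-finite-type entropy analysis of \cite{Allaart-Baker-Kong-17,AlcarazBarrera-Baker-Kong-2016}, thereby proving ($\star$) and the relative Devil's staircase from scratch. The paper instead constructs, for each $J=[q_L,q_R]$ generated by $\mathbf a=a_1\dots a_m$, an explicit block map $\Phi_J$ (via the substitution $\mathbf a^+,\overline{\mathbf a}\mapsto 1$ and $\mathbf a,\overline{\mathbf a^+}\mapsto 0$) which induces a strictly increasing bi-H\"older homeomorphism $\hat\Phi_J:\mathscr V\cap(q_L,q_R]\to\mathscr V^*$ onto the corresponding set for the alphabet $\{0,1\}$. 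This map carries $\mathscr U\cap J$ onto $\mathscr U^*$, carries $\mathscr B(J)$ onto $\mathscr B^*$, and satisfies $h(\widetilde{\mathbf U}_q(J))=h(\widetilde{\mathbf U}_{\hat q}^*)/m$ with $\hat q=\hat\Phi_J(q)$; the H\"older exponents $\log\hat q/(m\log q)$ then give $f_\pm(q)=\frac{\log\hat q}{m\log q}\,f_\pm^*(\hat q)$ directly. Thus both ($\star$) and the relative Devil's staircase become immediate corollaries of Proposition~\ref{prop:local dimension-B} (in its one-sided $\mathscr B_L^*,\mathscr B_R^*$ form) and the Glendinning--Sidorov/Komornik--Kong--Li theorem for $M=1$, with no new lower-bound construction or continuity analysis needed. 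What your direct approach would buy is a self-contained argument independent of the special role of $\{0,1\}$; what the paper's buys is that all the hard work has already been done once in the references, so the proof of Theorem~\ref{main1} is a page long. One place where your sketch is under-specified and where the paper's route pays off: not every sequence in your $X_{\mathbf w}$ is a quasi-greedy expansion of $1$ (these must satisfy $\sigma^n(\alpha)\preccurlyeq\alpha$ for all $n$), so embedding a large-entropy piece of $X_{\mathbf w}$ into $\mathscr U$ near $q$ requires the delicate selection of \cite[Prop.~3.17]{AlcarazBarrera-Baker-Kong-2016}; the paper sidesteps this entirely since $\hat\Phi_J$ already identifies $\mathscr U\cap J$ with $\mathscr U^*$.
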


\begin{remark} 
Note the asymmetry between $f_-$ and $f_+$. This is caused by the very different roles played by the left and right endpoints $q_L$ and $q_R$ of a relative plateau. On the one hand, we have $f(q_L)=f_-(q_L)>0$ while $f_+(q_L)=0$. On the other hand, suppose $[q_L,q_R]=J$, and let $I$ be the parent interval of $J$, that is, the relative plateau one level above $J$ that contains $J$. Then
\[
f_-(q_R)=\frac{h(\wus_{q_R}(J))}{\log q_R}>0 \qquad\mbox{and} \qquad f_+(q_R)=\frac{h(\wus_{q_R}(I))}{\log q_R}>0,
\]
and since $\wus_{q_R}(J)\subset \wus_{q_R}(I)$, we have $f_-(q_R)\leq f_+(q_R)$ so $f(q_R)=f_+(q_R)$. In fact, the inequality between $f_-(q_R)$ and $f_+(q_R)$ is almost always strict, with just one possible exception; see Example \ref{ex:endpoints} below for more details.
\end{remark}

Theorem \ref{main1} suggests a closer investigation of the sets $\wus_q(J)$. Our next result gives a detailed description. 

Recall the definition \eqref{eq:q_c} of $q_c(J)$, and let $q_G(J)$ and $q_F(J)$ be the bases in $(q_L, q_R)$ with
\[
\al(q_G(J))=(\sv^+\overline{\sv^+})^\f, \qquad \al(q_F(J))=(\sv^+\overline{\sv}\;\overline{\sv^+}\sv)^\f.
\]
Then $q_G(J)<q_F(J)<q_c(J)$.

\begin{main} \label{main2}
Let $J=[q_L, q_R]$ be a relative plateau generated by the admissible word $\sv$. Then the entropy function 
\[H_J: q\mapsto h(\wus_q(J))\] is a Devil's staircase on $(q_L, q_R]$, i.e., $H_J$ is continuous, non-decreasing and locally constant almost everywhere on $(q_L, q_R]$. 
Furthermore, the set $\wus_q(J)$ has the following structure:
\begin{enumerate}[{\rm(i)}]
\item If $q_L<q\le q_G(J)$, then $\wus_q(J)=\emptyset$.

\item If $q_G(J)<q\le q_F(J)$, then $\wus_q(J)=\big\{\big(\sv^+\overline{\sv^+}\big)^\f\big\}$.

\item If $q_F(J)<q<q_c(J)$, then $\wus_q(J)$ is countably infinite.

\item If $q=q_c(J)$, then $\wus_q(J)$ is uncountable but $H_J(q)=0$.

\item If $q_c(J)<q\le q_R$, then $H_J(q)>0$. 
\end{enumerate}
\end{main}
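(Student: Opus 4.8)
\textbf{Proof proposal for Theorem \ref{main2}.}

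The plan is to transfer the problem from the base $q$ to the sequence $\alpha(q)$ and to study $\wus_q(J)$ through the lexicographic characterization \eqref{eq:def-widetilde-uq} together with the prefix constraint \eqref{eq:U_q(J)}. Since $q\mapsto\alpha(q)$ is strictly increasing (Lemma \ref{lem:quasi-greedy expansion-alpha-q}), on the plateau $J=[q_L,q_R]$ we have $\alpha(q_L)=\sv^\f$ and $\alpha(q_R)=\sv^+(\overline\sv)^\f$, so for $q\in(q_L,q_R]$ the quasi-greedy expansion $\alpha(q)$ begins with $\sv^+$ and lies lexicographically between these two. I would first observe that membership of a sequence $(x_i)$ in $\wus_q(J)$ forces a rigid block structure: $x_1\ldots x_m=\sv^+$, and then the condition $\overline{\alpha(q)}\prec\sigma^n(x_i)\prec\alpha(q)$ for all $n$, combined with the fact that $\alpha(q)$ starts with $\sv^+$ and $\overline{\alpha(q)}$ starts with $\overline{\sv^+}=\overline\sv{}^-\cdot$ (more precisely $\overline{\sv^+}$ as a word), means every occurrence of a ``high'' block must be followed by a controlled continuation. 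The key reduction is to recode $\wus_q(J)$ as a subshift over a finite alphabet of $\sv$-blocks (essentially $\{\sv^+,\overline\sv,\sv,\overline{\sv^+},\dots\}$ of bounded length), which makes $H_J$ manifestly an entropy function of a one-parameter family of subshifts; monotonicity of $q\mapsto\wus_q$ (hence of $q\mapsto\wus_q(J)$) is immediate from \eqref{eq:def-widetilde-uq}, and the Devil's-staircase properties (continuity, local constancy a.e.) follow exactly as in the proof of Theorem \ref{thm:devil-staircase} in \cite{Komornik-Kong-Li-17,Allaart-Kong-2018}, since the combinatorial mechanism is the same, now ``relativized'' to the plateau.

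For the structural dichotomy (i)--(v), I would compute the thresholds directly. For (i): if $q\le q_G(J)$, then $\alpha(q)\preccurlyeq(\sv^+\overline{\sv^+})^\f$, and I claim no sequence can simultaneously start with $\sv^+$ and satisfy the upper and lower bounds at every shift — the point is that after the prefix $\sv^+$, the lower bound $\sigma^m(x_i)\succ\overline{\alpha(q)}$ forces $x_{m+1}\ldots x_{2m}\succcurlyeq\overline{\sv^+}$, while the upper bound forces $x_{m+1}\ldots x_{2m}\preccurlyeq\sv^+$ but in fact, because $\alpha(q)$ is so close to $(\sv^+\overline{\sv^+})^\f$, one shows the only candidate is $(\sv^+\overline{\sv^+})^\f$ itself, which fails the strict lower bound when $q\le q_G(J)$; hence the set is empty. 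For (ii): when $q_G(J)<q\le q_F(J)$, the same squeeze shows $(\sv^+\overline{\sv^+})^\f$ is now admissible (the bounds become strict) but any deviation from the periodic pattern $\sv^+\overline{\sv^+}$ is blocked by $\alpha(q)\preccurlyeq(\sv^+\overline\sv\,\overline{\sv^+}\sv)^\f$ — deviating would require a block exceeding $\overline{\sv}$ somewhere, which violates the upper bound at the corresponding shift — so the set is the single periodic point. For (iii): once $q>q_F(J)$, a second pattern becomes available and one gets a strictly increasing union of finite sets of eventually-periodic sequences as $q$ ranges up toward $q_c(J)$; countability follows because below $q_c(J)$ every admissible sequence must eventually lock into one of finitely many periodic tails (the recursive de Vries–Komornik construction \eqref{eq:q_c} exactly marks where new periodic patterns of each period enter). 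For (iv): at $q=q_c(J)$, the expansion $\alpha(q_c(J))$ is the ``Thue–Morse-like'' sequence \eqref{eq:q_c} built by the doubling rule, and the associated subshift is (up to recoding) the Thue–Morse subshift, which is uncountable with zero entropy — this is the relativized analogue of the classical fact about the Komornik–Loreti constant. For (v): when $q>q_c(J)$, one exhibits two incomparable admissible blocks of a common length, generating a full shift on two symbols inside $\wus_q(J)$ after a fixed prefix, giving positive entropy; alternatively this follows from the characterization of $\bb$ in \cite{AlcarazBarrera-Baker-Kong-2016} applied inside $J$, since $q_c(J)$ plays the role of the Komornik–Loreti constant for the relativized bifurcation set.

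The main obstacle I anticipate is establishing (iii) and the sharpness of the threshold $q_c(J)$ in (iv)--(v): one must show both that $H_J(q_c(J))=0$ (zero entropy at the relativized Komornik–Loreti point) \emph{and} that $H_J(q)>0$ for every $q>q_c(J)$, i.e.\ that entropy ``turns on'' precisely at $q_c(J)$ with no plateau at height zero to its right. This requires a careful lexicographic analysis of the self-similar structure of \eqref{eq:q_c} — showing that the doubling construction is exactly critical — mirroring the delicate arguments of Komornik–Loreti \cite{Komornik-Loreti-1998} and Glendinning–Sidorov \cite{Glendinning_Sidorov_2001}, but carried out relative to the block alphabet determined by $\sv$ rather than the original digit alphabet. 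A secondary technical point is verifying that the recoded subshift over $\sv$-blocks is genuinely a subshift of finite type (or at least sofic) for $q$ in the relativized bifurcation set, so that the Devil's-staircase transfer goes through verbatim; I would handle this by checking that the admissibility conditions \eqref{eq:def-widetilde-uq} restricted to sequences with prefix $\sv^+$ only involve comparisons of blocks of length at most $2m$ (or a bounded multiple of $m$), which is forced by $\alpha(q)$ lying strictly below $\alpha(q_R)=\sv^+(\overline\sv)^\f$.
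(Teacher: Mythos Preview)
Your strategy of recoding $\wus_q(J)$ over a block alphabet built from $\sv$ is exactly the right idea, and is the same conceptual move the paper makes. The difference is in execution: you propose to \emph{redo} the Glendinning--Sidorov and Komornik--Kong--Li arguments in the relativized setting, whereas the paper constructs once and for all an explicit order-preserving bijection $\Phi_J$ from the block-coded sequences $X(J)$ to $X^*=\{(x_i)\in\{0,1\}^\N:x_1=1\}$, via the $1$-block map $\phi(\sv^+)=\phi(\overline\sv)=1$, $\phi(\sv)=\phi(\overline{\sv^+})=0$, together with an induced homeomorphism $\hat\Phi_J:\vb\cap(q_L,q_R]\to\vb^*$ on bases. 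The crucial lemma (the paper's Lemma~\ref{lem:description-Phi-J}) is that the two-sided constraints $\overline{\alpha(q)}\prec\sigma^n((x_i))\prec\alpha(q)$ for \emph{all} $n\ge0$ are equivalent to the same constraints at the block-aligned positions $n\in m\N$ only, which in turn are equivalent to the standard $\{0,1\}$-constraints for $\hat q:=\hat\Phi_J(q)$. This gives $\Phi_J(\wus_q(J))=\{(x_i)\in\wus_{\hat q}^*:x_1=1\}$ and $h(\wus_q(J))=h(\wus_{\hat q}^*)/m$, so (i)--(v) and the Devil's staircase are read off directly from the classical $M=1$ results (Proposition~\ref{prop:unique expansion-two digits case}); the continuity of $H_J$ comes from the H\"older continuity of $\hat\Phi_J$.

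Your route would work, but two of your stated expectations need correction. First, the claim that admissibility ``only involve[s] comparisons of blocks of length at most $2m$'' is false: the constraints are at every shift and of arbitrary length; what is true (and is the content of the paper's reduction) is that the constraints at non-aligned shifts $n\notin m\N$ are automatically implied by those at aligned shifts, thanks to the admissibility inequalities \eqref{eq:admissible word} for $\sv$. Second, for (iii) and the sharpness at $q_c(J)$ in (iv)--(v), your plan to reprove the Komornik--Loreti/Glendinning--Sidorov criticality ``relative to the block alphabet'' is a genuine reproof of a nontrivial theorem; the paper avoids this entirely by noting that $\hat\Phi_J$ sends $q_G(J),q_F(J),q_c(J)$ to the golden ratio, the tribonacci-type constant, and $q_{KL}^*$ respectively, so nothing needs to be redone. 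What your approach would buy is self-containment (no black-box appeal to the $M=1$ theory); what the paper's bijection buys is brevity and a single transfer lemma that also drives Theorems~\ref{main1}, \ref{main-b}, and~\ref{main4}.
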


\begin{remark}
Theorem \ref{main2} can be viewed as a generalization of Theorem \ref{thm:devil-staircase} and the classical result of Glendinning and Sidorov \cite{Glendinning_Sidorov_2001} for the set $\u_q$ with $q\in(1,2]$ and alphabet $\{0,1\}$ (see Proposition \ref{prop:unique expansion-two digits case} below).
\end{remark}

Note that, while the function $H: q\mapsto h(\wus_q)$ is constant on each relative plateau $J$, the set-valued map $F: q\mapsto \wus_q$ is {\em not} constant on $J$. Since $F$ is non-decreasing, it is natural to investigate the variation of the map $q\mapsto \dim_H(\wus_q\setminus\wus_{q_L})$ on $J=[q_L,q_R]$, where the Hausdorff dimension is well defined by equipping the symbolic space $\Omega_M$ with the metric $\rho$ defined by
\begin{equation}
\rho((x_i), (y_i))={2^{-\inf\set{i\ge 0: x_{i+1}\ne y_{i+1}}}}.
\label{eq:rho-metric}
\end{equation}
As an application of Theorem \ref{main2} we have the following.

\begin{corollary} \label{cor:variation-in-plateau}
Let $J=[q_L, q_R]$ be a relative plateau. Then the function
\[
D_J: J\mapsto [0,\infty), \quad q\mapsto \dim_H(\wus_q\setminus\wus_{q_L})
\]
is a Devil's staircase on $J$. Furthermore, $D_J(q)=0$ if and only if $q\le q_c(J)$. 
\end{corollary}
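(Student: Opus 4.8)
The plan is to reduce the corollary to Theorem~\ref{main2} by showing that $D_J$ agrees with $H_J/\log2$ up to the behaviour at the single endpoint $q_L$. I assume $J\ne[1,M+1]$ (the case $J=[1,M+1]$ has $\wus_q(J)=\wus_q$ and $\wus_{q_L}=\emptyset$, and reduces to Theorem~\ref{thm:devil-staircase}) and write $\sv=a_1\dots a_m$ for the admissible word generating $J$, so that $\al(q_L)=\sv^\f$ while $\al(q)$ begins with the prefix $\sv^+$ for all $q\in(q_L,q_R]$. The target is the identity
\[
\dim_H\big(\wus_q\setminus\wus_{q_L}\big)=\dim_H\wus_q(J)=\frac{h(\wus_q(J))}{\log 2}=\frac{H_J(q)}{\log 2}\qquad(q_L<q\le q_R),
\]
together with the trivial fact $D_J(q_L)=\dim_H\emptyset=0$.

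First I would establish a combinatorial description of $\wus_q\setminus\wus_{q_L}$. On the one hand, $\wus_q(J)\subseteq\wus_q\setminus\wus_{q_L}$, since a sequence beginning with $\sv^+$ exceeds $\al(q_L)=\sv^\f$ and is therefore not in $\wus_{q_L}$ by \eqref{eq:def-widetilde-uq}. On the other hand, if $(x_i)\in\wus_q\setminus\wus_{q_L}$, then by \eqref{eq:def-widetilde-uq} some shift $\si^n((x_i))$ is $\succcurlyeq\sv^\f$ or $\preccurlyeq\overline{\sv^\f}$. In the first case, either $\si^n((x_i))=\sv^\f$, or $\si^n((x_i))\succ\sv^\f$ and then the first length-$m$ block after position $n$ that differs from $\sv$ is $\succ\sv$ and $\preccurlyeq\sv^+$ — the latter because $\al(q)$ begins with $\sv^+$, so every length-$m$ block occurring in $\wus_q$ is $\preccurlyeq\sv^+$ — hence equals $\sv^+$, which exhibits a shift of $(x_i)$ lying in $\wus_q(J)$ (using the shift-invariance of $\wus_q$). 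The case $\si^n((x_i))\preccurlyeq\overline{\sv^\f}$ is handled symmetrically, using that $\wus_q$ is invariant under $(x_i)\mapsto\overline{(x_i)}$, and produces either a shift equal to $\overline{\sv^\f}$ or a shift lying in $\overline{\wus_q(J)}$. Altogether,
\[
\wus_q(J)\ \subseteq\ \wus_q\setminus\wus_{q_L}\ \subseteq\ \mathcal N\ \cup\ \bigcup_{n\ge 0}\ \bigcup_{|w|=n}\ [w]\cap\si^{-n}\big(\wus_q(J)\cup\overline{\wus_q(J)}\big),
\]
where $\mathcal N$ is the countable set of sequences having some shift equal to $\sv^\f$ or $\overline{\sv^\f}$. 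Since $\si^n$ restricted to a cylinder $[w]$ is a similarity of ratio $2^n$ for the metric $\rho$, the reflection map is a $\rho$-isometry, and Hausdorff dimension is countably stable, this yields $\dim_H(\wus_q\setminus\wus_{q_L})=\dim_H\wus_q(J)$.

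The remaining identity $\dim_H\wus_q(J)=h(\wus_q(J))/\log 2$ I expect to be the technical core. The upper bound is the standard box-counting estimate $\dim_H\wus_q(J)\le\overline{\dim}_B\wus_q(J)=\frac1{\log2}\limsup_{n}\frac1n\log\#B_n(\wus_q(J))$, so it suffices to show that this $\limsup$ equals $h(\wus_q(J))=\liminf_{n}\frac1n\log\#B_n(\wus_q(J))$, i.e.\ that $\lim_n\frac1n\log\#B_n(\wus_q(J))$ exists; I would deduce this from the lexicographic characterisation of $\wus_q(J)$ as a follower set of the subshift $\wus_q$, reusing the block-counting arguments behind Theorem~\ref{main2}. (In particular this forces $\dim_H\wus_q(J)=0$ whenever $h(\wus_q(J))=0$, even though $\wus_q(J)$ may then be uncountable, exactly as for the set $\cb$.) For the lower bound, when $h(\wus_q(J))>0$ one approximates $\wus_q(J)$ from inside by subshifts of finite type of entropy arbitrarily close to $h(\wus_q(J))$, a standard device in this setting; when $h(\wus_q(J))=0$ there is nothing to prove.

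Finally I would assemble the pieces. We have $D_J=H_J/\log2$ on $(q_L,q_R]$ and $D_J(q_L)=0$; moreover, by Theorem~\ref{main2}(i), $\wus_q(J)=\emptyset$ and hence $D_J(q)=0$ for all $q\in(q_L,q_G(J)]$, so $D_J$ is locally constant, in particular continuous, at $q_L$. Since Theorem~\ref{main2} states that $H_J$ is a Devil's staircase on $(q_L,q_R]$, it follows that $D_J=H_J/\log2$ is a Devil's staircase on all of $J$. Moreover $D_J(q)=0\iff H_J(q)=0$, and by Theorem~\ref{main2}: parts (i)--(iv) give $H_J(q)=0$ precisely for $q_L<q\le q_c(J)$, while part (v) gives $H_J(q)>0$ for $q_c(J)<q\le q_R$; combined with $D_J(q_L)=0$ and the fact that $q_c(J)$ lies in the interior of $J$, this shows $D_J(q)=0$ if and only if $q\le q_c(J)$. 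The one genuinely new ingredient needed is the identity $\overline{\dim}_B\wus_q(J)=h(\wus_q(J))/\log2$; everything else is either immediate or a direct application of machinery already established in the paper.
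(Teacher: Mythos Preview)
Your approach is correct and matches the paper's: establish $\dim_H(\wus_q\setminus\wus_{q_L})=\dim_H\wus_q(J)$ via the tail/reflection argument, then appeal to Theorem~\ref{main2}. The paper's own proof is even terser than yours---after proving the dimension equality exactly as you outline, it simply writes ``Hence, the result follows from Theorem~\ref{main2}'' and does not address the identity $\dim_H\wus_q(J)=H_J(q)/\log 2$ at all.

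You correctly flag that identity as the missing link and propose to prove it via box-counting and SFT approximation. This can be made to work, but it is more effort than needed: the paper's own machinery already delivers it. The block map $\Phi_J$ of Proposition~\ref{th:characterization of hat-Phi-J} is bi-H\"older with exponent $1/m$ for the metric $\rho$ (this is used explicitly in the proof of Lemma~\ref{lem:W-bridge}), and it carries $\wus_q(J)$ bijectively onto $\{(x_i)\in\wus^*_{\hat q}:x_1=1\}$. Hence $\dim_H\wus_q(J)=\tfrac1m\dim_H\wus^*_{\hat q}$, and combining the standard identity $\dim_H\wus^*_{\hat q}=h(\wus^*_{\hat q})/\log 2$ with Proposition~\ref{th:characterization of hat-Phi-J}(iii) gives $\dim_H\wus_q(J)=H_J(q)/\log 2$ immediately. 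So what you call ``the one genuinely new ingredient'' is already built into the paper's framework. (A minor technical point in your sketch: the formula $\overline{\dim}_B\wus_q(J)=\frac1{\log2}\limsup_n\frac1n\log\#B_n(\wus_q(J))$ conflates prefix-counting with factor-counting; for a non-shift-invariant set like $\wus_q(J)$ these need not coincide, though the inequality $\leq$---which is all you actually need for the upper bound---still holds since prefixes are factors.)
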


\begin{remark}
Unfortunately, the analogous statement for topological entropy in place of Hausdorff dimension fails: Since sequences in $\wus_q\setminus\wus_{q_L}$ can have arbitrarily long prefixes from any sequence in $\wus_q$, the difference set $\wus_q\setminus\wus_{q_L}$ has the same entropy as $\wus_q$ for all $q\in J\backslash\{q_L\}$.
\end{remark}

Theorems \ref{main1} and \ref{main2} show that the local dimensional functions $f$, $f_-$ and $f_+$ are highly discontinuous on $\overline{\ub}$ (of course, they are everywhere continuous (and equal to zero) on $(1,M+1]\backslash \overline{\ub}$):

\begin{corollary} \label{cor:continuity-of-f}
The local dimensional function $f$ is continuous at $q\in\overline{\ub}$ if and only if $q\in\cb$. The same statement holds for $f_-$ and $f_+$.
\end{corollary}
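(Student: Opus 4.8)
The plan is to deduce Corollary \ref{cor:continuity-of-f} directly from Theorems \ref{main1} and \ref{main2}, together with Proposition \ref{prop:property of C-infity}. The only substantive point is that $f$ (and likewise $f_\pm$) vanishes exactly on $\cb$, which is Theorem \ref{main1}(i), so it suffices to show: (a) $f$ is continuous at every $q\in\cb$, and (b) $f$ is discontinuous at every $q\in\overline{\ub}\setminus\cb$.

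For part (a), fix $q\in\cb$, so $f(q)=0$ by Theorem \ref{main1}(i). Since $f\ge 0$ everywhere, continuity at $q$ amounts to showing $\limsup_{p\to q}f(p)=0$. Here I would split into the two cases of the decomposition $\cb=\cb_0\cup\cb_\f$. If $q\in\cb_\f$, then $q$ lies in infinitely many nested relative plateaus $J_{\mathbf i}$; for any $p$ close to $q$, $p$ lies in one of these $J_{\mathbf i}$ with $\mathbf i$ long, and by Theorem \ref{main1}(ii) together with the monotonicity $\wus_p(J)\subset\wus_q(J)$ we can bound $f(p)\le h(\wus_{q_R(J_{\mathbf i})}(J_{\mathbf i}))/\log q_L(J_{\mathbf i})$, a quantity that tends to $0$ as $|\mathbf i|\to\infty$ because these entropies are dominated by $\dim_H \cb$-type estimates — more concretely, $\# B_m(\wus_p(J_{\mathbf i}))$ is controlled by the number of level-$|\mathbf i|+1$ children of $J_{\mathbf i}$, and the nested structure forces this to shrink; alternatively one invokes $\dim_H\cb=0$ and the fact that the relative plateaus containing $q$ form a neighbourhood basis. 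If $q=q_c(J_{\mathbf i})\in\cb_0$, then $q$ is the common endpoint $J_{\mathbf i 0}$ meets $J_{\mathbf i 1},J_{\mathbf i 2},\dots$; approaching from the left we stay in the null interval $J_{\mathbf i 0}$ where $f_-\equiv 0$ up to the single point $q$ (by Theorem \ref{main2}(iv), $H_{J_{\mathbf i}}(q_c)=0$), and approaching from the right we enter the children $J_{\mathbf i k}$ of $[q_c(J_{\mathbf i}),q_R(J_{\mathbf i})]$, on each of which Theorem \ref{main2}(v) and (i)--(iii) give $f(p)\le h(\wus_{q}(J_{\mathbf i k}))/\log q\to 0$ as $k\to\infty$ since only finitely many children meet any fixed neighbourhood of $q$ and each has entropy bounded by $H_{J_{\mathbf i}}(q)=0$ at the shared left endpoint. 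In all cases $f(p)\to 0=f(q)$.

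For part (b), fix $q\in\overline{\ub}\setminus\cb$. By Theorem \ref{main1} we have $f(q)=h(\wus_q(J))/\log q>0$ where $J=[q_L,q_R]$ is the smallest relative plateau with $q\in(q_L,q_R)$ (when $q\in\overline{\ub}\setminus\ub$, i.e.\ $q=q_L$, one uses instead $f(q)=f_-(q)>0$). To exhibit discontinuity I would produce a sequence $p_n\to q$ with $\limsup_n f(p_n)<f(q)$. If $q$ is the left endpoint $q_L$ of the smallest relative plateau containing it, take $p_n\nearrow q$; these lie in the parent relative plateau but to the left of $q_L$, and by Theorem \ref{main2}(i) (applied to whichever sub-plateaus they fall in) or directly $f(p_n)$ is governed by entropies that are strictly smaller, e.g.\ $f(p_n)\le H_I(p_n)/\log p_n < H_I(q_L)/\log q_L$ — here the key is that $H_J(q_L)=0$ while $H_J$ jumps to a positive right limit is false; rather one uses that $q_L$ is the point where a new, larger admissible word takes over. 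More robustly, I would argue: every relative plateau $J=[q_L,q_R]$ has $f(q_L)=f_-(q_L)>0$ but $f_+(q_L)=0$ (this is exactly the asymmetry noted in the Remark after Theorem \ref{main1}), so $f$ is discontinuous from the right at every left endpoint; and at every right endpoint $q_R$ of a relative plateau $J$ one has $f_-(q_R)<f_+(q_R)$ generically, with the one exceptional case handled separately via Example \ref{ex:endpoints}, giving discontinuity of $f_\pm$ there; finally for interior points $q\in(q_L,q_R)$ with $q\notin\cb$, the function $q\mapsto h(\wus_q(J))$ is constant on some maximal subinterval and strictly increases at its endpoints (since $q\notin\cb$ means $q$ is not in the "fractal" part where the construction recurses forever), and the nested smaller relative plateaus through which nearby points pass force $f$ to jump down. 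The cleanest formulation: for $q\in(q_L,q_R)\setminus\cb$, either $q$ lies in the interior of a smaller relative plateau $J'$ — then reduce to that $J'$ — or $q$ is an endpoint of some relative plateau, handled above; the recursion terminates because $q\notin\cb_\f$.

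The main obstacle is part (b) for interior points: making precise that $f$ really does jump (rather than merely being non-monotone) at a point $q\in\overline{\ub}\setminus\cb$ that is an endpoint of some relative plateau, and cleanly organizing the case distinction so that the recursion on nested relative plateaus visibly terminates for $q\notin\cb_\f$. I expect this to require the detailed endpoint analysis of the Remark following Theorem \ref{main1} and of Example \ref{ex:endpoints}, in particular the identification of the unique exceptional pair of bases where $f_-(q_R)=f_+(q_R)$; one must check that even at that exceptional base, discontinuity still holds from one side (via the right endpoint being simultaneously a left endpoint of a child plateau, where $f_+\equiv 0$ just to the right of it fails — so one instead uses that this base lies strictly between $q_c$ and $q_R$ and appeals to Theorem \ref{main2}(v) plus the Devil's staircase property to find nearby level sets of strictly smaller dimension). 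The continuity direction (a) is comparatively routine once one has $\dim_H\cb=0$ and the observation that the relative plateaus / null intervals through a point of $\cb$ form a neighbourhood basis with entropies shrinking to $0$.
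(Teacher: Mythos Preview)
Your approach to part (a) is basically correct, though more convoluted than necessary. The paper handles $q_0\in\cb_0$ in one line: since $\wus_q(I)\subset\wus_q(J)$ whenever $I\subset J$, Theorem~\ref{main1} gives the universal bound $f(q)\le h(\wus_q(J))/\log q$ for all $q\in J$, and then the continuity of $H_J$ together with $H_J(q_c(J))=0$ (Theorem~\ref{main2}) finishes it. Your version gets there, but the detour through individual children $J_{\mathbf ik}$ is unnecessary and your entropy bound is stated with the wrong plateau.

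Part (b) has a genuine gap. Your ``cleanest formulation'' asserts a dichotomy: for $q\in(q_L,q_R)\setminus\cb$, either $q$ lies in the interior of a smaller relative plateau, or $q$ is an endpoint of one. This is false. Points of the relative bifurcation set $\bb(J)$ (and in particular of $\bb$ itself) are, by Theorem~\ref{main-b}(i), in the interior of $J$ but in no child $J_{\mathbf ij}$ whatsoever --- neither interior nor endpoint. Since $\bb(J)$ has positive Hausdorff dimension while the set of endpoints is countable, these points form the bulk of $\overline{\ub}\setminus\cb$, and your recursion never reaches them. Your right-endpoint argument is also incomplete: the inequality $f_-(q_R)<f_+(q_R)$ compares two limits of $\dim_H$ of shrinking intervals, not two limits of $f(p)$ as $p\to q_R$, so it does not by itself exhibit a sequence $p_n\to q_R$ with $f(p_n)\not\to f(q_R)$. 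The worry about the exceptional case in Example~\ref{ex:endpoints} is therefore beside the point.

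The paper bypasses all of this with a single uniform argument: by Lemma~\ref{lem:dense-plateaus} (density of the children $J_{\mathbf ij}$ in $(q_c(J),q_R]$), every $q_0\in\overline{\ub}\setminus\cb$ is approached from the left by a sequence of relative plateaus $[p_L(i),p_R(i)]$; each such plateau contains a point $q_i\in\cb$ (for instance its de Vries--Komornik number), so $q_i\to q_0$ with $f(q_i)=0$, while $f(q_0)>0$ by Theorem~\ref{main1}. This handles bifurcation points, endpoints, and the ``exceptional'' base all at once, with no case analysis.
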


Next, for any relative entropy plateau $J$ we define the {\em relative bifurcation set}
\[
\bb(J):=\big\{q\in J: h(\wus_p(J))\neq h(\wus_q(J))\ \forall p\in J, p\neq q\big\}.
\]
As a special case, for $J=J_\emptyset=[1, M+1]$ we have $\bb(J)=\bb$.

\begin{main} \label{main-b}
Let $J=J_{\mathbf{i}}=[q_L,q_R]$ be a relative plateau with generating word $\sv=a_1\dots a_m$. Then
\begin{enumerate}[(i)]
\item  
$\bb(J)=\bb(J_{\mathbf i})=J_{\mathbf i}\backslash \bigcup_{j=0}^\f J_{\mathbf{i}j}$;
\item $\bb(J)\subset \ub\cap J$;
\item $\bb(J)$ is Lebesgue null;
\item $\bb(J)$ has full Hausdorff dimension. Precisely,
\[
\dim_H \bb(J)=\dim_H(\ub\cap J)=\frac{\log 2}{m\log q_R};
\]
\item Let $p_0$ be the base with $\alpha(p_0)=\sv^+\overline{\sv}^2\big(\overline{\sv^+}\sv\sv^+\big)^\f$. Then
\[
\dim_H \big((\ub\cap J)\backslash \bb(J)\big)=\frac{\log 2}{3m\log p_0}.
\] 
\end{enumerate}
\end{main}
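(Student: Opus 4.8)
The plan is to deduce (i)--(iii) from the structure of the Devil's staircase $H_J$ furnished by Theorem~\ref{main2}, to prove (iv) by combining a covering estimate with an explicit Moran construction inside $\bb(J)$, and then to read off (v) from (i), (iv) and the tree structure by optimising over the children of $J_{\mathbf i}$. For (i)--(iii) I would first establish the relative analogue of the entropy--plateau characterisation of \cite{AlcarazBarrera-Baker-Kong-2016}: the maximal closed subintervals of $(q_L,q_R]$ on which $H_J$ is constant are exactly $J_{\mathbf i0}=[q_L,q_c(J_{\mathbf i})]$ and the basic intervals $J_{\mathbf i1},J_{\mathbf i2},\dots$ On $J_{\mathbf i0}$ this is Theorem~\ref{main2}(i)--(iv) ($H_J\equiv0$ there), and on $(q_c(J_{\mathbf i}),q_R]$ it follows from the lexicographic description of $\wus_q(J)$ underlying Theorem~\ref{main2}. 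Since $H_J$ is non-decreasing and continuous, $q\in\bb(J)$ holds exactly when $q$ lies in no such constancy interval, which (recalling $J_{\mathbf i0}=[q_L,q_c(J_{\mathbf i})]$) is precisely (i). For (ii): (i) already gives $\bb(J)\subseteq(q_c(J_{\mathbf i}),q_R]\subseteq J$, and the nesting of the tree forces $q\notin\cb$ for $q\in\bb(J)$, since every de Vries--Komornik number and every point of $\cb_\f$ lying in $J_{\mathbf i}$ lies in some $J_{\mathbf i j}$; moreover $q\in\bb(J)$ means $H_J$ is strictly increasing at $q$ from both sides, which rules out $\al(q)$ having a tail equal to $\overline{\al(q)}$ or lexicographically $\succeq\al(q)$ (by an argument parallel to the proof that $\bb\subseteq\ub$ in \cite{AlcarazBarrera-Baker-Kong-2016}), so $\al(q)$ is univoque and $q\in\ub$. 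Then (iii) is immediate: $\bb(J)=J\setminus\bigcup_{j\ge0}J_{\mathbf ij}$ is contained in the set of points where $H_J$ fails to be locally constant, which is Lebesgue null by Theorem~\ref{main2}.

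For (iv) it suffices, since $\bb(J)\subseteq\ub\cap J$ by (ii), to prove $\dim_H(\ub\cap J)\le\frac{\log2}{m\log q_R}$ and $\dim_H\bb(J)\ge\frac{\log2}{m\log q_R}$. For the upper bound I would use that $\ub\cap(q_L,q_c(J_{\mathbf i}))=\emptyset$ and cover $\ub\cap[q_c(J_{\mathbf i}),q_R]$ by base-intervals indexed by the admissible length-$nm$ prefixes of the sequences $\al(q)$: the admissibility and univoque conditions force each such prefix to be a concatenation $\sv^+C_1\cdots C_{n-1}$ with $C_k\in\{\overline\sv,\overline{\sv^+}\}$ (and further restrictions), so there are at most $2^{n-1}$ of them, while the associated base-intervals have length $\lesssim q^{-nm}$ for the relevant base $q$ and are of size $\asymp q_R^{-nm}$ precisely for the prefixes with many $\overline\sv$-blocks -- the ones present in abundance. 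Organising the cover by the number of $\overline\sv$-blocks balances count against size and yields the exponent $\frac{\log2}{m\log q_R}$. (Alternatively one argues $\dim_H(\ub\cap J)=\max\{\sup_{q\in(q_L,q_R)}f(q),\,f_-(q_R)\}$ by a Lindel\"of covering, computes $f_-(q_R)=h(\wus_{q_R}(J))/\log q_R=\frac{\log2}{m\log q_R}$ from the renormalisation identifying $\wus_q(J)$ with a binary univoque shift (using $\al(q_R)=\sv^+\overline\sv^\f$), and bounds $\sup_{q\in(q_L,q_R)}f(q)$ via Theorem~\ref{main1} together with the fact that $q\mapsto H_{J'}(q)/\log q$ is maximised at the right endpoint of each relative plateau $J'\subseteq J$.) For the lower bound I would run a Moran construction of bases $q$ with $\al(q)=\sv^+C_1C_2\cdots$, each $C_k$ drawn from a fixed pair of admissible length-$m$ blocks built from $\overline\sv$ and $\overline{\sv^+}$, arranged so that the univoque condition holds \emph{and} so that $H_J$ is strictly increasing at each such $q$ (hence $q\in\bb(J)$); with $\sim2^n$ such bases at scale $\asymp q_R^{-nm}$ this gives $\dim_H\bb(J)\ge\frac{\log2}{m\log q_R}-\ep$ for every $\ep>0$. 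Combining the two bounds with $\bb(J)\subseteq\ub\cap J$ proves (iv).

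For (v), by (i) and the tree nesting, $\ub\cap J$ is the disjoint union of $\cb\cap J$ (of Hausdorff dimension $0$ by Proposition~\ref{prop:property of C-infity}) and the sets $\bb(J_{\mathbf k})$ over the tree nodes $\mathbf k\succeq\mathbf i$; removing $\bb(J_{\mathbf i})$ and using $\bb(J_{\mathbf k})\subseteq\ub\cap J_{\mathbf ij}$ whenever $J_{\mathbf k}\subseteq J_{\mathbf ij}$, part (iv) gives
\[
\dim_H\big((\ub\cap J)\setminus\bb(J)\big)=\sup_{j\ge1}\dim_H(\ub\cap J_{\mathbf ij})=\sup_{j\ge1}\frac{\log2}{|\sv_{\mathbf ij}|\log q_R(J_{\mathbf ij})},
\]
with $\sv_{\mathbf ij}$ the word generating $J_{\mathbf ij}$. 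It remains to locate the maximiser. From $\al(q_c(J_{\mathbf i}))\preceq\al(q)\preceq\al(q_R)$ for $q\in[q_c(J_{\mathbf i}),q_R]$ one deduces that every basic interval inside $[q_c(J_{\mathbf i}),q_R]$ has a generating word of length at least $3m$, with equality only for $\sv^+\overline\sv\,\overline{\sv^+}$; a short computation shows the basic interval $J_{\mathbf ij_0}$ that this word generates has $\al(q_R(J_{\mathbf ij_0}))=\sv^+\overline\sv^2(\overline{\sv^+}\sv\sv^+)^\f$, i.e.\ $q_R(J_{\mathbf ij_0})=p_0$, so $\dim_H\bb(J_{\mathbf ij_0})=\frac{\log2}{3m\log p_0}$. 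For every other child, $|\sv_{\mathbf ij}|\ge3m+1$ while $q_R(J_{\mathbf ij})>q_L(J_{\mathbf ij})\ge q_c(J_{\mathbf i})$; and since $\al(q_c(J_{\mathbf i}))$ and $\al(p_0)$ agree on their first $3m-1$ digits one has $q_c(J_{\mathbf i})>p_0^{3m/(3m+1)}$, whence $|\sv_{\mathbf ij}|\log q_R(J_{\mathbf ij})>(3m+1)\log p_0^{3m/(3m+1)}=3m\log p_0$. Hence the supremum equals $\frac{\log2}{3m\log p_0}$, proving (v).

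The hard part will be the two dimension estimates in (iv). The upper bound is delicate because $\dim_H(\ub\cap J)$ is concentrated near $q_R$: a naive cover by base-cylinders of one fixed length (which are large near $q_c(J_{\mathbf i})$) only gives the non-sharp exponent $\frac{\log2}{m\log q_c(J_{\mathbf i})}$, so one must carefully trade the number of admissible prefixes against the cylinder lengths (or, equivalently, prove that $q\mapsto H_{J'}(q)/\log q$ peaks at the right endpoint and compute $h(\wus_{q_R}(J))=\frac{\log2}{m}$). The lower bound requires certifying that every base produced by the Moran construction lies in no child interval $J_{\mathbf ij}$ -- this is what constrains the admissible blocks $C_k$ and is the technical core, adapting the argument of \cite{AlcarazBarrera-Baker-Kong-2016} that $\dim_H\bb=1$. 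In (v), the two lexicographic estimates -- the sharp word-length bound $|\sv_{\mathbf ij}|\ge3m$ and $q_c(J_{\mathbf i})>p_0^{3m/(3m+1)}$ (which also uses $q_c(J_{\mathbf i})>q_{KL}(M)$ together with $q_{KL}(M)^3>M+1$) -- are where the remaining care is needed.
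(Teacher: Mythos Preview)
Your outline for (i)--(iii) is correct and close in spirit to the paper's, though the paper streamlines (ii) by a different route: it first observes that $\bb(J)$ lies in the closure of the set of endpoints of the $J_{\mathbf ij}$ (all of which are in $\vb$), hence $\bb(J)\subset\vb$ and $\hat\Phi_J$ is defined on all of $\bb(J)$; then the identity $\hat\Phi_J(\bb(J))=\bb^*$ together with $\bb^*\subset\ub^*$ and Proposition~\ref{th:characterization of hat-Phi-J}(ii) gives $\bb(J)\subset\ub\cap J$ immediately, with no direct lexicographic analysis of $\al(q)$.

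The substantive divergence is in (iv). The paper does not run a Moran construction at all. For the upper bound it simply quotes~\eqref{eq:dimension-local-u}. For the lower bound it exploits the H\"older continuity of $\hat\Phi_J$ established in Lemma~\ref{lem:Holder continuity of hatPhi}: since $\hat\Phi_J(\bb(J))=\bb^*$ and $\dim_H(\bb^*\cap[2-\eta,2])=1$ for every $\eta>0$ (from~\cite{AlcarazBarrera-Baker-Kong-2016}), the H\"older exponent $\log\hat q_0/(m\log q_R)$ of $\hat\Phi_J$ on $[q_0,q_R]$ yields $\dim_H\bb(J)\geq(\log\hat q_0)/(m\log q_R)$, and letting $q_0\nearrow q_R$ (so $\hat q_0\to 2$) finishes. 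The step you flag as the technical core---certifying that every base produced by a Moran construction avoids all child intervals---is thus entirely bypassed. Your direct approach should work, but it duplicates effort already absorbed into the properties of $\hat\Phi_J$.

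Your argument for (v) has a genuine gap. The inequality $q_c(J_{\mathbf i})>p_0^{3m/(3m+1)}$ does not follow merely from agreement of $\al(q_c(J_{\mathbf i}))$ and $\al(p_0)$ on their first $3m-1$ digits: that only bounds $|p_0-q_c(J_{\mathbf i})|$ additively, not the ratio $\log q_c/\log p_0$, and your parenthetical appeal to $q_{KL}^3>M+1$ does not visibly supply what is missing. Moreover, using only $|\sv_{\mathbf ij}|\ge 3m+1$ (in fact $\ge 4m$, since these lengths are multiples of $m$) together with $q_R(J_{\mathbf ij})>q_c(J_{\mathbf i})$ is not strong enough when the child lies close to $q_c(J_{\mathbf i})$ and hence well below $p_0$. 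The paper instead does a case split: if $p_R\ge p_0$ then $l\log p_R\ge 3m\log p_0$ is trivial from $l\ge 3m$; if $p_R<p_0$, one reads off from the labeled graph $\mathcal G$ that in fact $l\ge 5m$, and then the concrete bound $q_{KL}\ge(M+2)/2$ (with a separate numerical check for $M=1$) gives $5m\log p_R>5m\log q_{KL}\ge 3m\log(M+1)>3m\log p_0$. You should adopt this split rather than trying to make a single inequality work uniformly.
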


The representation of $\bb(J)$ in (i) explains why we call the intervals $J_{\mathbf{i}j}$ relative entropy plateaus: They are the maximal intervals on which $h(\wus_q(J_{\mathbf i}))$ is positive and constant.
Comparing statements (i)-(iv) above with the properties of $\bb$ given after \eqref{eq:bifurcation set}, we can say that the set $\bb(J)$ plays the same role on a local level (i.e. within $J$) as the bifurcation set $\bb$ does on a global level. We may observe also that (v) is similar to \cite[Theorem 4]{Allaart-Baker-Kong-17}, which gives the Hausdorff dimension of $\ub\backslash\bb$.

From Proposition \ref{prop:property of C-infity}(i) and Theorem \ref{main-b}(i),(ii) we obtain the following decomposition of $\ub$ into mutually disjoint subsets (recall that $\ub\cap[q_L,q_c(J))=\emptyset$ while $q_c(J)\in\cb$ for any relative plateau $J=[q_L,q_R]$):
\[
\ub=\cb\cup\bb\cup \bigcup_{n=1}^\f\bigcup_{\mathbf i\in\{1,2,\ldots\}^n}\bb(J_{\mathbf i}).
\]

Using Theorems \ref{main1} and \ref{main2} we can answer an open question of Kalle et al.~\cite{Kalle-Kong-Li-Lv-2016}, who asked for the Hausdorff dimension of $\ub\cap[t_1, t_2]$ for any $t_1<t_2$. 

\begin{main} \label{main3}
For any $1<t_1<t_2\le M+1$ we have 
\[
\dim_H(\ub\cap[t_1, t_2])=\max\set{\frac{h(\wus_q(J))}{\log q}: q\in\overline{\bb(J)\cap[t_1, t_2]}},
\]
where $J=[q_L, q_R]$ is the smallest relative plateau containing $[t_1, t_2]$. 
\end{main}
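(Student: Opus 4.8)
The plan is to prove the two inequalities separately. For $q\in(q_L,q_R]$ write $g(q):=h(\wus_q(J))/\log q$; by Theorem~\ref{main2} the map $q\mapsto h(\wus_q(J))$ is continuous there, so $g$ is continuous on $(q_L,q_R]$, vanishes on $(q_L,q_c(J)]$ and is strictly positive on $(q_c(J),q_R]$. Let $D^{*}$ denote the right-hand side of the theorem. A quick preliminary observation is that $\bb(J)\cap[t_1,t_2]\ne\emptyset$, hence $D^{*}>0$: indeed $[t_1,t_2]\subseteq J$ is connected while the intervals $J_{\mathbf i j}$ ($j\ge0$) are closed and pairwise disjoint, so minimality of $J$ forces $[t_1,t_2]\not\subseteq\bigcup_{j\ge0}J_{\mathbf i j}$, and by Theorem~\ref{main-b}(i) a point of $[t_1,t_2]$ lying outside all the $J_{\mathbf i j}$ belongs to $\bb(J)\subseteq(q_c(J),q_R]$. (If $[t_1,t_2]$ is contained in a null interval, then $\ub\cap[t_1,t_2]$ has at most one point and the statement is trivial, so I assume otherwise.)

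\emph{Lower bound.} For $q\in\bb(J)\cap[t_1,t_2]$ we have, by Theorem~\ref{main-b}(i),(ii), that $q\in\ub\setminus\cb$ and that $q$ lies in no relative plateau properly inside $J$; thus $J$ is the smallest relative plateau with $q$ in its interior, and Theorem~\ref{main1}(ii) gives $f(q)=g(q)$ (and likewise $f_+(t_1)=g(t_1)$, $f_-(t_2)=g(t_2)$ if $t_1$, resp.\ $t_2$, lies in $\bb(J)$). If $q\in(t_1,t_2)$, then $(q-\delta,q+\delta)\subseteq[t_1,t_2]$ for small $\delta$ and so $\dim_H(\ub\cap[t_1,t_2])\ge\lim_{\delta\to0}\dim_H(\ub\cap(q-\delta,q+\delta))=f(q)=g(q)$; the endpoints $q=t_1$, $q=t_2$ are handled the same way with $f_+$, $f_-$. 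Taking the supremum over $\bb(J)\cap[t_1,t_2]$ and using continuity of $g$ to pass to the closure yields $\dim_H(\ub\cap[t_1,t_2])\ge D^{*}$.

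\emph{Upper bound.} Intersecting the decomposition $\ub=\cb\cup\bb\cup\bigcup_{\mathbf i'}\bb(J_{\mathbf i'})$ with $J=J_{\mathbf i}$: the terms with $J_{\mathbf i'}\not\subseteq J$ drop out (then $\bb(J_{\mathbf i'})\cap J=\emptyset$), the null child contributes only $q_c(J)\in\cb$, and every remaining $\bb(J_{\mathbf i'})$ lies in some child $J_{\mathbf i j}$ with $j\ge1$; hence
\[
\ub\cap[t_1,t_2]\subseteq\bigl(\cb\cap[t_1,t_2]\bigr)\cup\bigl(\bb(J)\cap[t_1,t_2]\bigr)\cup\bigcup_{j\ge1}\bigl(\ub\cap J_{\mathbf i j}\cap[t_1,t_2]\bigr),
\]
and by countable stability of Hausdorff dimension it suffices to bound each piece by $D^{*}$. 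The first has dimension $0$ by Proposition~\ref{prop:property of C-infity}(iii). For the second, the elementary identity $\dim_H A=\sup_{x\in A}\lim_{\delta\to0}\dim_H\bigl(A\cap(x-\delta,x+\delta)\bigr)$ (valid for any $A\subseteq\R$, by a Lindel\"of covering argument) applied to $A=\bb(J)\cap[t_1,t_2]\subseteq\ub$ reduces the bound to $f(q)\le D^{*}$ (and $f_\pm$ at $t_1$, $t_2$) for $q$ in that set — exactly what the lower-bound computation provides. For the third, fix $j\ge1$ with $J_{\mathbf i j}=[q_L^{(j)},q_R^{(j)}]$ meeting $[t_1,t_2]$ in a non-degenerate interval, and let $m_j$ be the length of its generating word. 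Then $q\mapsto h(\wus_q(J))$ is constant on $J_{\mathbf i j}$ by Theorem~\ref{main-b}(i), say $\equiv c_j$; since the generating word of $J_{\mathbf i j}$ begins with $\sv^{+}$ we have $\wus_q(J_{\mathbf i j})\subseteq\wus_q(J)$, and evaluating at $q=q_R^{(j)}$, together with the maximal-entropy identity $h(\wus_{q_R^{(j)}}(J_{\mathbf i j}))=(\log 2)/m_j$ underlying Theorem~\ref{main-b}(iv), gives $c_j\ge(\log 2)/m_j$. On the other hand $\dim_H(\ub\cap J_{\mathbf i j}\cap[t_1,t_2])\le\dim_H(\ub\cap J_{\mathbf i j})=(\log 2)/(m_j\log q_R^{(j)})$ by Theorem~\ref{main-b}(iv). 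Minimality of $J$ excludes $[t_1,t_2]\subseteq J_{\mathbf i j}$, so $t_1<q_L^{(j)}$ or $t_2>q_R^{(j)}$, and correspondingly $q_L^{(j)}\in(t_1,t_2)$ or $q_R^{(j)}\in(t_1,t_2)$; moreover both endpoints of $J_{\mathbf i j}$ lie in $\overline{\bb(J)}$ (a short one-sided interval abutting such an endpoint cannot be covered by the disjoint closed children without some child's closure overrunning the endpoint). Hence the relevant endpoint lies in $\overline{\bb(J)\cap[t_1,t_2]}$, and since $q_L^{(j)}<q_R^{(j)}$ and $c_j\ge(\log 2)/m_j$ we get, in the first case, $D^{*}\ge g(q_L^{(j)})=c_j/\log q_L^{(j)}\ge(\log 2)/(m_j\log q_R^{(j)})$, and in the second, $D^{*}\ge g(q_R^{(j)})=c_j/\log q_R^{(j)}\ge(\log 2)/(m_j\log q_R^{(j)})$; either way $\dim_H(\ub\cap J_{\mathbf i j}\cap[t_1,t_2])\le D^{*}$. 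Combining the three bounds gives $\dim_H(\ub\cap[t_1,t_2])\le D^{*}$, and with the lower bound this is the theorem.

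I expect the third piece of the upper bound to be the crux. Because $g$ restricted to a child $J_{\mathbf i j}$ can be far larger than $g$ at the nearby points of $\overline{\bb(J)}$, one cannot simply dominate $\dim_H(\ub\cap J_{\mathbf i j}\cap[t_1,t_2])$ by $\sup_{J_{\mathbf i j}}g$; the saving grace is that the true dimension $(\log 2)/(m_j\log q_R^{(j)})$ of $\ub$ inside the child is small enough to be controlled by the value of $g$ at an \emph{endpoint} of $J_{\mathbf i j}$ (which does lie in $\overline{\bb(J)}$), and making this precise rests on the maximal-entropy identity $h(\wus_{q_R(I)}(I))=(\log 2)/|\sv_I|$ for relative plateaus $I$, which is part of the proof of Theorem~\ref{main-b}(iv). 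The remaining delicate points are bookkeeping: the behaviour at $t_1,t_2$ via the one-sided functions $f_\pm$, and checking that the child endpoints entering the estimate actually belong to $\overline{\bb(J)\cap[t_1,t_2]}$ rather than merely to $\overline{\bb(J)}$.
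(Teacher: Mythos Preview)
Your proof is correct and follows essentially the same approach as the paper's: both split into the same lower and upper bounds, and both handle the children $J_{\mathbf{i}j}$ via the key inequality $\dim_H(\ub\cap J_{\mathbf{i}j})=(\log 2)/(m_j\log q_R^{(j)})\le h(\wus_q(J))/\log q$ at an endpoint $q\in\overline{\bb(J)\cap[t_1,t_2]}$. The only differences are cosmetic: for the piece $\bb(J)\cap[t_1,t_2]$ you use a Lindel\"of covering argument where the paper uses compactness of the closure with an $\varepsilon$, and you separate out $\cb\cap[t_1,t_2]$ as its own zero-dimensional piece whereas the paper absorbs it into the compactness step; your justification of the entropy inequality $c_j\ge(\log 2)/m_j$ via $\wus_q(J_{\mathbf{i}j})\subseteq\wus_q(J)$ and $h(\wus_{q_R^{(j)}}(J_{\mathbf{i}j}))=(\log 2)/m_j$ is perhaps more self-contained than the paper's citation of \cite{Allaart-Baker-Kong-17}, but the content is the same (the identity follows at once from Proposition~\ref{th:characterization of hat-Phi-J}(iii) with $\hat q_R^{(j)}=2$, rather than from the proof of Theorem~\ref{main-b}(iv) as you write).
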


\begin{remark}
If $(t_1,t_2)$ intersects the bifurcation set $\bb$, then $J=[1,M+1]$ and the expression in Theorem \ref{main3} simplifies to
\begin{align*}
\dim_H(\ub\cap[t_1, t_2])&=\max\set{\frac{h(\wus_q)}{\log q}: q\in\overline{\bb\cap[t_1, t_2]}}\\
&=\max\set{\dim_H \u_q: q\in \overline{\bb\cap[t_1, t_2]}}.
\end{align*}
Setting $t_1=1$ and noting that the map $q\mapsto \dim_H \u_q$ is continuous on $(1,M+1]$ and is decreasing inside each entropy plateau, we obtain Theorem 3 of \cite{Kalle-Kong-Li-Lv-2016}, namely
\[
\dim_H(\ub\cap[1, t])=\max_{q\leq t}\dim_H \u_q, \qquad t\in[1,M+1].
\]
\end{remark}

\subsection{Application to strongly univoque sets}
In 2011, Jordan et al.~\cite{Jordan-Shmerkin-Solomyak-2011} introduced the sets
\begin{equation}
\check\us_q:=\bigcup_{k=1}^\f\set{(x_i)\in\Omega_M: \overline{\al_1(q)\ldots\al_k(q)}\prec x_{n+1}\ldots x_{n+k}\prec \al_1(q)\ldots \al_k(q)~\forall n\ge 0}.
\label{eq:strongly-univoque-set}
\end{equation}
(In fact, their definition was slightly different in that they require the above inequalities only for all sufficiently large $n$. They also defined $\check\us_q$ in a dynamical, rather than a symbolic way, but the definitions are easily seen to be equivalent.) Jordan et al. used the sets $\check\us_q$ to study the multifractal spectrum of Bernoulli convolutions. Recently, the first author \cite{Allaart-2016} used them to characterize the infinite derivatives of certain self-affine functions, and studied them in more detail in \cite{Allaart-2017} where they were called \emph{strongly univoque sets}. 

In view of (\ref{eq:def-widetilde-uq}) it is clear that $\check\us_q\subseteq\wus_q$ for all $q\in(1, M+1]$. On the other hand, $\check\us_q\supset\wus_p$ for all $p<q$  (see \cite{Jordan-Shmerkin-Solomyak-2011} or \cite[Lemma 2.1]{Allaart-2017}). It follows that 
\begin{equation} \label{eq:kkk-1}
\check\us_q=\bigcup_{p<q}\wus_p,
\end{equation}
and, since the function $q\mapsto \dim_H \wus_q$ is continuous, that $\dim_H \check{\us}_q=\dim_H \wus_q$ for every $q$.
A natural question now, is whether $\check\us_q$ could in fact equal $\wus_q$. Following \cite{Allaart-2017}, we define the difference set 
\begin{align}
\begin{split}
\mathbf W_q:&=\wus_q\setminus\check \us_q\\ 
&=\bigcap_{k=1}^\f \bigcup_{n=0}^\infty\set{(x_i)\in\wus_q: x_{n+1}\ldots x_{n+k}=\al_1(q)\ldots \al_k(q)\textrm{ or }\overline{\al_1(q)\ldots \al_k(q)}},
\end{split}
\label{eq:Wq-def}
\end{align}  
and its projection, $\mathcal W_q:=\pi_q(\mathbf W_q)$. One of the main results in \cite{Allaart-2017} is that $\mathcal W_q\neq\emptyset$ if and only if $q\in\overline{\ub}$, and then $\mathcal W_q$ is in fact uncountable. It is also shown in \cite{Allaart-2017} that $\dim_H\mathcal W_q=0$ whenever $q\in\mathscr C_0$ is a de Vries-Komornik number.

Using the techniques developed in this paper, we can improve on the results of \cite{Allaart-2017} and completely characterize the Hausdorff dimension of $\mathcal W_q$.

\begin{main} \label{main4}
For any {$q\in(1, M+1]$} we have  
\[
\dim_H\mathcal W_q=f_-(q). 
\]
\end{main}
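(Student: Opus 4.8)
The plan is to prove the two inequalities $\dim_H\mathcal W_q\le f_-(q)$ and $\dim_H\mathcal W_q\ge f_-(q)$ separately, reducing everything to the symbolic setting: since $\pi_q$ is bi-Lipschitz on $\Omega_M$ with respect to the metric $\rho$ of \eqref{eq:rho-metric} when restricted to sequences that do not end in $0^\infty$ or $M^\infty$, it suffices to show $\dim_H\ws_q=f_-(q)$, where $\ws_q$ is the symbolic difference set from \eqref{eq:Wq-def}. First I would dispose of the easy regimes using the results already in hand: if $q\notin\overline{\ub}$ then $\mathcal W_q=\emptyset$ (by the cited result from \cite{Allaart-2017}) and $f_-(q)=0$, so both sides vanish; and if $q\in\cb$ then $f_-(q)=0$ by Theorem \ref{main1}(i), and $\dim_H\mathcal W_q=0$ follows from the stated fact (for $q\in\cb_0$) together with an analogous argument for $q\in\cb_\f$ that I would supply using Proposition \ref{prop:property of C-infity}(iii): points of $\cb_\f$ sit inside arbitrarily deep relative plateaus $J_{\mathbf i}$, and one can bound $\dim_H\ws_q$ by $\dim_H(\ub\cap J_{\mathbf i})=\frac{\log 2}{m_{\mathbf i}\log q_R^{(\mathbf i)}}\to 0$. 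So the substance is the case $q\in\overline{\ub}\setminus\cb$, where by Theorem \ref{main1}(ii) we must show $\dim_H\ws_q=\frac{h(\wus_q(J))}{\log q}$ with $J=[q_L,q_R]$ the smallest relative plateau with $q\in(q_L,q_R]$.

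For the \textbf{lower bound}, the key observation is that $\ws_q\supseteq\wus_{q_L}$: any sequence all of whose shifts lie strictly between $\overline{\al(q_L)}$ and $\al(q_L)$, with $q_L<q$, will have infinitely many windows agreeing with a prefix of $\al(q)$ or its reflection — more carefully, one should use \eqref{eq:kkk-1} to see that $\check\us_q=\bigcup_{p<q}\wus_p$, hence $\ws_q=\wus_q\setminus\bigcup_{p<q}\wus_p$ consists exactly of the sequences that are ``new at $q$''. The natural lower-dimension estimate then comes from realizing $\ws_q$ as containing, up to a shift and a prefix, a copy of $\wus_r(J)$ for bases $r$ slightly less than $q$ but still in $J$, i.e. sequences in $\ws_q$ can be built by concatenating admissible blocks from $\wus_r(J)$ glued along the ``resonance'' pattern; since $h(\wus_r(J))\uparrow h(\wus_q(J))$ as $r\uparrow q$ by the continuity half of Theorem \ref{main2}, and since the gluing construction costs no dimension in the limit (the glued prefixes have fixed length $m$ per block), I would obtain $\dim_H\ws_q\ge\frac{h(\wus_q(J))}{\log q}$. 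Making the ``copy of $\wus_r(J)$'' precise — ensuring the concatenated sequences genuinely lie in $\wus_q$ (lexicographic admissibility under all shifts) and genuinely fail the strong-univoque condition at every scale $k$ — is where the bulk of the careful work lies, and I'd model it on the corresponding constructions in \cite{Allaart-Kong-2018} and \cite{Allaart-Baker-Kong-17}.

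For the \textbf{upper bound}, I would cover $\ws_q$ by cylinder sets exploiting the defining property in \eqref{eq:Wq-def}: every $(x_i)\in\ws_q$ has, for each $k$, some $n=n_k$ with $x_{n+1}\dots x_{n+k}$ equal to $\al_1(q)\dots\al_k(q)$ or its reflection. Iterating this with a rapidly increasing sequence of $k$'s produces, for each point, infinitely many ``marked'' positions where a long known block appears; the number of length-$N$ prefixes realizable in $\ws_q$ is therefore controlled by the number of ways to interleave free admissible blocks (counted by $e^{N H_J(q)(1+o(1))}$, since outside the marked blocks the sequence lies in $\wus_q(J)$) with the forced blocks, and the forced blocks contribute nothing to the exponential growth rate. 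This yields $h(\ws_q)\le h(\wus_q(J))$ and hence $\dim_H\ws_q\le\frac{h(\wus_q(J))}{\log q}$. \textbf{The main obstacle} I anticipate is the lower bound: one must construct a set of sequences that simultaneously (a) lies in $\wus_q$, (b) lies in $\mathbf W_q$ (i.e. violates the strong-univoque inequalities at \emph{every} finite level $k$, uniformly along a subsequence of positions), and (c) carries full entropy $h(\wus_q(J))$; reconciling the ``full entropy'' requirement with the ``hit a long forced block infinitely often'' requirement forces a delicate block-coding with blocks whose lengths grow fast enough to make the forced insertions asymptotically negligible but slow enough that admissibility under the shift is not destroyed at the seams. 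This is exactly the kind of estimate that Theorem \ref{main2}'s fine structure of $\wus_q(J)$ (in particular the continuity and the positivity threshold at $q_c(J)$) is designed to support.
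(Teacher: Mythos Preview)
Your overall architecture --- dispose of $q\notin\overline{\ub}$ and $q\in\cb$, then prove matching bounds for $q\in\overline{\ub}\setminus\cb$ --- matches the paper's. But there are real gaps and a key structural difference.

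\textbf{A false claim and a missed simplification.} The assertion $\ws_q\supseteq\wus_{q_L}$ is backwards: by \eqref{eq:kkk-1}, $\wus_{q_L}\subset\check\us_q$, so $\wus_{q_L}\cap\ws_q=\emptyset$. You seem to notice this and pivot to ``$\ws_q$ is the new sequences at $q$,'' but the follow-up idea --- that $\ws_q$ contains ``a copy of $\wus_r(J)$'' for $r<q$ --- has the same problem, since $\wus_r(J)\subset\wus_r\subset\check\us_q$. What you actually need is to \emph{interleave} blocks coming from a large-entropy subshift with longer and longer prefixes of $\al(q)$ so that the resulting sequences lie in $\wus_q$ but in no $\wus_p$ for $p<q$. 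The paper does exactly this, but only for $q\in\bb_L$ (where the target is the full entropy $h(\wus_q)$), via a rather delicate construction: approximate $q$ from below by plateau endpoints $p_L(n)$, represent $\wvs_{p_L(n)}$ as an irreducible SFT, use Perron--Frobenius to count closed paths through the vertex $\al_1\dots\al_{l-1}$, manufacture explicit connecting blocks joining $\al_1\dots\al_{m_j}^-$ back to that vertex, and then build a Cantor set $\mathbf Y\subset\ws_q$ supporting a mass distribution satisfying $\mu([y_1\dots y_k])\le C\cdot 2^{-ks}$ for all $s<\dim_H\wus_{p_L(n)}$. Your sketch (``glued along the resonance pattern'') does not yet contain these ingredients, and admissibility at the seams is precisely the hard part.

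\textbf{The bridge reduction you are missing.} The paper does \emph{not} attempt the construction above for general $q\in\overline{\ub}\setminus\cb$. Instead it proves a ``$\ws$-bridge'' lemma: for $q\in\vb\cap(q_L,q_R]$, the map $\Phi_J$ carries $\ws_q\cap\wus_q(J)$ bijectively onto $\ws_{\hat q}^*\cap\{(x_i):x_1=1\}$, whence $\dim_H\ws_q=\frac{1}{m}\dim_H\ws_{\hat q}^*$. Since the smallest plateau $J$ is chosen so that $\hat q\in\bb_L^*$, this reduces everything to the $\bb_L$ case for the alphabet $\{0,1\}$. Your direct approach could in principle be made to work, but it would force you to redo the Perron--Frobenius/mass-distribution construction inside each $\wus_q(J)$ rather than once inside $\wus_q$; the bridge buys a substantial simplification. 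Incidentally, the same bridge handles $q\in\cb_\f$ in one line ($\dim_H\ws_q\le 1/m\to 0$), avoiding the category confusion in your sketch where you compare $\dim_H\ws_q$ (a set of sequences) with $\dim_H(\ub\cap J_{\mathbf i})$ (a set of bases).

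\textbf{On the upper bound.} Your covering argument is more elaborate than needed. Once you observe that every sequence in $\ws_q$ must (after a shift) lie in $\wus_q(J)$ --- because it contains arbitrarily long prefixes of $\al(q)$, hence the block $\sv^+$ --- you get $\dim_H\ws_q=\dim_H(\ws_q\cap\wus_q(J))\le\dim_H\wus_q(J)=h(\wus_q(J))/\log 2$ immediately. The block-counting heuristic you describe is not wrong, but it is not where the difficulty lies.
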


\begin{remark}\mbox{}

\begin{enumerate}
\item By Proposition \ref{prop:local dimension-B} and Theorem \ref{main4} it follows that for each $q\in\bb$ we have 
\[
\dim_H\mathcal W_q=\dim_H\u_q>0.
\]
This provides a negative answer to Question 1.8 of \cite{Allaart-2017}, where it was conjectured that $\dim_H\mathcal W_q<\dim_H\u_q$ for all $q>q_{KL}$. Looking at \eqref{eq:kkk-1}, the above result is not too surprising, since the set-valued function $q\mapsto \wus_q$ is ``most discontinuous" at points of $\bb$.

\item Let $q\in\overline{\ub}$. By Theorem \ref{main1} (i) and Theorem \ref{main4} it follows that $\dim_H\mathcal W_q=0$ if and only if $q\in\cb$. 
This completely characterizes the set $\{q:\dim_H\mathcal W_q=0\}$, extending Theorem 1.5 of \cite{Allaart-2017}. 

\item In view of \eqref{eq:kkk-1} and remark (2) above, we could say that, at points of $\overline{\ub}\backslash \cb$, the set-valued function $q\mapsto \wus_q$ ``jumps" by a set of positive Hausdorff dimension.
\end{enumerate}
\end{remark}

The remainder of this article is organized as follows. In Section \ref{sec:C} we prove Proposition \ref{prop:property of C-infity} and give some examples of points in $\cb_\f$. In Section \ref{sec:map Phi-J} we introduce for each relative plateau $J$ a bijection $\Phi_J$ between symbol spaces and its induced map $\hat{\Phi}_J$ between suitable sets of bases, and develop their properties. These maps allow us to answer questions about relative plateaus and relative bifurcation sets by relating them directly to entropy plateaus $[p_L,p_R]$ and the bifurcation set $\bb$ for the alphabet $\{0,1\}$. This is done in Section \ref{sec: proofs of th-1-2}, where we prove Theorems \ref{main1}, \ref{main2} and \ref{main-b}. Section \ref{sec:proof-of-theorem3} contains a short proof of Theorem \ref{main3}, and Section \ref{sec:proof-of-theorem4} is devoted to the proof of Theorem \ref{main4}.

\section{Properties of the set $\cb$} \label{sec:C}

In this section we prove Proposition \ref{prop:property of C-infity}. Recall that $\alpha(q)$ is the quasi-greedy expansion of $1$ in base $q$. The following useful result is well known (cf.~\cite{Baiocchi_Komornik_2007}).

\begin{lemma} \label{lem:quasi-greedy expansion-alpha-q}
The map $q\mapsto \al(q)$ is strictly increasing and bijective from $(1, M+1]$ to the set of sequences $(a_i)\in\Omega_M$ not ending with $0^\f$ and satisfying
\[
\si^n((a_i))\lle (a_i)\quad\forall n\ge 0.
\]
\end{lemma}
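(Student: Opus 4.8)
The plan is to prove the statement in three parts: (i) every $\al(q)$ lies in the set $\mathcal S$ of sequences in $\om$ that do not end in $0^\f$ and satisfy $\si^n((a_i))\lle(a_i)$ for all $n\ge0$; (ii) $q\mapsto\al(q)$ is strictly increasing (hence injective); and (iii) the map is onto $\mathcal S$. Throughout I would use the standard relation between $\al(q)$ and the greedy $q$-expansion $b(q)=(b_i)$ of $1$: either $b(q)$ does not end in $0^\f$ and then $\al(q)=b(q)$, or $b(q)=b_1\cdots b_N\,0^\f$ with $b_N\ge1$ and then $\al(q)=(b_1\cdots b_{N-1}(b_N-1))^\f$.

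For part (i) the key estimate I would establish first is that $\pi_q(\si^n(\al(q)))\le1$ for every $n\ge0$. This falls out of the greedy algorithm: the remainders $r_n:=1-\sum_{i\le n}b_i/q^i$ satisfy $0\le r_n\le q^{-n}$ (using $1\le M/(q-1)$), so $\pi_q(\si^n(b(q)))=q^nr_n\le1$; in the finite case one observes in addition that $\al(q)$ is then purely periodic of period $N$ and agrees with $b(q)$ in its first $N-1$ coordinates, which reduces the claim to the bound for $0\le n<N$. Granting the estimate, I would argue by contradiction: if $\si^n(\al(q))\succ\al(q)$ for some $n\ge1$, with first coordinate of disagreement $k$, then comparing $\pi_q(\si^n(\al(q)))$ with $\pi_q(\al(q))=1$ coordinate by coordinate gives $\pi_q(\si^n(\al(q)))\ge 1+q^{-k}(1-\pi_q(\si^k(\al(q))))\ge1$, and combined with the estimate this forces every inequality to be an equality; unwinding them yields $\al_{n+i}(q)=0$ for all $i>k$, i.e. $\al(q)$ ends in $0^\f$ --- impossible. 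Hence $\si^n(\al(q))\lle\al(q)$ for all $n$, which together with the (definitional) fact that $\al(q)$ does not end in $0^\f$ gives $\al(q)\in\mathcal S$. Part (ii) is then quick: for $1<q<q'\le M+1$, since $\al(q)$ has infinitely many nonzero digits one has $\pi_{q'}(\al(q))<\pi_q(\al(q))=1$; if $\al(q)\lge\al(q')$ then either $\al(q)=\al(q')$, forcing $\pi_{q'}(\al(q))=1$, or $\al(q)\succ\al(q')$, in which case the same coordinate comparison --- now using $\pi_{q'}(\si^j(\al(q')))\le1$ from part (i) --- gives $\pi_{q'}(\al(q))\ge1$; both contradict $\pi_{q'}(\al(q))<1$, so $\al(q)\prec\al(q')$, and injectivity follows.

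For part (iii), given $(a_i)\in\mathcal S$ I would first locate $q$: the function $q\mapsto\pi_q((a_i))$ is continuous and strictly decreasing on $(1,M+1]$, tends to $+\f$ as $q\downarrow1$, and at $q=M+1$ equals $\sum a_i/(M+1)^i\le\sum M/(M+1)^i=1$, so the intermediate value theorem yields a unique $q\in(1,M+1]$ with $\pi_q((a_i))=1$. Next I would prove that $\pi_q(\si^n((a_i)))\le1$ for all $n$: if $\si^n((a_i))\prec(a_i)$ with first coordinate of disagreement $k\ge1$, then, after discarding a nonnegative term, $\pi_q(\si^n((a_i)))-1\le q^{-k}\bigl(\pi_q(\si^{n+k}((a_i)))-1\bigr)$; iterating this (the shift hypothesis permits it at each stage) gives $\pi_q(\si^n((a_i)))-1\le q^{-(k_1+\cdots+k_m)}\bigl(\pi_q(\si^{n+k_1+\cdots+k_m}((a_i)))-1\bigr)$, and since $\pi_q(\si^\ell((a_i)))\le M/(q-1)$ uniformly in $\ell$ while $q^{-(k_1+\cdots+k_m)}\le q^{-m}\to0$, the right-hand side tends to $0$ (it is $\le0$ outright if at some stage $\si^m((a_i))=(a_i)$). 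Finally I would show $(a_i)$ is the lexicographically largest $q$-expansion of $1$ not ending in $0^\f$: if $(d_i)\succ(a_i)$ were another such, with first disagreement at coordinate $j$, then $\pi_q((d_i))=\pi_q((a_i))=1$ forces $(d_j-a_j)/q^j\le q^{-j}\pi_q(\si^j((a_i)))\le q^{-j}$, hence $d_j-a_j=1$, $\pi_q(\si^j((a_i)))=1$, and $d_i=0$ for all $i>j$ --- contradicting that $(d_i)$ does not end in $0^\f$. Thus $(a_i)=\al(q)$, which gives part (iii).

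The hard part, and the only place where genuine care is needed, is the gap between the lexicographic order on $\om$ and the value $\pi_q$: a lexicographically larger sequence can have a strictly smaller $\pi_q$-value, owing to carrying. This is exactly why parts (i) and (iii) must bootstrap the combinatorial hypothesis $\si^n((a_i))\lle(a_i)$ into the metric inequality $\pi_q(\si^n((a_i)))\le1$; in (iii) the telescoping estimate does precisely this, and its convergence hinges on each first-disagreement index being $\ge1$. A minor additional chore is the bookkeeping for the case in which the greedy expansion of $1$ is finite, where $\al(q)$ is periodic and the remainder estimate has to be carried through one period.
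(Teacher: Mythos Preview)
The paper does not actually prove this lemma: it is stated as a well-known result and simply attributed to Baiocchi and Komornik (\cite{Baiocchi_Komornik_2007}). Your proposal is therefore not competing against any argument in the paper itself.

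That said, your proof is correct and follows the classical route that one finds (with minor variations) in Parry's original work and in the Baiocchi--Komornik notes. The three-part structure is exactly right, and the crucial device --- bootstrapping the lexicographic hypothesis $\si^n((a_i))\lle(a_i)$ into the metric bound $\pi_q(\si^n((a_i)))\le 1$ via the telescoping inequality $\pi_q(\si^n((a_i)))-1\le q^{-k}\bigl(\pi_q(\si^{n+k}((a_i)))-1\bigr)$ --- is handled cleanly. The treatment of the finite-greedy case in part~(i) is also correct: writing $\si^n(\al(q))=c_{n+1}\cdots c_N\,\al(q)$ for $0\le n<N$ and expanding gives $\pi_q(\si^n(\al(q)))=\pi_q(\si^n(b(q)))=t_n\le 1$, exactly as you indicate. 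One cosmetic remark: in part~(iii) your phrase ``after discarding a nonnegative term'' refers to dropping $-q^{-k}\pi_q(\si^k((a_i)))\le 0$ from the right-hand side of an upper bound (equivalently, ceasing to subtract a nonnegative quantity); this is correct but the wording could briefly confuse a reader, so you might make the sign explicit.
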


\begin{proof}[Proof of Proposition \ref{prop:property of C-infity}]
(i). It is known that all de Vries-Komornik numbers belong to $\ub$ (cf.~\cite{Kong_Li_2015}), i.e., $\cb_0\subset\ub$. Now let $q\in\cb_\f$ and  $\al(q)=\al_1\al_2\ldots$. Then $q$ belongs to infinitely many relative plateaus. Hence,  there are infinitely many integers $m_1<m_2<\cdots$ such that for each $k$, $\al_1\ldots\al_{m_k}^-$ is admissible, since, if $q$ lies in the relative plateau generated by $b_1\ldots b_n$, then $\al(q)$ must begin with $b_1\ldots b_n^+$. It follows by (\ref{eq:admissible word}) that for each $k$, 
\[
\overline{\al_1\ldots \al_{m_k-i}}\prec \al_{i+1}\ldots \al_{m_k}\lle \al_1\ldots \al_{m_k-i}\quad\forall ~1\le i<m_k.
\]
This implies by induction that $\overline{\al(q)}\prec \si^i(\al(q))\lle\al(q)$ for all $i\in\N$, and hence $q\in\overline{\ub}$ (cf.~\cite{Komornik_Loreti_2007}). But $\overline{\ub}\setminus\ub$ contains only left endpoints of relative plateaus, and these points do not lie in $\cb_\f$. Therefore, $q\in\ub$.

(ii). Clearly, by the construction of $\cb_\f$ it follows that $\cb_\f$ is uncountable, because each relative plateau of level $n$ contains infinitely many pairwise disjoint relative plateaus of level $n+1$. That $\cb$ has no isolated points follows since any right neighborhood of a de Vries-Komornik number contains infinitely many relative plateaus. 

(iii). In \cite{Allaart-Baker-Kong-17}, the following was proved: If $J=[q_L, q_R]$ is a relative plateau generated by $a_1\ldots a_m$, then 
\begin{equation}\label{eq:dimension-local-u}
\dim_H(\overline{\ub}\cap [p, q_R])=\frac{\log 2}{m\log q_R}\quad\textrm{for any }p\in[q_L, q_R).
\end{equation}
(This was stated in \cite{Allaart-Baker-Kong-17} only for entropy plateaus, i.e., the first level relative plateaus, but the proof carries over verbatim to any relative plateau.) 

Observe that $\cb_0$ is countable. Furthermore, for a relative plateau $J_{\mathbf i}$ with $\mathbf{i}\in\{1,2,\dots\}^n$, its generating block $a_1\dots a_m$ satisfies $m\geq n$. That $\dim_H\cb=0$ now follows from the definition of $\cb_\f$, the countably stability of Hausdorff dimension, and (\ref{eq:dimension-local-u}). 
\end{proof}

\begin{example}
It is easy to create specific examples of points in $\cb_\f$. For instance, let $\sv=a_1\ldots a_m$ be an admissible word not of the form $\sb\overline{\sb}$ (e.g. $\sv=1110010$ when $M=1$), and construct a sequence $\al_1\al_2\ldots$ as follows: Set $\al_1\ldots\al_m=\sv^+$, and recursively for $k=0,1,\ldots$, let
\[
\al_{3^k m+1}\ldots \al_{2\cdot 3^k m}=\al_{2\cdot 3^k m+1}\ldots \al_{3^{k+1}m}=\overline{\alpha_1\dots\alpha_{3^k m}}^+.
\]
Then $\al_1\al_2\ldots =\al(q)$ for some $q$, and this $q$ lies in $\cb_\f$.

More generally, one can create many more examples by the following procedure. Let again $\sv=a_1\dots a_m$ be any admissible word not of the form $\sb\overline{\sb}$. Now let $\mathbf w$ be a word using the letters $\sv^+, \overline{\sv}, \overline{\sv^+}, \sv$ beginning with $\sv^+$ such that $\mathbf w^-$ is admissible (e.g. $\mathbf w=\sv^+\overline{\sv}^2\overline{\sv^+}\sv^+\overline{\sv^+}\sv^+$). 
Put $\mathbf v_0:=\mathbf w$, and recursively, for $k=0,1,2,\dots$, let $\mathbf v_{i+1}$ be the word obtained from $\mathbf v_{i}$ by performing the substitutions
\[
\sv \mapsto \mathbf v_i^-, \qquad \sv^+ \mapsto \mathbf{v}_i, \qquad \overline{\sv}\mapsto \overline{\mathbf{v}_i}^+, \qquad \overline{\sv^+}\mapsto \overline{\mathbf{v}_i}.
\]
Since $\mathbf v_{i+1}$ extends $\mathbf v_i$, the limit $\mathbf{v}:=\lim_{i\to\infty}\mathbf v_i$ exists, and $\mathbf{v}=\alpha(q)$ for some $q$, as the interested reader may check using Lemma \ref{lem:quasi-greedy expansion-alpha-q}. Some reflection reveals that $q\in\mathscr{C}$. The de Vries-Komornik numbers are obtained from $\mathbf w=\sv^+\overline{\sv}$; all other examples obtained this way lie in $\cb_\f$, including the example given at the beginning of this remark, which is obtained from $\mathbf w=\sv^+\overline{\sv}^2$. (For each $i$, $q$ lies in the relative plateau $[q_L(i),q_R(i)]$ given by $\alpha(q_L(i))=(\mathbf{v}_i^-)^\f$ and $\alpha(q_R(i))=\mathbf{v}_i\big(\overline{\mathbf{v}_i}^+\big)^\f$; we leave the details for the interested reader.)
\end{example}

\section{Descriptions of the map $\Phi_J$ and the induced map $\hat\Phi_J$} \label{sec:map Phi-J}

In this section we fix a relative plateau $J=[q_L, q_R]$ with
\[
\al(q_L)=\sv^\f\quad\textrm{and}\quad \al(q_R)=\sv^+(\overline{\sv})^\f
\]
for some admissible word $\sv=a_1\ldots a_m$. Note by Definition \ref{def:admissible-words} and Lemma \ref{lem:quasi-greedy expansion-alpha-q} that $q_L$ and $q_R$ are well defined and $q_L<q_R$.

A special role in this paper is played by sets associated with the alphabet $\{0,1\}$. When the alphabet $\{0,1\}$ is intended, we will {affix} a superscript $^*$ to our notation. Thus, $\bb^*=\bb$ when $M=1$, $\ub^*=\ub$ when $M=1$, etc. We call $\bb^*$ the {\em reference bifurcation set}. The key to the proofs of our main results, and the main methodological innovation of this paper, is the construction of a bijection $\hat{\Phi}_J$ from $\bb(J)$ to $\bb^*$. More generally, $\hat{\Phi}_J$ maps important points of $J$ to important points of $(1,2]$ for the case $M=1$. Associated with $\hat{\Phi}_J$ is a symbolic map $\Phi_J$ which maps each set $\wus_q(J)$ to the symbolic univoque set $\wus_{\hat{q}}^*$ for $M=1$, where $\hat{q}=\hat{\Phi}_J(q)$. By using properties of the maps $\Phi_J$ and $\hat{\Phi}_J$, many classical results on univoque sets with alphabet $\{0,1\}$ can be transferred to the relative entropy plateaus and the sets $\wus_q(J)$.

Figure \ref{fig1} shows a directed graph $G$ with two sets of labels. The labeled graph $\mathcal G=(G, \mathcal L)$ with labels in $\mathcal L:=\big\{\sv, \sv^+, \overline{\sv}, \overline{\sv^+}\big\}$ is right-resolving, i.e. the out-going edges from the same vertex in $\mathcal G$ have different labels. Let $X(J)$ be the set of infinite sequences determined by the automata $\mathcal G=(G, \mathcal L)$, beginning at the ``Start" vertex (cf.~\cite{Lind_Marcus_1995}).  We emphasize that each digit $\sd$ in $\mathcal L$ is a block of length $m$, and any sequence in $X(J)$ is an infinite concatenation of blocks from $\mathcal{L}$.

Likewise, the {\em reference labeled graph} $\mathcal G^*=(G, \mathcal L^*)$ with labels in $\mathcal L^*:=\set{0,1}$ is right-resolving. Hence for each $q\in(1,2]$ {the quasi-greedy expansion $\de(q)$ of $1$ in base $q$} is uniquely represented by an infinite path determined by the automata $\mathcal G^*$. Let $X^*\subset \{0,1\}^\N$ be the set of all infinite sequences determined by the automata $\mathcal G^*$, and note that $X^*=\{(x_i)\in\{0,1\}^\N:x_1=1\}$.
Then $\set{\de(q): q\in(1, 2]}\subset X^*$, the inclusion being proper in view of Lemma \ref{lem:quasi-greedy expansion-alpha-q}.

\begin{figure}[h!]
  \centering
  \begin{tikzpicture}[->,>=stealth',shorten >=1pt,auto,node distance=4cm,
                    semithick]

  \tikzstyle{every state}=[minimum size=0pt,fill=none,draw=black,text=black]

  \node[state] (A)                    { $A$};
  \node[state]         (B) [ right of=A] {$B$ };
  \node[state]         (C) [ above of=A] {$Start$};

  \path[->,every loop/.style={min distance=0mm, looseness=40}]
  (C) edge[->] node{$\sv^+\; / \; 1$} (A)
  
   (A) edge [loop left,->]  node {$\overline{\sv}\;/\; 1$} (A)
            edge  [bend left]   node {$\overline{\sv^+}\;/\; 0$} (B)

        (B) edge [loop right] node {$\sv\; /\; 0$} (B)
            edge  [bend left]            node {$\sv^+\; / \; 1$} (A);
\end{tikzpicture}

\caption{The labeled graph $\mathcal G=(G, \mathcal L)$ with labels $\mathcal L=\big\{\sv, \sv^+, \overline{\sv}, \overline{\sv^+}\big\}$, and the  reference labeled graph $\mathcal G^*=(G, \mathcal L^*)$ with labels $\mathcal L^*=\set{0, 1}$. The map $\phi: \mathcal L\ra \mathcal L^*$ is defined by $
\phi(\overline{\sv^+})=\phi(\sv)=0 $ and $ \phi(\sv^+)=\phi(\overline{\sv})=1.
$
}
\label{fig1}
\end{figure}

\begin{proposition}\label{prop:property of Uq(J)}
$\wus_q(J)\subset X(J)$ for {every} $q\in(q_L, q_R]$.
\end{proposition}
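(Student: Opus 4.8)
The plan is to show that every sequence $(x_i)\in\wus_q(J)$ is an infinite concatenation of blocks from $\mathcal L=\{\sv,\sv^+,\overline{\sv},\overline{\sv^+}\}$ arranged according to the transition rules of the graph $G$. Fix $q\in(q_L,q_R]$ and write $\sv=a_1\ldots a_m$, $\alpha(q)=\alpha_1\alpha_2\ldots$. Since $q\in(q_L,q_R]$, Lemma~\ref{lem:quasi-greedy expansion-alpha-q} gives $\sv^\f=\alpha(q_L)\preceq\alpha(q)\preceq\alpha(q_R)=\sv^+(\overline{\sv})^\f$, so $\alpha(q)$ begins with $\sv^+$ (strictly, unless $q=q_L$, which is excluded), and in fact $\alpha(q)\in\{\sv^+\overline{\sv}^k\cdots\}$ — more precisely one checks from the inequality $\sv^\f\prec\alpha(q)\preceq\sv^+(\overline{\sv})^\f$ and the admissibility of $\sv$ that $\alpha(q)$ is itself a concatenation of blocks from $\mathcal L$ lying on a path in $G$ from "Start". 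The first step is therefore to record this structural fact about $\alpha(q)$ for $q\in(q_L,q_R]$; it follows from \cite{AlcarazBarrera-Baker-Kong-2016} (the description of basic intervals) together with Lemma~\ref{lem:quasi-greedy expansion-alpha-q}.

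Next I would argue by induction that any $(x_i)\in\wus_q(J)$ decomposes block-by-block into $\mathcal L$. By definition~\eqref{eq:U_q(J)}, $x_1\ldots x_m=\sv^+$, giving the first block and placing us at vertex $A$. Suppose inductively that $x_1\ldots x_{km}$ is a valid concatenation of $\mathcal L$-blocks ending at vertex $A$ or $B$; I must show $x_{km+1}\ldots x_{(k+1)m}$ is again a block of $\mathcal L$ compatible with the outgoing edges of that vertex. Here I invoke the defining inequalities of $\wus_q$ from \eqref{eq:def-widetilde-uq}: $\overline{\alpha(q)}\prec\sigma^n((x_i))\prec\alpha(q)$ for all $n\ge 0$. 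Applying this with $n=km$ (and using that $\alpha(q)$ starts with $\sv^+$, hence $\overline{\alpha(q)}$ starts with $\overline{\sv^+}$) forces $\overline{\sv^+}\preceq x_{km+1}\ldots x_{(k+1)m}\preceq\sv^+$; the admissibility condition \eqref{eq:admissible word} on $\sv$ then rules out every length-$m$ block strictly between $\overline{\sv}$ and $\sv$ other than those four, so $x_{km+1}\ldots x_{(k+1)m}\in\mathcal L$. A second appeal to \eqref{eq:def-widetilde-uq} — comparing the tail against $\alpha(q)$ resp.\ $\overline{\alpha(q)}$ and using the known structure of $\alpha(q)$ as a path in $G$ — excludes the forbidden transitions (e.g.\ $\overline{\sv}$ cannot follow $\sv$, $\sv^+$ cannot follow $\sv^+$, etc.), matching exactly the edge set of $G$. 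Hence $(x_i)\in X(J)$.

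The main obstacle I anticipate is the bookkeeping in the inductive step: translating the lexicographic constraints $\overline{\alpha(q)}\prec\sigma^n((x_i))\prec\alpha(q)$ at a \emph{general} shift $n=km$ into the four admissible successor blocks and the correct transitions, rather than merely at multiples of $m$ that are obviously ``block-aligned''. One has to check that no carry-over or misalignment can occur — i.e.\ that once $x_1\ldots x_{km}$ sits on a path in $G$, the constraint genuinely localizes to the next length-$m$ window. This is where admissibility \eqref{eq:admissible word} does the real work: it guarantees that a length-$m$ block $b$ with $\overline{\sv}\preceq b\preceq\sv$ or $\overline{\sv^+}\preceq b\preceq\sv^+$ must be one of $\sv,\sv^+,\overline{\sv},\overline{\sv^+}$, and that the adjacency between consecutive such blocks is constrained precisely as in Figure~\ref{fig1}. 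I expect the remaining details (the base case, the case $q=q_R$ where $\alpha(q)=\sv^+\overline{\sv}^\f$, and verifying each of the finitely many forbidden transitions) to be routine lexicographic computations that I would carry out but not belabor here.
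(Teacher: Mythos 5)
Your overall strategy---inducting block by block and using the lexicographic inequalities defining $\wus_q$---is the right one, but the key step in your induction is wrong as stated. You claim that from the constraint at $n=km$ you obtain $\overline{\sv^+}\preceq x_{km+1}\ldots x_{(k+1)m}\preceq\sv^+$, and that admissibility of $\sv$ then forces $x_{km+1}\ldots x_{(k+1)m}\in\mathcal L$. Admissibility does no such thing: for instance with $M=2$ and the admissible word $\sv=20$ (so $\sv^+=21$, $\overline{\sv}=02$, $\overline{\sv^+}=01$), the block $11$ satisfies $\overline{\sv^+}=01\preceq 11\preceq 21=\sv^+$ yet is not in $\mathcal L$. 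So your ``first step'' does not localize the next block to $\mathcal L$, and the ``second appeal'' cannot repair it, since you've ordered the steps so that the second one presupposes the first has succeeded.

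What actually makes the argument work---and what the paper's Lemma~\ref{lem:uq-xb} does---is to apply the constraints $\overline{\sv^+}\sv^\f\lle\si^n((x_i))\lle\sv^+(\overline{\sv})^\f$ not merely at $n=km$ but at \emph{earlier} block-aligned positions: the last position where a block $\sv^+$ occurred (which tightens the upper bound to $\overline{\sv}$, because one already knows the intervening prefix matches $\sv^+(\overline{\sv})^j$) and the last position where $\overline{\sv^+}$ occurred (which tightens the lower bound to $\sv$). Concretely, at vertex $A$ one gets $\overline{\sv^+}\lle x_{km+1}\ldots x_{(k+1)m}\lle\overline{\sv}$ and at vertex $B$ one gets $\sv\lle x_{km+1}\ldots x_{(k+1)m}\lle\sv^+$; in both cases the two bounding blocks agree in their first $m-1$ digits and differ by exactly $1$ in the last digit, so nothing lies strictly between them and the next block is forced into $\mathcal L$ \emph{and} onto the correct outgoing edge of $G$ in a single step. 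Thus the ``localize to $\mathcal L$'' and ``enforce the transition'' parts are not separable; both fall out at once from the earlier-position constraint. A secondary point: the paper does not need the structural fact that $\al(q)$ is itself a path in $G$ (nor the appeal to \cite{AlcarazBarrera-Baker-Kong-2016} that you make for it); it uses only the crude endpoint bounds $\al(q)\lle\al(q_R)=\sv^+(\overline{\sv})^\f$ and $\overline{\al(q)}\lge\overline{\sv^+}\sv^\f$, which follow directly from Lemma~\ref{lem:quasi-greedy expansion-alpha-q}, and then the induction described above.
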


To prove the proposition we need the following. 

\begin{lemma}\label{lem:uq-xb}
Any sequence $(x_i)\in\Omega_M$ satisfying $x_1\ldots x_m=\sv^+$ and 
\begin{equation}\label{eq:inequality-1}
\overline{\sv^+}\sv^\f \lle \si^n((x_i))\lle \sv^+(\overline{\sv})^\f\quad\forall ~ n\ge 0
\end{equation}
belongs to $X(J)$. 
\end{lemma}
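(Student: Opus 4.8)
The idea is to read off from Figure \ref{fig1} that a sequence lies in $X(J)$ if and only if, when parsed into consecutive blocks of length $m$ starting from the beginning, each block is one of $\sv, \sv^+, \overline{\sv}, \overline{\sv^+}$, the first block is $\sv^+$, and the block-to-block transitions respect the adjacency constraints encoded by the automaton: after $\sv^+$ or $\overline{\sv}$ (the two edges landing in vertex $A$) the next block must be $\overline{\sv}$ or $\overline{\sv^+}$; after $\sv$ or $\overline{\sv^+}$ (the two edges landing in vertex $B$) the next block must be $\sv^+$ or $\sv$. So the proof reduces to: (a) showing the hypotheses force $(x_i)$ to parse into these four blocks, and (b) showing the forbidden transitions $\sv^+\!\to\!\sv^+$, $\sv^+\!\to\!\sv$, $\overline{\sv}\!\to\!\sv^+$, $\overline{\sv}\!\to\!\sv$, $\sv\!\to\!\overline{\sv}$, $\sv\!\to\!\overline{\sv^+}$, $\overline{\sv^+}\!\to\!\overline{\sv}$, $\overline{\sv^+}\!\to\!\overline{\sv^+}$ cannot occur.

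First I would work blockwise. Write $(x_i)=B_1 B_2 B_3\cdots$ with each $B_j$ a word of length $m$, so $B_1=\sv^+$. The key observation is that applying the shift $\si^{jm}$ lands us at the start of block $B_{j+1}$, and \eqref{eq:inequality-1} with $n=jm$ gives $\overline{\sv^+}\sv^\f \lle B_{j+1}B_{j+2}\cdots \lle \sv^+(\overline{\sv})^\f$. Comparing the first $m$ symbols, $\overline{\sv^+}\lle B_{j+1}\lle \sv^+$ (using that $\sv^+$ starts with $\sv$ and the bounds on longer prefixes only tighten things — here I would invoke \eqref{eq:admissible word}, i.e. admissibility of $\sv$, to control the lexicographic comparisons of prefixes of $\sv$ against $\overline{\sv}$ and $\sv$). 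To pin $B_{j+1}$ down to one of the four allowed words rather than some intermediate block, I would use \eqref{eq:inequality-1} more carefully: the lower bound $\overline{\sv^+}\sv^\f\lle\si^n((x_i))$ applied at positions \emph{inside} a block, combined with the admissibility inequalities $\overline{a_1\dots a_{m-i}}\lle a_{i+1}\dots a_m \prec a_1\dots a_{m-i}$, forces each block to be "lexicographically extreme" in a way that only $\sv,\sv^+,\overline{\sv},\overline{\sv^+}$ satisfy — essentially the same mechanism by which the greedy/quasi-greedy algorithm parses an expansion of $1$ in base $q_L$ or $q_R$ into these blocks. This is the step I expect to be the main obstacle, since it requires a somewhat delicate interplay between the within-block inequalities and the admissibility of $\sv$; I would likely prove it by induction on $j$, carrying the inductive hypothesis that $B_1\cdots B_j$ is an admissible path in $\mathcal G$ and that the "tail constraint" \eqref{eq:inequality-1} restricted to $B_{j+1}B_{j+2}\cdots$ still holds.

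Once each block is known to be one of the four labels, the transition rules follow quickly. Suppose $B_j\in\{\sv^+,\overline{\sv}\}$; I must rule out $B_{j+1}\in\{\sv^+,\sv\}$. If $B_{j+1}=\sv^+$ or $B_{j+1}=\sv$, then $B_{j+1}$ begins with $\sv$, and I would derive a contradiction with the \emph{upper} bound in \eqref{eq:inequality-1} applied at a shift landing one symbol before the end of $B_j$: concatenating the last symbol of $B_j$ (which is $\ge$ the last symbol of $\sv$, since $B_j\in\{\sv^+,\overline{\sv}\}$ and one checks $a_m\le \overline{a_m}$ fails unless... — here I would instead compare $B_j^{(\mathrm{tail})}B_{j+1}$ against $\sv^+(\overline{\sv})^\f$) forces a block-level inequality violating admissibility of $\sv$ via \eqref{eq:admissible word}. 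The symmetric cases $B_j\in\{\sv,\overline{\sv^+}\}\Rightarrow B_{j+1}\notin\{\overline{\sv},\overline{\sv^+}\}$ are handled by the obvious symmetry $\sv\leftrightarrow\overline{\sv}$, which reverses all the inequalities (and note \eqref{eq:inequality-1} is itself symmetric under bar-reflection combined with swapping the two bounds). Assembling (a) and (b) shows $B_1B_2\cdots$ is exactly a path in $G$ starting with the edge labelled $\sv^+$ out of Start, i.e. $(x_i)\in X(J)$, completing the proof.
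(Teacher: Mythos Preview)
Your overall plan---parse $(x_i)$ into length-$m$ blocks $B_1B_2\cdots$ and argue inductively that the sequence of blocks traces a valid path in the automaton $\mathcal G$---is exactly right, and matches the paper's approach. But you are making the execution much harder than necessary by separating the argument into (a) ``each $B_j$ is one of the four labels'' and (b) ``the transitions are legal'', and by expecting to need shifts at positions \emph{inside} blocks together with the admissibility inequalities \eqref{eq:admissible word}. In particular, from $n=jm$ alone you only get $\overline{\sv^+}\lle B_{j+1}\lle \sv^+$, which does not pin $B_{j+1}$ down to four words; your proposed fix via within-block shifts drags in digits of $B_{j+2}$ and, as you yourself flag, becomes the ``main obstacle'' without a clear resolution. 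Your plan for (b) likewise trails off.

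The simplification you are missing is that the two bounds in \eqref{eq:inequality-1} are themselves block-periodic in $\mathcal L$: the upper bound is $\sv^+(\overline{\sv})^\f$ and the lower bound is $\overline{\sv^+}\sv^\f$. Consequently, once you know the first $jm$ digits exactly match a prefix of one of these bounds, the inequality at that same shift propagates to the \emph{next} block and squeezes $B_{j+1}$ between two words differing only in the last digit---so $B_{j+1}$ is automatically one of exactly two labels, and those two labels are precisely the outgoing edges from the current vertex. Concretely: knowing $B_1=\sv^+$, apply \eqref{eq:inequality-1} with $n=0$ (upper bound, first block matches) and $n=m$ (lower bound) to get $\overline{\sv^+}\lle B_2\lle\overline{\sv}$, hence $B_2\in\{\overline{\sv^+},\overline{\sv}\}$. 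If $B_2=\overline{\sv}$, the upper bound at $n=0$ still matches on two blocks and together with $n=2m$ gives $B_3\in\{\overline{\sv^+},\overline{\sv}\}$; if $B_2=\overline{\sv^+}$, the lower bound at $n=m$ matches on one block and together with $n=2m$ gives $B_3\in\{\sv,\sv^+\}$. Iterating, the induction carries only the information ``which vertex am I at'', uses only shifts at multiples of $m$, and never appeals to \eqref{eq:admissible word}. Steps (a) and (b) thus come out simultaneously, and there is no obstacle.
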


\begin{proof}
Take a sequence $(x_i)$ satisfying $x_1\ldots x_m=\sv^+=a_1\ldots a_m^+$ and (\ref{eq:inequality-1}).
Then by (\ref{eq:inequality-1}) with $n=0$ and $n=m$ it follows that 
\[
\overline{\sv^+}\lle x_{m+1}\ldots x_{2m}\lle \overline{\sv}.
\]
So,  either $x_{m+1}\ldots x_{2m}=\overline{\sv^+}$ or $x_{m+1}\ldots x_{2m}=\overline{\sv}$.

\begin{enumerate}[{\rm(i)}]
\item If $x_{m+1}\ldots x_{2m}=\overline{\sv^+}$, then by (\ref{eq:inequality-1}) with $n=m$ and $n=2m$ it follows that the next block $x_{2m+1}\ldots x_{3m}=\sv^+$ or $\sv$.

\item If $x_{m+1}\ldots x_{2m}=\overline{\sv}$, then $x_{1}\ldots x_{2m}=\sv^+\overline{\sv}$. By (\ref{eq:inequality-1}) with $n=0$ and $n=2m$ it follows that the next block $x_{2m+1}\ldots x_{3m}=\overline{\sv^+}$ or $\overline{\sv}$.
\end{enumerate}

Iterating the above reasoning and referring to Figure \ref{fig1} we conclude that $(x_i)\in X(J)$.  
\end{proof}

\begin{proof}[Proof of Proposition \ref{prop:property of Uq(J)}]
Take $q\in(q_L, q_R]$.  {Since $\al(q_L)=\sv^\f$ and $\al(q_R)=\sv^+(\overline{\sv})^\f$, Lemma \ref{lem:quasi-greedy expansion-alpha-q} implies that $\al_1(q)\ldots \al_m(q)=\sv^+$ and $\al(q)\lle \al(q_R)=\sv^+(\overline{\sv})^\f$. Hence, by \eqref{eq:U_q(J)}, \eqref{eq:def-widetilde-uq} and Lemma \ref{lem:uq-xb}, it follows that $\wus_q(J)\subset X(J)$.}
\end{proof}

We next introduce the right bifurcation set $\vb$ for the set-valued map $q\mapsto \wus_q$ (cf.~\cite{DeVries_Komornik_2008}):
\[
\vb:=\set{q\in(1, M+1]: \wus_r\ne \wus_q\ \forall r>q}.
\]
Recall that $\ub$ is the set of univoque bases. The following characterizations of $\ub$ and $\vb$ are proved in \cite{Vries-Komornik-Loreti-2016}.

\begin{lemma}\label{lem:characterization of V-U}\mbox{}

\begin{enumerate}[{\rm(i)}]  
\item $q\in\ub\setminus\set{M+1}$ if and only if
$\overline{\al(q)}\prec \si^n(\al(q))\prec \al(q)$ {for all } $n\ge 1$.
\item $q\in\vb$ if and only if 
$\overline{\al(q)}\lle \si^n(\al(q))\lle \al(q)$ {for all } $n\ge 1$. 
\end{enumerate}
\end{lemma}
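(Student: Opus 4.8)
The plan is to deduce both equivalences from the lexicographic characterization \eqref{eq:characterization-unique expansion} of $\us_q$ together with Lemma \ref{lem:quasi-greedy expansion-alpha-q}; this is in essence the route of \cite{Vries-Komornik-Loreti-2016}, and I outline its main steps.

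For (i), sufficiency is immediate: if $\overline{\al(q)}\prec\si^n(\al(q))\prec\al(q)$ for all $n\ge 1$, then $\al(q)$ satisfies \eqref{eq:characterization-unique expansion}, so $\al(q)\in\us_q$, whence $1=\pi_q(\al(q))\in\u_q$ and $q\in\ub$; moreover $q\ne M+1$ since $\si(\al(M+1))=M^\f=\al(M+1)$. For necessity, the first step is to observe that $q\in\ub$ forces $\al(q)\in\us_q$: the unique $q$-expansion of $1$ cannot end in $0^\f$ (otherwise $\al(q)$ would be a second expansion of $1$), so it must coincide with $\al(q)$. Feeding $\al(q)$ into \eqref{eq:characterization-unique expansion} now gives, for every $n\ge 1$, that $\si^n(\al(q))\prec\al(q)$ whenever $\al_n(q)<M$ and $\si^n(\al(q))\succ\overline{\al(q)}$ whenever $\al_n(q)>0$, and it remains to drop these side conditions. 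For the upper bound, Lemma \ref{lem:quasi-greedy expansion-alpha-q} already supplies $\si^n(\al(q))\lle\al(q)$, so one only needs to exclude equality; if $\si^n(\al(q))=\al(q)$ then $\al(q)$ is purely periodic, say with minimal period $p$, and the side condition at position $p$ forces $\al_p(q)=M$, while Lemma \ref{lem:quasi-greedy expansion-alpha-q} forces $\al_1(q)\ldots\al_p(q)$ to be the strictly largest among its cyclic rotations; comparing with the rotation that begins at the terminal letter $M$ then forces $\al(q)=M^\f$, i.e. $q=M+1$, a contradiction. The lower bound is handled symmetrically, additionally using that $\al(q)$ does not end in $0^\f$ to rule out $\si^n(\al(q))=\overline{\al(q)}$ at positions with $\al_n(q)=0$.

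For (ii), since $q\mapsto\wus_q$ is non-decreasing, $q\in\vb$ means precisely that $\wus_q\subsetneq\wus_r$ for every $r>q$; and since Lemma \ref{lem:quasi-greedy expansion-alpha-q} already gives $\si^n(\al(q))\lle\al(q)$ for all $n$, the only real content is the equivalence of $q\in\vb$ with $\si^n(\al(q))\lge\overline{\al(q)}$ for all $n\ge 1$. If this lower inequality fails, say $\si^n(\al(q))\prec\overline{\al(q)}$, then a suitable finite prefix of $\al(q)$ becomes a forbidden factor for every $\wus_r$ with $r>q$ sufficiently close to $q$, and a comparison of admissible words yields $\wus_r=\wus_q$, so $q\notin\vb$. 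Conversely, if $\si^n(\al(q))\lge\overline{\al(q)}$ for all $n$, then for any $r>q$ one chooses $N$ with $\al_1(q)\ldots\al_N(q)\prec\al_1(r)\ldots\al_N(r)$ and uses the two-sided inequalities for $\al(q)$ to manufacture sequences built from the blocks $\al_1(q)\ldots\al_N(q)$ and $\overline{\al_1(q)\ldots\al_N(q)}$ that lie in $\wus_r\setminus\wus_q$, so $q\in\vb$. The main obstacle in both parts is the necessity direction: in (i) one must control the boundary positions $\al_n(q)\in\{0,M\}$ by ruling out eventually-constant or periodic behaviour of $\al(q)$, and in (ii) one must track precisely how $\al(r)$ and $\overline{\al(r)}$ approach $\al(q)$ as $r\searrow q$ (including the possible one-sided discontinuity of $q\mapsto\al(q)$) in order to make the finite-prefix obstruction argument rigorous.
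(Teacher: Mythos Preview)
The paper does not prove this lemma; it simply cites \cite{Vries-Komornik-Loreti-2016}. Your sketch follows the route of that reference and the overall structure is correct, so in that sense it matches the paper's (non-)proof.

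Two points in your outline deserve tightening. First, in part~(i) the lower bound is not literally ``symmetric'' to the upper bound: Lemma~\ref{lem:quasi-greedy expansion-alpha-q} supplies the a~priori inequality $\si^n(\al(q))\lle\al(q)$ needed to reduce the upper bound to excluding equality, but there is no analogous a~priori inequality $\si^n(\al(q))\lge\overline{\al(q)}$ available for the lower bound. One clean way to close the gap is to use that $\us_q$ is reflection-invariant, so $\overline{\al(q)}\in\us_q$, and then combine \eqref{eq:characterization-unique expansion} for both $\al(q)$ and $\overline{\al(q)}$; the boundary cases $\al_n(q)=0$ then reduce, via the largest earlier index $j<n$ with $\al_j(q)>0$, to forcing $\al_1(q)=\cdots=M$ and hence $q=M+1$. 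Your remark about $\al(q)$ not ending in $0^\f$ is relevant but by itself only excludes equality, not strict inequality. Second, for the converse in part~(ii) your block construction is more elaborate than needed: once $\overline{\al(q)}\lle\si^n(\al(q))\lle\al(q)$ holds for all $n\ge 1$, the sequence $\al(q)$ itself lies in $\wus_r$ for every $r>q$ (since $\al(r)\succ\al(q)$ by Lemma~\ref{lem:quasi-greedy expansion-alpha-q}) but $\al(q)\notin\wus_q$ because $\si^0(\al(q))=\al(q)$ is not strictly below $\al(q)$; this already gives $\wus_r\supsetneq\wus_q$.
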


Clearly, Lemma \ref{lem:characterization of V-U} implies that $\ub\subset\vb$. Furthermore, $\vb\setminus\ub$ is at most countable.  Set
\begin{align*}
 \us(J):=\set{\al(q): q\in\ub\cap (q_L, q_R]}\quad\textrm{and}\quad
\vs(J):=\set{\al(q): q\in\vb\cap (q_L, q_R]}.
\end{align*}
Then $\us(J)\subset\vs(J)$. 
As a consequence of Lemmas \ref{lem:uq-xb} and \ref{lem:characterization of V-U} we have the following.

\begin{proposition} \label{proposition:U(J)-V(J)}
$\us(J)\subset\vs(J)\subset X(J)$. Furthermore, 
\begin{gather*}
\us(J)=\big\{(\sc_i)\in X(J): \overline{(\sc_i)}\prec \si^n((\sc_i))\prec (\sc_i)~\forall n\ge 1\big\},\\
\vs(J)=\big\{(\sc_i)\in X(J): \overline{(\sc_i)}\lle \si^n((\sc_i))\lle (\sc_i)~\forall n\ge 1\big\}. 
\end{gather*}
\end{proposition}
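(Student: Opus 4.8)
The plan is to prove Proposition \ref{proposition:U(J)-V(J)} in two stages: first establish the set inclusions $\us(J)\subset\vs(J)\subset X(J)$, and then prove the lexicographic characterizations of $\us(J)$ and $\vs(J)$. The inclusion $\us(J)\subset\vs(J)$ is immediate since $\ub\subset\vb$ (which follows from Lemma \ref{lem:characterization of V-U}). For $\vs(J)\subset X(J)$: take $q\in\vb\cap(q_L,q_R]$ and write $(\sc_i)=\al(q)$. Since $q\in(q_L,q_R]$, Lemma \ref{lem:quasi-greedy expansion-alpha-q} gives that $\al(q)$ begins with $\sv^+$ and $\al(q)\lle\al(q_R)=\sv^+(\overline{\sv})^\f$. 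By Lemma \ref{lem:characterization of V-U}(ii), $\overline{\al(q)}\lle\si^n(\al(q))\lle\al(q)$ for all $n\ge 1$. The only thing left to verify before applying Lemma \ref{lem:uq-xb} is the lower bound $\si^n((\sc_i))\lge\overline{\sv^+}\sv^\f$ in \eqref{eq:inequality-1}. Here I would use $\si^n((\sc_i))\lge\overline{(\sc_i)}=\overline{\al(q)}\lge\overline{\al(q_R)}=\overline{\sv^+}\sv^\f$ (using $q\ge q_L$ would give the weaker bound $\overline{\sv^\f}$, but since $q\le q_R$ we get the needed inequality from the upper bound on $\al(q)$). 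Thus all hypotheses of Lemma \ref{lem:uq-xb} hold, so $(\sc_i)\in X(J)$.

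For the characterizations, I would prove the two displayed formulas together, since the arguments are parallel (with $\prec$ replacing $\lle$). The forward inclusion ($\subset$) in each case is essentially contained in what was just done: if $(\sc_i)=\al(q)$ with $q\in\ub\cap(q_L,q_R]$ (resp.\ $q\in\vb\cap(q_L,q_R]$), then $(\sc_i)\in X(J)$ by the first part, and the strict (resp.\ non-strict) inequalities $\overline{(\sc_i)}\prec\si^n((\sc_i))\prec(\sc_i)$ hold for all $n\ge 1$ by Lemma \ref{lem:characterization of V-U}(i) (resp.\ (ii)). For the reverse inclusion, suppose $(\sc_i)\in X(J)$ satisfies $\overline{(\sc_i)}\lle\si^n((\sc_i))\lle(\sc_i)$ for all $n\ge 1$. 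Taking $n=1$ and iterating, $\si^n((\sc_i))\lle(\sc_i)$ for all $n\ge 0$; also, since $(\sc_i)\in X(J)$ starts with $\sv^+$ and (by inspection of Figure \ref{fig1} / Lemma \ref{lem:uq-xb}) its blocks force $(\sc_i)\lle\sv^+(\overline{\sv})^\f<M^\f$, the sequence $(\sc_i)$ does not end in $0^\f$ (a point to check: no path in $\mathcal G$ yields a tail of zeros — every state has all outgoing labels either equal to $\sv$, $\sv^+$, $\overline\sv$, or $\overline{\sv^+}$, none of which is $0^m$ as long as $\sv$ is admissible and hence $a_1\ge\overline{a_1}$ means $\sv\ne 0^m$ unless $M=0$, which is excluded). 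Then Lemma \ref{lem:quasi-greedy expansion-alpha-q} produces $q\in(1,M+1]$ with $\al(q)=(\sc_i)$, and since $\sv^\f\prec\al(q)\lle\sv^+(\overline\sv)^\f$ we get $q\in(q_L,q_R]$; finally Lemma \ref{lem:characterization of V-U}(ii) gives $q\in\vb$, so $(\sc_i)\in\vs(J)$. The $\us(J)$ case is identical using the strict inequalities and Lemma \ref{lem:characterization of V-U}(i).

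The main obstacle I anticipate is the bookkeeping at the \emph{boundary cases}: ensuring that the condition $(\sc_i)\in X(J)$ together with $\si^n((\sc_i))\lle(\sc_i)$ genuinely forces $q\in(q_L,q_R]$ rather than $q=q_L$ (i.e.\ $(\sc_i)=\sv^\f$), and that $(\sc_i)$ does not end in $0^\f$ so that Lemma \ref{lem:quasi-greedy expansion-alpha-q} applies cleanly. The first is handled by noting that any element of $X(J)$ begins with the block $\sv^+$, which is lexicographically larger than the prefix $\sv$ of $\sv^\f$, so $(\sc_i)\succ\sv^\f$ automatically and hence $q>q_L$. The second follows from the structure of the graph $\mathcal G$ as noted above. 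A secondary subtlety is making sure the upper bound $\si^n((\sc_i))\lle\sv^+(\overline\sv)^\f$ — which is what Lemma \ref{lem:uq-xb} needs as its right-hand inequality in \eqref{eq:inequality-1} — is actually available; but this is exactly $\si^n((\sc_i))\lle(\sc_i)\lle\al(q_R)$ for $n\ge 1$ combined with the $n=0$ case $(\sc_i)\lle\al(q_R)$, so it is no extra work. Everything else is a routine transcription of Lemmas \ref{lem:uq-xb} and \ref{lem:characterization of V-U}.
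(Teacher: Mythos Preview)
Your proposal is correct and follows exactly the route the paper takes: the paper simply states that the proposition is ``a consequence of Lemmas \ref{lem:uq-xb} and \ref{lem:characterization of V-U}'' without writing out a proof, and what you have written is precisely the detailed unpacking of that consequence. One small remark: your argument that no sequence in $X(J)$ ends in $0^\infty$ only checks $\sv\ne 0^m$, whereas in fact $\overline{\sv^+}$ \emph{can} equal $0^m$ (e.g.\ when $\sv=M^{m-1}(M-1)$); the conclusion is nonetheless correct because $\overline{\sv^+}$ labels only the transition edge $A\to B$ in $\mathcal G$ and hence cannot occur twice in succession.
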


We shall also need the following sets. For $M=1$ we denote by
\begin{align*}
&\ub^*:=\ub\quad\textrm{and}\quad \us^*:=\set{\al^*(q): q\in\ub^*},\\
&\vb^*:=\vb\quad\textrm{and}\quad \vs^*:=\set{\al^*(q): q\in\vb^*}.
\end{align*}
Then by Lemma \ref{lem:characterization of V-U} with $M=1$ it follows that  
\begin{equation}\label{eq:U-star-V-star}
\begin{split}
\us^*\setminus\set{1^\f}&=\left\{(a_i)\in\set{0,1}^\N: (1-a_i)\prec \si^n((a_i))\prec (a_i)~\forall n\ge 1\right\},\\
\vs^*&=\set{(a_i)\in\set{0,1}^\N: (1-a_i)\lle \si^n((a_i))\lle (a_i)~\forall n\ge 1}.
\end{split}
\end{equation}

\subsection{Description of $\Phi_J$}
We now define a map $\phi: \mathcal{L}\to\mathcal{L}^*$ by 
\begin{equation}
\phi(\overline{\sv^+})=\phi(\sv)=0, \qquad\mbox{and} \qquad \phi(\overline{\sv})=\phi(\sv^+)=1.
\label{eq:phi}
\end{equation}
Then $\phi$ induces a block map $\Phi_J: X(J)\ra X^*$ defined by 
\[
\Phi_J((\sd_i)):=\phi(\sd_1)\phi(\sd_2)\ldots.
\]
  
\begin{proposition} \label{prop:chareacterization of Phi-J}
The map $\Phi_J: X(J)\ra X^*$ is strictly increasing and bijective. Furthermore, 
\[
\Phi_J(\us(J))=\us^*\quad\textrm{and}\quad \Phi_J(\vs(J))=\vs^*.
\]
\end{proposition}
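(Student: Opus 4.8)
The plan is to analyze the block map $\Phi_J$ directly from the structure of the labeled graph $\mathcal G$ and its reference $\mathcal G^*$. First I would observe that since $\mathcal G$ is right-resolving with label set $\mathcal L$, every sequence in $X(J)$ corresponds to a unique infinite path in $G$ starting at ``Start'', and likewise every sequence in $X^*$ corresponds to a unique infinite path in $G$ under $\mathcal L^*$. The map $\phi\colon\mathcal L\to\mathcal L^*$ from \eqref{eq:phi} is precisely the relabeling that takes the $\mathcal G$-label of each edge to its $\mathcal G^*$-label; this is well-defined because an inspection of Figure~\ref{fig1} shows that edges carrying the same $\mathcal L$-label carry the same $\mathcal L^*$-label (indeed each of $0,1$ labels two edges, matching $\phi(\overline{\sv^+})=\phi(\sv)=0$ and $\phi(\sv^+)=\phi(\overline{\sv})=1$). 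Consequently $\Phi_J$ simply reads off, for a path in $G$, the $\mathcal L^*$-labeling instead of the $\mathcal L$-labeling, so it is a bijection from the set of infinite paths (i.e.\ $X(J)$) onto $X^*$: injectivity and surjectivity both follow from the fact that both labelings are right-resolving and generate the full path space of the same graph $G$.

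Next I would prove that $\Phi_J$ is strictly increasing. Take $(\sd_i)\neq(\sd_i')$ in $X(J)$ with $(\sd_i)\prec(\sd_i')$, and let $k$ be the first index where they differ, so $\sd_k\prec\sd_k'$ as length-$m$ blocks (using the natural lexicographic order on $\Omega_M$), and $\sd_1\cdots\sd_{k-1}=\sd_1'\cdots\sd_{k-1}'$. Because $\mathcal G$ is right-resolving, $\sd_k$ and $\sd_k'$ are labels of two distinct edges leaving the same vertex $v$ of $G$. Reading off Figure~\ref{fig1}: from vertex $A$ the two outgoing edges carry $\mathcal L$-labels $\overline{\sv}\succ\overline{\sv^+}$ and $\mathcal L^*$-labels $1>0$ respectively; from vertex $B$ the two outgoing edges carry $\mathcal L$-labels $\sv^+\succ\sv$ and $\mathcal L^*$-labels $1>0$; and from ``Start'' there is only one edge. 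In every case, the edge with the lexicographically larger $\mathcal L$-label also has the larger $\mathcal L^*$-label, so $\phi$ is order-preserving on outgoing edges of each vertex; hence $\phi(\sd_k)<\phi(\sd_k')$ while $\phi(\sd_i)=\phi(\sd_i')$ for $i<k$, giving $\Phi_J((\sd_i))\prec\Phi_J((\sd_i'))$. (One should also note that here the lexicographic comparison of $\mathcal L$-blocks never causes a ``carry'' issue because distinct labels leaving a common vertex differ already in their first symbol — or, more simply, because the blocks $\sv,\sv^+,\overline{\sv},\overline{\sv^+}$ are pairwise ordered consistently with the single-symbol differences at the vertices, which follows from $\overline{\sv^+}\prec\overline{\sv}\preccurlyeq\sv\prec\sv^+$, a consequence of admissibility of $\sv$ and Lemma~\ref{lem:quasi-greedy expansion-alpha-q}.)

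Finally I would identify the images of $\us(J)$ and $\vs(J)$. By Proposition~\ref{proposition:U(J)-V(J)}, $\us(J)=\{(\sc_i)\in X(J):\overline{(\sc_i)}\prec\si^n((\sc_i))\prec(\sc_i)\ \forall n\ge1\}$ and similarly for $\vs(J)$ with $\prec$ replaced by $\preccurlyeq$; and by \eqref{eq:U-star-V-star}, the target sets $\us^*\setminus\{1^\f\}$ and $\vs^*$ have exactly the same form with alphabet $\{0,1\}$ and complement $1-a_i$. The key algebraic point is that $\Phi_J$ intertwines the two shift dynamics and the two complementation operations in the appropriate sense: $\Phi_J\circ\si^m=\si\circ\Phi_J$ on $X(J)$ (shifting by one $\mathcal L$-block corresponds to shifting by one $\mathcal L^*$-symbol), and $\phi(\overline{\sd})=1-\phi(\sd)$ for each $\sd\in\mathcal L$ — indeed $\overline{\sv^+}\mapsto0$ and $\sv^+\mapsto1$, $\overline{\sv}\mapsto1$ and $\sv\mapsto0$ — so $\Phi_J(\overline{(\sc_i)})=\overline{\Phi_J((\sc_i))}$ (here the overline on the right is digit-wise complement in $\{0,1\}$). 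Combined with strict monotonicity of $\Phi_J$, these two facts convert each inequality $\overline{(\sc_i)}\prec\si^n((\sc_i))\prec(\sc_i)$ for all $n\ge1$ into the equivalent condition $\overline{\Phi_J((\sc_i))}\prec\si^n(\Phi_J((\sc_i)))\prec\Phi_J((\sc_i))$ for all $n\ge1$ after one checks that it suffices to test the defining inequalities at shifts by multiples of $m$ for sequences in $X(J)$ — which holds because within any length-$m$ block $\sd_j$ of a sequence in $X(J)$, the blocks $\sv,\sv^+,\overline{\sv},\overline{\sv^+}$ already satisfy the required one-sided inequalities among themselves, so no intermediate shift can violate the condition; I would spell this last reduction out carefully as it is the one genuinely nontrivial verification. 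The $\preccurlyeq$ version then handles $\vs(J)$ identically, and we conclude $\Phi_J(\us(J))=\us^*$ (noting $1^\f\notin\us^*$ matches the fact that the corresponding sequence, the image of $(\sv^+)^\f$, is excluded on the left as well — or is handled by the small-$M$ conventions) and $\Phi_J(\vs(J))=\vs^*$.

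The step I expect to be the main obstacle is precisely that last reduction: checking that for sequences lying in $X(J)$, verifying the lexicographic inequalities $\overline{(\sc_i)}\preccurlyeq\si^n((\sc_i))\preccurlyeq(\sc_i)$ at all shifts $n\ge1$ is equivalent to verifying them only at shifts $n$ that are multiples of $m$, so that the block map $\Phi_J$ can be applied coordinate-by-coordinate in the $\mathcal L$-alphabet. This requires using the admissibility of the generating word $\sv$ (Definition~\ref{def:admissible-words}) to control the ``internal'' shifts within the blocks $\sv^+,\overline{\sv^+},\overline{\sv},\sv$, and is where the combinatorics of the specific automaton in Figure~\ref{fig1} genuinely enters.
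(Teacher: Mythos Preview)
Your treatment of bijectivity and strict monotonicity is correct and mirrors the paper's Lemma~\ref{lem:bijective map-Phi}: the key observation that $\phi$ preserves the order on the outgoing labels at each vertex of $G$ is exactly the case split there.

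The gap is in the third part. You correctly flag the reduction from all shifts to shifts by multiples of $m$ as nontrivial (this is (i)$\Leftrightarrow$(ii) of the paper's Lemma~\ref{lem:description-Phi-J}), but you then assume that once reduced to multiples of $m$, ``strict monotonicity of $\Phi_J$'' plus the intertwining relations convert each inequality $\si^{mn}((\sc_i))\prec(\sc_i)$ into $\si^n((x_i))\prec(x_i)$. This does not work: $\si^{mn}((\sc_i))$ generally does not lie in $X(J)$ (its first block need not be $\sv^+$), so $\Phi_J$ is not defined on it, and if you extend $\phi$ coordinate-wise to arbitrary $\mathcal L$-sequences, the extension is \emph{not} order-preserving --- for instance $\overline{\sv}\prec\sv$ as $m$-blocks but $\phi(\overline{\sv})=1>0=\phi(\sv)$. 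The problematic case is precisely $\sc_{n+1}=\overline{\sv}$: on the $X(J)$-side the inequality $\si^{mn}((\sc_i))\prec(\sd_i)$ is vacuous (first blocks already satisfy $\overline{\sv}\prec\sv^+$), giving no further information, yet on the $X^*$-side one has $x_{n+1}=\phi(\overline{\sv})=1=y_1$, so a genuine comparison deeper in the sequences is needed.

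The paper resolves this in Lemma~\ref{lem:description-Phi-J}, (ii)$\Rightarrow$(iii): when $\sc_{n+1}=\overline{\sv}$, one backtracks to the nearest index $j<n$ with $\sc_{j+1}=\sv^+$ (so $\sc_{j+1}\ldots\sc_{n+1}=\sv^+(\overline{\sv})^{n-j}$), applies the inequality at shift $mj$ --- where $\si^{mj}((\sc_i))\in X(J)$ and monotonicity \emph{is} available --- and then reads off explicitly where the image sequences first differ. This argument uses the whole family of inequalities in (ii), not just the single one at shift $mn$; a plan to convert each inequality individually via monotonicity cannot succeed.

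A minor slip at the end: $1^\f$ \emph{does} belong to $\us^*$ (it is $\al^*(2)$), and its preimage $\Phi_J^{-1}(1^\f)$ is $\sv^+(\overline{\sv})^\f=\al(q_R)\in\us(J)$, not $(\sv^+)^\f$, which is not even in $X(J)$ since from vertex $A$ no edge carries label $\sv^+$.
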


First we verify that $\Phi_J$ is a bijection.

\begin{lemma} \label{lem:bijective map-Phi}
The map $\Phi_J: X(J)\ra X^*$ is strictly increasing and bijective. 
\end{lemma}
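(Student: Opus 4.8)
The plan is to show that $\Phi_J$ is a bijection by exhibiting an explicit inverse, built from the structure of the automaton $\GG$ in Figure \ref{fig1}, and that it is strictly increasing by a careful comparison of the four labels in $\mathcal L$ against the corresponding labels in $\mathcal L^*$.

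\textbf{Bijectivity.} First I would observe that the underlying graph $G$ is the same for $\GG$ and $\GG^*$, and that both labelings are right-resolving. Hence a path in $G$ starting from the ``Start'' vertex is uniquely determined by its label sequence in $\mathcal L$, and likewise uniquely determined by its label sequence in $\mathcal L^*$; this gives canonical bijections between $X(J)$, the set of admissible paths, and $X^*$. Concretely, I would define the inverse map $\Psi_J\colon X^*\to X(J)$ as follows: given $(x_i)\in X^*$ (so $x_1=1$), read off the unique path in $G$ from ``Start'' whose $\mathcal L^*$-label sequence is $(x_i)$, then record its $\mathcal L$-label sequence $(\sd_i)$, and set $\Psi_J((x_i))=(\sd_1\sd_2\cdots)$ as an infinite concatenation of length-$m$ blocks. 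That such a path exists and is unique for every $(x_i)\in X^*$ follows because the ``Start'' vertex has a single outgoing edge (labeled $1$, i.e. $\sv^+$), and each of the vertices $A$ and $B$ has exactly one outgoing edge labeled $0$ and exactly one labeled $1$; so at every step the next symbol $x_{i+1}\in\{0,1\}$ selects a unique edge. One then checks directly from the definition \eqref{eq:phi} of $\phi$ that $\Phi_J\circ\Psi_J=\mathrm{id}_{X^*}$ and $\Psi_J\circ\Phi_J=\mathrm{id}_{X(J)}$: applying $\phi$ to the $\mathcal L$-label of an edge recovers exactly the $\mathcal L^*$-label of the same edge, since $\phi(\sv^+)=\phi(\overline\sv)=1$ covers the two edges into $A$ and $\phi(\sv)=\phi(\overline{\sv^+})=0$ covers the two edges into $B$.

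\textbf{Monotonicity.} To prove $\Phi_J$ is strictly increasing, take $(\sd_i),(\sd'_i)\in X(J)$ with $(\sd_i)\prec(\sd'_i)$, and let $n$ be the first index at which they differ, so $\sd_1\cdots\sd_{n-1}=\sd'_1\cdots\sd'_{n-1}$ and $\sd_n\neq\sd'_n$. Since both sequences follow paths in $G$ that agree for the first $n-1$ steps, after step $n-1$ they sit at the same vertex $v\in\{A,B\}$ (here I use that both paths start at ``Start'' and that $G$ is deterministic under the $\mathcal L$-labeling, which is part of right-resolvingness). From vertex $A$ the two possible labels are $\overline\sv$ and $\overline{\sv^+}$, and since $\sv^+=\sv$ followed by incrementing the last digit, we have $\overline{\sv^+}\prec\overline\sv$ (the bar reverses the order); from vertex $B$ the two possible labels are $\sv$ and $\sv^+$ with $\sv\prec\sv^+$. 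In either case the lexicographically smaller of the two available $\mathcal L$-labels is precisely the one whose $\phi$-image is $0$, and the larger is the one whose $\phi$-image is $1$. Therefore $\sd_n\prec\sd'_n$ forces $\phi(\sd_n)=0<1=\phi(\sd'_n)$, so $\Phi_J((\sd_i))$ and $\Phi_J((\sd'_i))$ agree on their first $n-1$ coordinates and differ with a smaller digit in position $n$ for the first sequence; hence $\Phi_J((\sd_i))\prec\Phi_J((\sd'_i))$. Combined with bijectivity (so that distinct inputs give distinct outputs and no collapsing occurs), this yields that $\Phi_J$ is strictly increasing, proving Lemma \ref{lem:bijective map-Phi}.

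\textbf{Main obstacle.} The routine but slightly delicate point is the bookkeeping that a sequence in $X(J)$ really does correspond to a well-defined vertex of $G$ after each \emph{block} of length $m$ (as opposed to after each single symbol), so that comparing two elements of $X(J)$ reduces cleanly to comparing the finite blocks $\sd_n$ and $\sd'_n$ at the first block-position where they differ. This is where Lemma \ref{lem:uq-xb} and the right-resolving property do the real work; once that is in place, the order-preservation statement is immediate from the inequalities $\overline{\sv^+}\prec\overline\sv$ and $\sv\prec\sv^+$ together with $\phi$. I do not expect the claims $\Phi_J(\us(J))=\us^*$ and $\Phi_J(\vs(J))=\vs^*$ in Proposition \ref{prop:chareacterization of Phi-J} to be needed for Lemma \ref{lem:bijective map-Phi} itself; they will follow afterward by pushing the lexicographic characterizations in Proposition \ref{proposition:U(J)-V(J)} and \eqref{eq:U-star-V-star} through the order-isomorphism $\Phi_J$, using that $\Phi_J$ commutes appropriately with the shift on blocks and with the bar operation (i.e. $\Phi_J\circ\sigma^m=\sigma\circ\Phi_J$ on $X(J)$, and $\phi(\overline{\sd})=1-\phi(\sd)$).
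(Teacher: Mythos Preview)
Your proof is correct and follows essentially the same approach as the paper: both arguments use the right-resolving property of $\GG$ and $\GG^*$ (sharing the same underlying graph $G$) to obtain bijectivity, and both establish monotonicity by locating the first block where the two sequences differ and observing that at either vertex $A$ or $B$ the lexicographically smaller outgoing $\mathcal L$-label is exactly the one with $\phi$-image $0$. Your write-up is somewhat more explicit (constructing $\Psi_J$ by hand), but the mathematical content matches the paper's proof; note that Lemma~\ref{lem:uq-xb} is not actually needed here, since the block-by-block vertex tracking is part of the \emph{definition} of $X(J)$.
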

	
\begin{proof}
Note by Definition \ref{def:admissible-words} that the blocks in $\mathcal{L}$ are ordered by $\overline{\sv^+}\prec\overline{\sv}\prec\sv\prec\sv^+$.
Take two sequences $(\sc_i), (\sd_i)\in X(J)$ with $(\sc_i)\prec (\sd_i)$. Then $\sc_1=\sd_1=\sv^+$, and there is an integer $k\ge 2$ such that $\sc_1\ldots \sc_{k-1}=\sd_1\ldots \sd_{k-1}$ and $\sc_k\prec \sd_k$.  We will show that $\phi(\sc_k)<\phi(\sd_k)$. To this end we consider two cases (see Figure \ref{fig1}):
\begin{itemize}
\item[(I).] If $\sc_{k-1}=\sv^+$ or $\overline{\sv}$, then $\sc_k=\overline{\sv^+}$ and $\sd_k=\overline{\sv}$, and so $\phi(\sc_k)=0$ and $\phi(\sd_k)=1$.

\item[(II).] If $\sc_{k-1}=\sv$ or $\overline{\sv^+}$, then $\sc_k=\sv$ and $\sd_k=\sv^+$, so again $\phi(\sc_k)=0$ and $\phi(\sd_k)=1$. 
\end{itemize} 
Thus, $\Phi_J$ is strictly increasing on $X(J)$. Finally, since the labeled graphs $\mathcal G$ and $\mathcal G^*$ are both right-resolving, the definitions of $X(J)$ and $X^*$ imply that $\Phi_J$ is bijective. 
\end{proof}

\begin{lemma} \label{lem:description-Phi-J}
The following statements are equivalent for sequences $(\sc_i), (\sd_i)\in X(J)$. 
\begin{enumerate}[{\rm(i)}]
\item 
$
\overline{(\sd_i)}\prec\si^n((\sc_i))\prec (\sd_i)~~\forall n\ge 0.
$
\item 
$
\overline{(\sd_i)}\prec\si^{mn}((\sc_i))\prec (\sd_i)~~\forall n\ge 0.
$
\item 
The image sequences $(x_i):=\Phi_J((\sc_i)), (y_i):=\Phi_J((\sd_i))$ in $X^*$ satisfy
\[
{(1-y_i)}\prec \si^n((x_i))\prec (y_i)~~\forall n\ge 0.
\]
\end{enumerate}
\end{lemma}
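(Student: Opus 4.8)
The plan is to prove the chain of equivalences (i) $\Leftrightarrow$ (ii) $\Leftrightarrow$ (iii) by going around the triangle in the order (i) $\Rightarrow$ (ii) $\Rightarrow$ (iii) $\Rightarrow$ (i). The implication (i) $\Rightarrow$ (ii) is trivial, since restricting a quantifier over all $n\ge 0$ to multiples of $m$ can only weaken the hypothesis; the real content is (ii) $\Rightarrow$ (iii) and (iii) $\Rightarrow$ (i).

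For (ii) $\Rightarrow$ (iii): both $(\sc_i)$ and $(\sd_i)$ lie in $X(J)$, so each is an infinite concatenation of the length-$m$ blocks $\sv,\sv^+,\overline{\sv},\overline{\sv^+}$; write $(\sc_i)=\sc_{(1)}\sc_{(2)}\cdots$ and $(\sd_i)=\sd_{(1)}\sd_{(2)}\cdots$ with each $\sc_{(k)},\sd_{(k)}\in\mathcal L$, and note $(x_i)=\phi(\sc_{(1)})\phi(\sc_{(2)})\cdots$, $(y_i)=\phi(\sd_{(1)})\phi(\sd_{(2)})\cdots$. The key observation is that $\Phi_J$ is not just increasing (Lemma \ref{lem:bijective map-Phi}) but respects complementation in the sense that $\phi(\overline{\sd})=1-\phi(\sd)$ for every $\sd\in\mathcal L$, which is immediate from \eqref{eq:phi} since $\{\sv,\sv^+\}$ map to $\{0,1\}$ and their complements $\{\overline{\sv^+},\overline{\sv}\}$ map to $\{0,1\}$ in reversed order. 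Hence $\overline{(\sd_i)}\in X(J)$ as well (it is the path through $\mathcal G$ obtained by the obvious symmetry), and $\Phi_J(\overline{(\sd_i)})=(1-y_i)$. Applying the strict monotonicity of $\Phi_J$ to each of the two inequalities in (ii), written at the block level as $\overline{(\sd_i)}\prec\si^{mn}((\sc_i))\prec(\sd_i)$, and using that $\si^{mn}((\sc_i))=\sc_{(n+1)}\sc_{(n+2)}\cdots\in X(J)$ so that $\Phi_J(\si^{mn}((\sc_i)))=\si^n((x_i))$, we get exactly $(1-y_i)\prec\si^n((x_i))\prec(y_i)$ for all $n\ge 0$, which is (iii). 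One subtlety to check carefully: $\Phi_J$ is only defined on $X(J)$, so we must confirm all three sequences appearing ($\si^{mn}((\sc_i))$, $(\sd_i)$, $\overline{(\sd_i)}$) genuinely lie in $X(J)$; the first two are obvious, the third follows from the complementation symmetry of $\mathcal G$.

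For (iii) $\Rightarrow$ (i): here we must upgrade a condition on shifts by multiples of $m$ back to a condition on arbitrary shifts, which is the main obstacle. Suppose (iii) holds but (i) fails, so there is some $n$ with $\si^n((\sc_i))\succeq(\sd_i)$ or $\si^n((\sc_i))\preceq\overline{(\sd_i)}$; by the complementation symmetry (replace $(\sc_i)$ by its complement, which is again in $X(J)$, noting both hypotheses are symmetric under this) it suffices to rule out $\si^n((\sc_i))\succeq(\sd_i)$. Write $n=mk+r$ with $0\le r<m$. The case $r=0$ contradicts (iii) directly via $\Phi_J$ as above. For $0<r<m$: here I would argue at the block level. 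Since $(\sc_i)$ is a concatenation of blocks from $\mathcal L$ and $(\sd_i)$ begins with $\sv^+=a_1\dots a_{m-1}(a_m+1)$, the assumption $\si^n((\sc_i))\succeq(\sd_i)$ forces a length-$m$ window of $(\sc_i)$, straddling a block boundary at offset $r$, to be $\lge \sv^+$ in its first $m$ symbols. Using the admissibility inequalities \eqref{eq:admissible word} for $\sv$ (which control all internal sub-blocks of $\sv$, $\sv^+$ and their complements against prefixes of $\sv$), one shows this is impossible: no length-$m$ window of a concatenation of $\{\sv,\sv^+,\overline{\sv},\overline{\sv^+}\}$ that is not block-aligned can be $\lge\sv^+$. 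This is the same kind of "the only large windows are block-aligned" argument that underlies Lemma \ref{lem:uq-xb} and Figure \ref{fig1}, and it is where the admissibility of $\sv$ is essential. Once non-aligned windows are excluded, only $r=0$ remains, and that case is handled, completing the cycle.

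I expect the bookkeeping in the (iii) $\Rightarrow$ (i) step — carefully translating the lexicographic comparison of a misaligned window with $\sv^+$ or $\overline{\sv^+}$ into a violation of \eqref{eq:admissible word} — to be the only genuinely delicate point; the rest is a clean application of the monotonicity and complementation-equivariance of $\Phi_J$ established in Lemmas \ref{lem:bijective map-Phi} and \ref{lem:uq-xb}.
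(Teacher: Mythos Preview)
Your cycle (i)$\Rightarrow$(ii)$\Rightarrow$(iii)$\Rightarrow$(i) contains a genuine gap in the step (ii)$\Rightarrow$(iii). You assert that $\sigma^{mn}((\sc_i))=\sc_{(n+1)}\sc_{(n+2)}\cdots\in X(J)$ and that $\overline{(\sd_i)}\in X(J)$, calling the first ``obvious'' and the second a consequence of ``complementation symmetry of $\mathcal G$''. Both claims are false. By construction, every element of $X(J)$ begins at the Start vertex and hence has first block $\sv^+$; the shifted sequence $\sigma^{mn}((\sc_i))$ begins with $\sc_{(n+1)}$, which may be any of $\sv,\sv^+,\overline{\sv},\overline{\sv^+}$, and $\overline{(\sd_i)}$ begins with $\overline{\sv^+}$. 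The Start vertex breaks the reflection symmetry you invoke.

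This matters because the strict monotonicity of $\Phi_J$ (Lemma~\ref{lem:bijective map-Phi}) is proved only on $X(J)$, and does not extend to arbitrary block concatenations: since $\phi(\overline{\sv})=\phi(\sv^+)=1$, two distinct paths (one beginning $\overline{\sv}$, the other $\sv^+$) can have identical images under the block-wise map. Concretely, when $\sc_{(n+1)}=\overline{\sv}$ you have $x_{n+1}=1=y_1$, so the desired strict inequality $\sigma^n((x_i))\prec(y_i)$ cannot be read off from monotonicity alone. The paper isolates exactly this case and handles it by backtracking: one locates $j<n$ with $\sc_{(j+1)}\ldots\sc_{(n+1)}=\sv^+(\overline{\sv})^{n-j}$, then uses (ii) at index $j$ together with $(\sd_i)\preceq\sv^+(\overline{\sv})^\infty$ to find the first position where $(\sc_i)$ drops to $\overline{\sv^+}$ while $(\sd_i)$ is still at $\overline{\sv}$, forcing the strict image inequality. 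Your outline omits precisely this argument.

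By contrast, your (iii)$\Rightarrow$(i) sketch is sound and matches the paper's route: for block-aligned shifts, a violation of the upper inequality would force $\sc_{(k+1)}=\sv^+$ (since $\sd_1=\sv^+$ is maximal in $\mathcal L$), so $\sigma^{mk}((\sc_i))$ genuinely lies in $X(J)$ and strict monotonicity applies; for misaligned shifts, the admissibility inequalities \eqref{eq:admissible word} do indeed bound every off-block window strictly between $\overline{\sv^+}$ and $\sv^+$. The paper packages this as (i)$\Leftrightarrow$(ii) and then proves (ii)$\Rightarrow$(iii) and (iii)$\Rightarrow$(ii) separately, which localizes the delicate case analysis to the correct implication.
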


\begin{proof}
Since $\sv=a_1\ldots a_m$ is admissible, Definition \ref{def:admissible-words} implies
\begin{equation*}
\overline{a_1\ldots a_m^+}\prec a_{i+1}\ldots a_m^+\overline{a_1\ldots a_i}\prec a_1\ldots a_m^+
\end{equation*}
and
\begin{equation*}
\overline{a_1\ldots a_m^+}\prec a_{i+1}\ldots a_m a_1\ldots a_i\prec a_1\ldots a_m^+
\end{equation*}
for all $1\le i<m$. Using $\sd_1=\sv^+=a_1\dots a_m^+$ and $(\sc_i)\in X(J)$ this proves the equivalence (i) $\Leftrightarrow$ (ii).

Next, we prove (ii) $\Rightarrow$ (iii).  We only verify the second inequality in (iii); the first one can be proved in the same way. Take $(\sc_i), (\sd_i)\in X(J)$ satisfying the inequalities in (ii), and let $(x_i):=\Phi_J((\sc_i))$, $(y_i):=\Phi_J((\sd_i))$. 
Fix $n\geq 0$. If $\sc_{n+1}\in\{\overline{\sv^+}, \sv\}$, then ${x}_{n+1}=\phi(\sc_{n+1})=0<1=\phi(\sd_1)=y_1$, using that $\sd_1=\sv^+$. Furthermore, if $\sc_{n+1}=\sv^+$, then $\sigma^{mn}((\sc_i))\in X(J)$ and the second inequality in (iii) follows from Lemma \ref{lem:bijective map-Phi}. Therefore, the critical case is when $\sc_{n+1}=\overline{\sv}$, which we assume for the remainder of the proof.

{Since $(\sc_i)\in X(J)$,  there is $0\le j<n$ such that $\sc_{j+1}\ldots \sc_{n+1}=\sv^+(\overline{\sv})^{n-j}$. (See Figure \ref{fig1}.) Furthermore, since $\sc_{j+1}\sc_{j+2}\ldots \prec \sd_1\sd_2\ldots\lle \sv^+(\overline{\sv})^\f$, there is a number $k\geq n-j$ such that
\begin{equation}
\sc_{j+1}\dots \sc_{j+k+2}=\sv^+(\overline{\sv})^k\overline{\sv^+}, \qquad\mbox{and} \qquad \sd_1\dots \sd_{k+1}=\sv^+(\overline{\sv})^k.
\label{c-and-d}
\end{equation}
The second equality in \eqref{c-and-d} yields $y_1\dots y_{k+1}=\phi(\sd_1)\ldots\phi(\sd_{k+1})=1^{k+1}$, and the first equality implies 
\[
\sc_{n+1}\dots \sc_{j+k+2}=(\overline{\sv})^{k-(n-j)+1}\overline{\sv^+}.
\]
Hence,
\[
x_{n+1}\dots x_{j+k+2}=\phi(\sc_{n+1})\ldots \phi(\sc_{j+k+2})=1^{k-(n-j)+1}0\prec 1^{k-(n-j)+2}=y_1\dots y_{k-(n-j)+2},
\]
since $j<n$ implies $k-(n-j)+2\leq k+1$. Therefore, $\sigma^n((x_i))\prec (y_i)$, which gives the second inequality in (iii).
}

Finally, we prove (iii) $\Rightarrow$ (ii). First we verify the second inequality of (ii). Let $({x}_i), (y_i)\in X^*$ and let $(\sc_i), (\sd_i)\in X(J)$ such that $\Phi_J((\sc_i))=({x}_i), \Phi_J((\sd_i))=(y_i)$. Fix $n\ge 0$. We may assume $\sc_{n+1}=\sv^+$, as otherwise the inequality is trivial. But then $\sc_{n+1}\sc_{n+2}\ldots \in X(J)$, and since ${x}_{n+1}{x}_{n+2}\ldots\prec y_1 y_2\ldots$ it follows from Lemma \ref{lem:bijective map-Phi} that $\sc_{n+1}\sc_{n+2}\ldots \prec \sd_1\sd_2\ldots$. This proves the second inequality of (ii). The first inequality is verified analogously. 
\end{proof}

\begin{proof}[Proof of Proposition \ref{prop:chareacterization of Phi-J}]
In view of Lemma \ref{lem:bijective map-Phi} it remains to prove 
\[\Phi_J(\us(J))=\us^*\quad \textrm{and}\quad \Phi_J(\vs(J))=\vs^*.\]
Since the proof of the second equality is similar, we only prove the first one. 

Let $(\sc_i)\in\us(J)$, and $(x_i):=\Phi_J((\sc_i))$. Then by Proposition \ref{proposition:U(J)-V(J)} it follows that 
\[
\sc_1=\sv^+,\quad\textrm{and}\quad \overline{(\sc_i)}\prec \si^n((\sc_i))\prec (\sc_i)~\forall n\ge 1.
\] 
By Lemma \ref{lem:description-Phi-J} with $(\sc_i)=(\sd_i)$ this is equivalent to 
\[
x_1=1,\quad\textrm{and}\quad (1-x_i)\prec \si^n((x_i))\prec (x_i)~\forall n\ge 1.
\] 
So, by (\ref{eq:U-star-V-star}) we have $(x_i)\in\us^*$, and thus $\Phi_J(\us(J))\subseteq \us^*$.

Conversely, take $(x_i)\in\us^*\subset X^*$. By Lemma \ref{lem:bijective map-Phi} there exists a (unique) sequence $(\sc_i)\in X(J)$ such that $\Phi_J((\sc_i))=(x_i)$. If $(x_i)=1^\f$, then $(\sc_i)=\sv^+(\overline{\sv})^\f=\al(q_R)\in\us(J)$. If $(x_i)\in\us^*\setminus\set{1^\f}$, then by (\ref{eq:U-star-V-star}), Lemma  \ref{lem:description-Phi-J}  and the same argument as above it follows that $(\sc_i)\in\us(J)$. Hence, $\Phi_J(\us(J))=\us^*$.
\end{proof}

 \begin{figure}[h!]
  \centering
\usetikzlibrary{matrix}
 
\begin{tikzpicture}[scale=2cm]
  \matrix (m) [matrix of math nodes,row sep=3em,column sep=4em,minimum width=2em]
  {
    \vb\cap(q_L, q_R] & \vb^* \\
     \vs(J) & \vs^* \\};
  \path[-stealth]
    (m-1-1) edge node [left] {$\al$} (m-2-1)
            edge   node [above] {$\hat\Phi_J$} (m-1-2)
    (m-2-1.east|-m-2-2) edge  node [above] {$\Phi_J$} (m-2-2)
    (m-2-2) edge node [right] {$(\al^*)^{-1}$} (m-1-2);
\end{tikzpicture}
\caption{The figure of the exchange map between $\hat\Phi_J$ and $\Phi_J$.}\label{figure:2}.
\end{figure}

\subsection{Description of the induced map $\hat\Phi_J$}
Recall from Proposition \ref{proposition:U(J)-V(J)} that $\vs(J)\subset X(J)$. Hence Proposition \ref{prop:chareacterization of Phi-J} implies that the bijective map $\Phi_J: \vs(J)\ra \vs^*$ induces an increasing bijective map (see Figure \ref{figure:2})
\[
\hat\Phi_J: \vb\cap(q_L, q_R]\ra \vb^*;\qquad q\mapsto (\al^*)^{-1}\circ\Phi_J\circ\al(q).
\]

The relevance of the map $\hat{\Phi}_J$ is made clear by the following proposition. Here, for $M=1$ and $q\in(1,2]$ we write $\wus_q^*:=\wus_q$. 

\begin{proposition} \label{th:characterization of hat-Phi-J} \mbox{}

\begin{enumerate}
[{\rm(i)}]

\item $\hat\Phi_J: \vb\cap(q_L, q_R]\ra \vb^*$ is a strictly increasing homeomorphism.

\item $\hat\Phi_J(\ub\cap(q_L, q_R])=\hat\Phi_J(\ub\cap J)=\ub^*$. 


\item For any $q\in\vb\cap(q_L, q_R]$ and $\hat q:=\hat\Phi_J(q)$ we have
\[
\Phi_J\big(\wus_q(J)\big)=\big\{(x_i)\in\wus_{\hat q}^*: x_1=1\big\} \qquad \textrm{and} \qquad  h(\wus_q(J))=\frac{h(\wus_{\hat q}^*)}{m}.
\]
\end{enumerate}
\end{proposition}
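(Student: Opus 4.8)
The plan is to derive all three parts of the proposition from Proposition~\ref{prop:chareacterization of Phi-J} together with the commutative diagram in Figure~\ref{figure:2}, treating the homeomorphism claim, the characterization of $\ub$, and the transfer of $\wus_q(J)$ as successive refinements.

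First I would establish (i). The map $\hat\Phi_J=(\al^*)^{-1}\circ\Phi_J\circ\al$ is a composition of three strictly increasing bijections: $\al$ is strictly increasing from $\vb\cap(q_L,q_R]$ onto $\vs(J)$ by Lemma~\ref{lem:quasi-greedy expansion-alpha-q}, $\Phi_J$ is strictly increasing from $\vs(J)$ onto $\vs^*$ by Proposition~\ref{prop:chareacterization of Phi-J} and Lemma~\ref{lem:bijective map-Phi}, and $(\al^*)^{-1}$ is strictly increasing from $\vs^*$ onto $\vb^*$ again by Lemma~\ref{lem:quasi-greedy expansion-alpha-q} (with $M=1$). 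Hence $\hat\Phi_J$ is a strictly increasing bijection. To upgrade this to a homeomorphism, I would invoke the standard fact that a strictly monotone bijection between subsets of $\R$ is automatically a homeomorphism onto its image, provided one checks that the image is actually an interval-like set or, more simply, that both $\hat\Phi_J$ and its inverse preserve convergent sequences. Since $q\mapsto\al(q)$ is left-continuous and the symbolic metric $\rho$ is compatible with the product topology, a sequence $q_n\to q$ in $\vb\cap(q_L,q_R]$ forces $\al(q_n)\to\al(q)$ coordinatewise (after passing to one-sided limits and using monotonicity), $\Phi_J$ is continuous for $\rho$ because it is a block (sliding-block) map, and $(\al^*)^{-1}$ is continuous on $\vs^*$; the same argument run backwards handles the inverse.

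Next, (ii) follows by combining Proposition~\ref{prop:chareacterization of Phi-J} with Lemma~\ref{lem:quasi-greedy expansion-alpha-q} on both sides. By definition $\al(\ub\cap(q_L,q_R])=\us(J)$ and $\al^*(\ub^*)=\us^*$; Proposition~\ref{prop:chareacterization of Phi-J} gives $\Phi_J(\us(J))=\us^*$; therefore $\hat\Phi_J(\ub\cap(q_L,q_R])=(\al^*)^{-1}(\Phi_J(\us(J)))=(\al^*)^{-1}(\us^*)=\ub^*$. The equality $\ub\cap(q_L,q_R]=\ub\cap J$ holds because $q_L=\al^{-1}(\sv^\f)\notin\ub$ (its quasi-greedy expansion is periodic, violating the strict inequalities of Lemma~\ref{lem:characterization of V-U}(i)), so adjoining the left endpoint changes nothing.

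Finally, for (iii) I would fix $q\in\vb\cap(q_L,q_R]$ and $\hat q=\hat\Phi_J(q)$, so $\al^*(\hat q)=\Phi_J(\al(q))$. For the first identity, note that $\wus_q(J)$ consists of the sequences $(\sc_i)\in X(J)$ (by Proposition~\ref{prop:property of Uq(J)}) with $\sc_1=\sv^+$ and $\overline{\al(q)}\prec\si^n((\sc_i))\prec\al(q)$ for all $n\ge0$, where by admissibility we may replace ``$\prec$'' with ``$\lle$'' harmlessly at the appropriate places and, crucially, reduce the shift range to multiples of $m$ via the equivalence (i)$\Leftrightarrow$(ii) of Lemma~\ref{lem:description-Phi-J}. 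Then the equivalence (ii)$\Leftrightarrow$(iii) of that lemma, applied with $(\sd_i)=\al(q)$, translates these conditions exactly into $(1-\al_i^*(\hat q))\prec\si^n((x_i))\prec\al^*(\hat q)$ with $(x_i)=\Phi_J((\sc_i))$, together with $x_1=1$; by the lexicographic characterization \eqref{eq:def-widetilde-uq} (for $M=1$) this is precisely the condition $(x_i)\in\wus_{\hat q}^*$ with $x_1=1$. Since $\Phi_J$ is a bijection $X(J)\to X^*$, this yields $\Phi_J(\wus_q(J))=\{(x_i)\in\wus_{\hat q}^*:x_1=1\}$. For the entropy identity, observe that $\Phi_J$ is an $m$-block code collapsing blocks of length $m$ to single symbols, so a length-$n$ window in a sequence of $X^*$ pulls back to a length-$mn$ window in $X(J)$ and conversely, giving $\#B_{mn}(X(J))\asymp\#B_n(X^*)$ up to bounded multiplicative factors coming from the (finitely many) boundary effects; restricting to $\wus_q(J)$ and $\{(x_i)\in\wus_{\hat q}^*:x_1=1\}$ the same comparison holds, and the extra constraint $x_1=1$ does not affect entropy. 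Taking $\liminf\frac{1}{mn}\log(\cdot)$ yields $h(\wus_q(J))=\frac{1}{m}h(\wus_{\hat q}^*)$.

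The main obstacle I anticipate is the careful bookkeeping in (iii): one must verify that the passage between the full shift-orbit condition ($\forall n\ge0$) and the $m$-periodic shift condition ($\forall n$ of the form $mn$) is legitimate for the specific sequence $(\sd_i)=\al(q)$ arising here — this is where admissibility of $\sv$ and the precise placement of strict versus non-strict inequalities matter — and that the definitions \eqref{eq:U_q(J)} and \eqref{eq:def-widetilde-uq} match up with the hypotheses of Lemma~\ref{lem:description-Phi-J} without an off-by-one error in the indexing of blocks. The homeomorphism claim in (i) is comparatively routine once one commits to the sequential-continuity argument.
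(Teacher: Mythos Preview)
Your arguments for (ii) and (iii) are correct and match the paper's proof almost verbatim: (ii) follows from Proposition~\ref{prop:chareacterization of Phi-J} together with $q_L\notin\ub$, and (iii) is exactly the chain of equivalences obtained from Lemma~\ref{lem:description-Phi-J} with $(\sd_i)=\al(q)$, followed by the observation that $\Phi_J$ is a bijective $m$-block map and that $\wus_{\hat q}^*$ splits as $\wus_{\hat q}^*(1)$ and its reflection.

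For (i), however, your route diverges from the paper's and contains a gap. The paper does \emph{not} argue continuity of $\hat\Phi_J$ through the factors $\al$, $\Phi_J$, $(\al^*)^{-1}$; instead it proves Lemma~\ref{lem:Holder continuity of hatPhi} directly, obtaining two-sided H\"older estimates for $\hat\Phi_J$ that immediately give the homeomorphism. Your sequential argument hinges on the claim that ``$q_n\to q$ in $\vb$ forces $\al(q_n)\to\al(q)$'', but $\al$ is only left-continuous: at any $q\in\vb\setminus\ub$ the expansion $\al(q)$ is periodic, say $(c_1\ldots c_k)^\infty$, while for $r>q$ one has $\al(r)$ beginning with $c_1\ldots c_k^+$, so $\al$ jumps from the right. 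Your approach can be rescued by showing that every such $q$ is isolated from the right in $\vb$ (so no right-approaching sequence exists), but this is a nontrivial structural fact you do not address, and the parenthetical ``after passing to one-sided limits and using monotonicity'' does not cover it. The same issue recurs for $(\al^*)^{-1}$ on $\vs^*$.

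There is also a pragmatic reason to prefer the paper's route: Lemma~\ref{lem:Holder continuity of hatPhi} is needed later anyway (in Lemmas~\ref{lem:f-minus-bridge}, \ref{lem:f-plus-bridge} and the proof of Theorem~\ref{main-b}(iv)) to transfer Hausdorff dimensions between $J$ and $(1,2]$, so the H\"older estimate is not extra work but an essential tool, and (i) comes for free once it is in hand.
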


\begin{remark}
In the special case when $M=1$, Proposition \ref{th:characterization of hat-Phi-J}(ii) implies that $\ub$ can be viewed as an attractor of an inhomogeneous infinite iterated function system: Since $\ub^*=\ub$ in this case, we can write
\[
\ub=\bigcup_{i=1}^\infty \hat{\Phi}_{J_i}^{-1}(\ub)\cup \big(\bb\cup \{q_{KL}\}\big),
\]
using \eqref{eq:relation-entropy plateaus-bifurcation set} and the definition of $J_i$.
\end{remark}

Part (i) of Proposition \ref{th:characterization of hat-Phi-J} follows from the following lemma, which proves something stronger: it implies H\"older properties of the maps $\hat{\Phi}_J$ and $\hat\Phi_J^{-1}$. These will be important later for Hausdorff dimension calculations.

\begin{lemma} \label{lem:Holder continuity of hatPhi}
There exist constants $c_1, c_2>0$ such that for any $q_1, q_2\in\vb\cap(q_L, q_R]$ with $q_1<q_2$ we have   
\begin{equation}\label{eq:continuity-hat-Phi}
c_1(q_2-q_1)^{\frac{\log \hat q_2}{m\log q_2}}\le \hat\Phi_J(q_2)-\hat\Phi_J(q_1) \le c_2 (q_2-q_1)^{\frac{\log \hat q_2}{m\log q_2}},
\end{equation}
where $\hat q_i:=\hat\Phi_J(q_i)$ for $i=1, 2$.  
\end{lemma}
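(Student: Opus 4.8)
The plan is to establish \eqref{eq:continuity-hat-Phi} by relating the increments $q_2-q_1$ on the one side and $\hat q_2-\hat q_1$ on the other to the symbolic distance between the quasi-greedy expansions $\alpha(q_1),\alpha(q_2)$ and their images $\alpha^*(\hat q_1),\alpha^*(\hat q_2)$. The key observation is that $\Phi_J$ is, by construction, a ``block-wise relabelling'': if $\alpha(q_1)$ and $\alpha(q_2)$ first differ in the $k$-th block of length $m$ (so they agree on the first $(k-1)m$ digits but differ somewhere in positions $(k-1)m+1,\dots,km$), then, by Lemma~\ref{lem:bijective map-Phi} and the right-resolving property of $\mathcal G$ and $\mathcal G^*$, the images $\alpha^*(\hat q_1)=\Phi_J(\alpha(q_1))$ and $\alpha^*(\hat q_2)=\Phi_J(\alpha(q_2))$ agree on exactly the first $k-1$ digits and differ in the $k$-th digit. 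So the symbolic distances satisfy $\rho(\alpha(q_1),\alpha(q_2))\asymp 2^{-km}$ and $\rho(\alpha^*(\hat q_1),\alpha^*(\hat q_2))\asymp 2^{-k}$, i.e. the image distance is (up to bounded factors) the $(1/m)$-th power of the original distance.

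Next I would convert symbolic distances into distances between bases. The standard estimate (via the Mean Value Theorem applied to $q\mapsto \pi_q(\alpha(q_i))$, or the well-known bi-Lipschitz-on-a-logarithmic-scale relation between $q$ and $\alpha(q)$; cf.~Lemma~\ref{lem:quasi-greedy expansion-alpha-q} and \cite{Kalle-Kong-Li-Lv-2016}) gives, for bases in a fixed compact subinterval of $(q_L,q_R]$,
\[
|q_2-q_1|\asymp q_2^{-N},
\]
where $N$ is the index of the first disagreement between $\alpha(q_1)$ and $\alpha(q_2)$; since $\alpha(q)$ begins with $\sv^+$ for every $q\in(q_L,q_R]$ and both expansions lie in $X(J)$, this first disagreement occurs at some position $N$ with $(k-1)m< N\le km$, so $|q_2-q_1|\asymp q_2^{-km}$ up to a bounded multiplicative constant (here I absorb the bounded discrepancy between $q_2^{-N}$ and $q_2^{-km}$ into $c_1,c_2$, using $q_L\le q_2\le q_R$). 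Applying the same estimate in the reference alphabet $\{0,1\}$: since $\alpha^*(\hat q_1)$ and $\alpha^*(\hat q_2)$ first differ at position $k$, we get $|\hat q_2-\hat q_1|\asymp \hat q_2^{-k}$. Combining these two relations:
\[
\hat q_2-\hat q_1\asymp \hat q_2^{-k}=\big(q_2^{-km}\big)^{\frac{\log\hat q_2}{m\log q_2}}\asymp (q_2-q_1)^{\frac{\log\hat q_2}{m\log q_2}},
\]
which is exactly \eqref{eq:continuity-hat-Phi}.

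The main technical obstacle is making the relation $|q_2-q_1|\asymp q_2^{-N}$ precise and uniform, i.e. with constants $c_1,c_2$ that do not depend on $q_1,q_2$ (only on $J$). The upper bound $|q_2-q_1|\le C q_2^{-N}$ is routine since $\alpha(q_1),\alpha(q_2)$ agree on the first $N-1$ digits and $\pi_{q}$ depends smoothly on $q$. The lower bound $|q_2-q_1|\ge c q_2^{-N}$ requires knowing that $\alpha(q_1)$ and $\alpha(q_2)$ genuinely ``separate'' at rate bounded below near position $N$; this uses that $q_1,q_2\in\vb$, so both $\alpha(q_i)$ satisfy the admissibility inequalities of Lemma~\ref{lem:characterization of V-U}(ii), which prevents the tails after position $N$ from being too close. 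Some care is also needed at the right endpoint $q_R$ (where $\hat q_2=\hat\Phi_J(q_R)=2$) and near the left endpoint $q_L$, but since we only consider $q_1,q_2\in(q_L,q_R]$ and $\alpha(q)$ always starts with $\sv^+$, the exponent $\log\hat q_2/(m\log q_2)$ stays in a compact subinterval of $(0,\infty)$, keeping all constants uniform. Once this uniform symbolic-to-base comparison is in hand (in both alphabets), the rest is the bookkeeping of exponents sketched above, and part (i) of Proposition~\ref{th:characterization of hat-Phi-J} — that $\hat\Phi_J$ is a strictly increasing homeomorphism — follows immediately, since \eqref{eq:continuity-hat-Phi} forces both $\hat\Phi_J$ and $\hat\Phi_J^{-1}$ to be (locally Hölder, hence) continuous, and monotonicity is already guaranteed by Lemma~\ref{lem:bijective map-Phi} together with the monotonicity of $q\mapsto\alpha(q)$ and $\hat q\mapsto\alpha^*(\hat q)$.
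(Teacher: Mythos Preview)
Your plan is correct and follows essentially the same route as the paper's proof. Both arguments hinge on the observation that if $\alpha(q_1)$ and $\alpha(q_2)$ first disagree in the $n$-th block of length $m$, then $\alpha^*(\hat q_1)$ and $\alpha^*(\hat q_2)$ first disagree in the $n$-th digit (a direct consequence of Lemma~\ref{lem:bijective map-Phi}); one then converts this symbolic information into the estimates $q_2-q_1\asymp q_2^{-mn}$ and $\hat q_2-\hat q_1\asymp \hat q_2^{-n}$ and combines the exponents. The paper carries out the ``$\asymp$'' part by a direct computation with the series $\sum \alpha_i(q)/q^i$ (using $\sigma^{mn}(\alpha(q_2))\succcurlyeq\overline{\alpha(q_2)}$ from $q_2\in\vb$ to get the needed tail lower bound), rather than invoking the Mean Value Theorem, but this is only a stylistic difference; your identification of the technical obstacle and of the role of the hypothesis $q_i\in\vb$ is exactly right.
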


\begin{proof}
We only demonstrate the second inequality of (\ref{eq:continuity-hat-Phi}), since the proof of the first inequality is very similar.  

Let $q_1, q_2\in\vb\cap(q_L, q_R]$ with $q_1<q_2$, and let $\hat q_i:=\hat\Phi_J(q_i)$, $i=1,2$. Then $\hat q_1<\hat q_2$ by the monotonicity of $\hat\Phi_J$. Furthermore, Lemma \ref{lem:quasi-greedy expansion-alpha-q} gives $\al(q_1)\prec \al(q_2)$.
Note by Proposition \ref{proposition:U(J)-V(J)} that $\al(q_1), \al(q_2)\in\vs(J)\subset X(J)$. Therefore, $\al(q_1), \al(q_2)$ can be written as $\al(q_1)=(\sc_i)$ and $\al(q_2)=(\sd_i)$ with $\sc_i, \sd_i\in\big\{\sv, \sv^+, \overline{\sv}, \overline{\sv^+}\big\}$ for all $i\ge 1$. In view of Figure \ref{fig1}, there exists $n\ge 2$ such that 
\begin{equation}\label{eq:kd-1}
\sc_1\ldots \sc_{n-1}=\sd_1\ldots\sd_{n-1}\quad\textrm{and}\quad \sc_n\prec \sd_n.
\end{equation}
Observe that $\al(q_2)\in\vs(J)$. By Proposition \ref{proposition:U(J)-V(J)} it follows that 
\[
\si^{mn}(\al(q_2))\lge \overline{\al(q_2)}\lge \overline{\sv^+}\sv^\f\lge 0^m 10^\f,
\]
which implies
\[
1=\sum_{i=1}^\f\frac{\al_i(q_2)}{q_2^i}\ge \sum_{i=1}^{mn}\frac{\al_i(q_2)}{q_2^i}+\frac{1}{q_2^{mn+m+1}}.
\]
Therefore, by (\ref{eq:kd-1}) with $\al(q_1)=(\sc_i)$ and  $\al(q_2)=(\sd_i)$ it follows that 
\begin{align*}
\frac{1}{q_2^{mn+m+1}}&\le 1-\sum_{i=1}^{mn}\frac{\al_i(q_2)}{q_2^i}=\sum_{i=1}^{\f}\frac{\al_i(q_1)}{q_1^i}-\sum_{i=1}^{mn}\frac{\al_i(q_2)}{q_2^i}\\
&\le \sum_{i=1}^{mn}\left(\frac{\al_i(q_2)}{q_1^i}-\frac{\al_i(q_2)}{q_2^i}\right)\\
&\le \sum_{i=1}^\f\left(\frac{M}{q_1^i}-\frac{M}{q_2^i}\right)=\frac{M(q_2-q_1)}{(q_1-1)(q_2-1)}.
\end{align*}
Since $q_L<q_1<q_2<q_R$, we obtain
\begin{equation}\label{eq:kd-2}
\frac{1}{q_2^{mn}}\le \frac{M q_R^{m+1}}{(q_L-1)^2}(q_2-q_1).
\end{equation} 

Write {$({x}_i):=\Phi_J((\sc_i))$ and $(y_i):=\Phi_J((\sd_i))$.} Then (\ref{eq:kd-1}) and {Lemma \ref{lem:bijective map-Phi}} imply
\begin{equation}\label{eq:kd-3}
{x}_1\ldots {x}_{n-1}=y_1\ldots y_{n-1}\quad\textrm{and}\quad {x}_n<y_n.
\end{equation}
Note that $({x}_i), (y_i)\in\vs^*$. By the definition of $\hat\Phi_J$ we have 
{$({x}_i)=\Phi_J(\al(q_1))=\al^*\big(\hat{\Phi}_J(q_1)\big)=\al^*(\hat q_1)$, and similarly $(y_i)=\al^*(\hat q_2)$.} So, by (\ref{eq:kd-3}) it follows that 
\begin{align*}
\hat\Phi_J(q_2)-\hat\Phi_J(q_1)=\hat q_2-\hat q_1
&=\sum_{i=1}^\f\frac{y_i}{\hat q_2^{i-1}}-\sum_{i=1}^\f\frac{{x}_i}{\hat q_1^{i-1}}\\
&\le \sum_{i=1}^{n-1}\left(\frac{y_i}{\hat q_2^{i-1}}-\frac{{x}_i}{\hat q_1^{i-1}}\right)+\sum_{i=n}^\f\frac{y_i}{\hat q_2^{i-1}}\\
&\le \frac{1}{\hat q_2^{n-2}}\le \frac{4}{\hat q_2^n}.
\end{align*}
Here the second inequality follows from the definition of the quasi-greedy expansion $\al^*(\hat q_2)=(y_i)$.
This, together with (\ref{eq:kd-2}), yields
\[
\hat\Phi_J(q_2)-\hat\Phi_J(q_1) \le 4\left(\frac{1}{q_2^{mn}}\right)^{\frac{\log \hat q_2}{m\log q_2}}\le c_2 (q_2-q_1)^{\frac{\log \hat q_2}{m\log q_2}}
\] 
for some constant $c_2$ independent of $q_1$ and $q_2$. 
\end{proof}

\begin{proof}[Proof of Proposition \ref{th:characterization of hat-Phi-J}]
That $\hat{\Phi}_J$ is increasing and bijective follows since it is the composition of increasing and bijective maps.
By Lemma \ref{lem:Holder continuity of hatPhi}, $\hat{\Phi}_J$ and $\hat{\Phi}_J^{-1}$ are continuous. Thus, we have proved (i). 
Since $q_L\not\in\ub$, we have $\us(J)=\{\alpha(q): q\in \ub\cap J\}$. Thus, statement (ii) is a direct consequence of Proposition \ref{prop:chareacterization of Phi-J}. It remains only to establish (iii).

Take $q\in\vb\cap(q_L, q_R]$. Then by Proposition \ref{proposition:U(J)-V(J)} we have $\al(q)\in \vs(J)\subset X(J)$. Note by Proposition \ref{prop:property of Uq(J)} that $\wus_q(J)\subset X(J)$. Now take a sequence $(\sc_i)\in X(J)$ and let $(x_i):=\Phi_J((\sc_i))\in X^*$. Then we have the equivalences
\begin{align*}
(\sc_i)\in\wus_q(J) \quad &\Longleftrightarrow \quad \sc_1=\sv^+ \quad\mbox{and} \quad
\overline{\al(q)}\prec \si^n((\sc_i))\prec \al(q)\quad\forall n\ge 0 \\
&\Longleftrightarrow \quad x_1=1 \quad\ \ \mbox{and} \quad 
\Phi_J\left(\overline{\al(q)}\right)\prec \si^n(({x}_i))\prec \Phi_J(\al(q))\quad\forall n\ge 0 \\
&\Longleftrightarrow \quad x_1=1 \quad\ \ \mbox{and} \quad
(1-\al_i^*(\hat q))\prec \si^n(({x}_i))\prec \al^*(\hat q)\quad\forall n\ge 0 \\
&\Longleftrightarrow \quad x_1=1 \quad\ \ \mbox{and} \quad (x_i)\in \wus^*_{\hat q}
\end{align*}
Here the second equivalence follows by Lemma \ref{lem:description-Phi-J}  {with $(\sd_i)=\alpha(q)$}, and the third equivalence follows since $\al^*(\hat q)=\Phi_J(\al(q))$.
As a result, $\Phi_J(\wus_q(J))=\big\{(x_i)\in\wus^*_{\hat q}: x_1=1\big\}$.

For the entropy statement we observe that the map 
\[
\Phi_J: \wus_q(J) \ra \wus^*_{\hat q}(1):=\big\{(x_i)\in\wus^*_{\hat q}: x_1=1\big\};\qquad (\sc_i)\mapsto (\phi(\sc_i))
\] 
is a bijective {$m$-block map}. Furthermore, $\wus^*_{\hat q}$ is the disjoint union of $\wus^*_{\hat q}(1)$ with its reflection $\big\{(1-x_i): (x_i)\in\wus_{\hat q}^*(1)\big\}$. This implies $h(\wus_q(J))=h(\wus^*_{\hat q})/m$.
\end{proof}

\section{Proofs of Theorems \ref{main1}, \ref{main2} and \ref{main-b}} \label{sec: proofs of th-1-2}

Our first goal is to prove Theorem \ref{main1}. We begin with a useful lemma.

\begin{lemma} \label{lem:dense-plateaus}
Let $J=J_{\mathbf i}=[q_L,q_R]$ be a relative entropy plateau. Then the union $\bigcup_{j=1}^\infty J_{\mathbf{i}j}$ is dense in $(q_c(J),q_R]$.
\end{lemma}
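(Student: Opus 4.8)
The plan is to work entirely on the reference side, using the induced map $\hat\Phi_J$ developed in Section \ref{sec:map Phi-J}. First I would recall from Proposition \ref{th:characterization of hat-Phi-J} that $\hat\Phi_J$ is a strictly increasing homeomorphism from $\vb\cap(q_L,q_R]$ onto $\vb^*$, and that it carries the relevant structure faithfully: it should send the de Vries-Komornik number $q_c(J)$ to the Komornik-Loreti constant $q_{KL}^*=q_{KL}(1)$ for the alphabet $\{0,1\}$ (this follows by comparing the recursive formula \eqref{eq:q_c} for $\alpha(q_c(J))$ with the corresponding formula for $\alpha^*(q_{KL}^*)$ under the substitution $\phi$, since $\sv^+\mapsto 1$, $\overline{\sv}\mapsto 1$, $\overline{\sv^+}\mapsto 0$, $\sv\mapsto 0$ turns \eqref{eq:q_c} into the Thue-Morse-type sequence defining $q_{KL}^*$). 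Likewise, $\hat\Phi_J$ should send the maximal basic intervals $J_{\mathbf i j}\subset[q_c(J),q_R]$ to the entropy plateaus $[p_L,p_R]\subset(q_{KL}^*,2]$ for $M=1$: this is because $\hat\Phi_J$ conjugates the set-valued map $q\mapsto\wus_q(J)$ to $\hat q\mapsto\{(x_i)\in\wus^*_{\hat q}:x_1=1\}$ (Proposition \ref{th:characterization of hat-Phi-J}(iii)), hence conjugates the relative entropy function $H_J$ to (a constant multiple of) the reference entropy function $H^*$, so relative plateaus inside $J$ correspond exactly to entropy plateaus for the alphabet $\{0,1\}$.

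Granting this dictionary, the lemma reduces to the known statement for $M=1$: the union of all entropy plateaus $[p_L,p_R]\subset(q_{KL}^*,2]$ is dense in $(q_{KL}^*,2]$. This in turn follows from Theorem \ref{thm:devil-staircase}: $H^*$ is a nondecreasing continuous function that is locally constant almost everywhere on $(q_{KL}^*,2]$, so the set where it is locally constant is open and dense, and its connected components with positive value are precisely the entropy plateaus (the component containing any small right-neighbourhood of $q_{KL}^*$ has positive $H^*$-value since $H^*(q)>0$ for $q>q_{KL}^*$). Then I would pull this back: since $\hat\Phi_J$ is a homeomorphism, density of $\bigcup[p_L,p_R]$ in $(q_{KL}^*,2]$ transfers to density of $\bigcup_{j\ge 1}\hat\Phi_J^{-1}([p_L,p_R])=\bigcup_{j\ge 1}J_{\mathbf i j}$ in $\hat\Phi_J^{-1}((q_{KL}^*,2])=(q_c(J),q_R]$, noting that $\hat\Phi_J(q_c(J))=q_{KL}^*$ and $\hat\Phi_J(q_R)=2$.

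The main obstacle I anticipate is the bookkeeping needed to verify that $\hat\Phi_J(q_c(J))=q_{KL}^*$ and, more generally, that $\hat\Phi_J$ really does match up relative plateaus inside $J$ with entropy plateaus for $M=1$ — i.e. that the combinatorial description of the $J_{\mathbf i j}$ via admissible words and de Vries-Komornik numbers is exactly the $\phi$-image of the corresponding description of entropy plateaus. This is essentially a symbolic computation with the substitution $\phi$ and the automaton in Figure \ref{fig1}, and it may already be packaged in the properties of $\Phi_J$ and $\hat\Phi_J$ established in Section \ref{sec:map Phi-J}; if so, the proof is short. An alternative, more self-contained route that avoids invoking $M=1$ results is to argue directly: given any $q'\in(q_c(J),q_R]$ and $\varepsilon>0$, use the Devil's-staircase property of $H_J$ from Theorem \ref{main2} to find a nondegenerate interval near $q'$ on which $H_J$ is constant and positive, then take the maximal such interval and check it is one of the $J_{\mathbf i j}$ via the characterization in Theorem \ref{main-b}(i); but this presupposes Theorem \ref{main2}, which may depend on the present lemma, so I would prefer the reference-map route if the ordering of results permits.
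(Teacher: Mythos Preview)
Your proposal is correct and follows essentially the same route as the paper: reduce to the $M=1$ case via $\hat\Phi_J$, invoke density of the entropy plateaus in $(q_{KL}^*,2]$, and pull back using continuity of $\hat\Phi_J^{-1}$. The paper's proof is terser (it simply cites \cite{AlcarazBarrera-Baker-Kong-2016} for the density and asserts the correspondence $\hat\Phi_J(\vb\cap J_{\mathbf ij})=\vb^*\cap J_j^*$), but be mindful that $\hat\Phi_J$ is only defined on $\vb\cap(q_L,q_R]$, so the pullback should be phrased through $\vb$-points (endpoints of plateaus) rather than on all of $(q_{KL}^*,2]$; points of $(q_c(J),q_R]\setminus\vb$ already lie in some $J_{\mathbf ij}$.
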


\begin{proof}
Recall from \cite{AlcarazBarrera-Baker-Kong-2016} that the entropy plateaus $J_j^*$, $j\in\N$ are dense in $(q_{KL}^*,2]$. Note that we may order the intervals $J_{\mathbf{i}j}$, $j\in\N$ so that {$\hat{\Phi}_J(\vb\cap J_{\mathbf{i}j})=\vb^*\cap J_j^*$} for each $j$.
Hence, the result follows from the continuity of $\hat{\Phi}_J^{-1}$ (cf. Lemma \ref{lem:Holder continuity of hatPhi}).
\end{proof}

For $M=1$ and $q\in(1,2]$ we denote the left and right local dimensional functions by
\[
f_-^*(q):=\lim_{\delta\ra 0}\dim_H(\ub^*\cap(q-\delta, q)) \qquad \mbox{and} \qquad
f_+^*(q):=\lim_{\delta\ra 0}\dim_H(\ub^*\cap(q, q+\delta)),
\]
respectively.

\begin{lemma} \label{lem:f-minus-bridge}
Let $J=[q_L,q_R]$ be a relative plateau generated by a word $a_1\dots a_m$, and $q\in\overline{\ub}\cap(q_L,q_R]$. Then
\[
f_-(q)=\frac{\log \hat{q}}{m\log q}f_-^*(\hat{q}),
\]
where $\hat{q}:=\hat{\Phi}_J(q)$.
\end{lemma}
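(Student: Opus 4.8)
The plan is to prove this by relating the one-sided local dimension of $\ub$ near $q$ to that of $\ub^*$ near $\hat q$ through the homeomorphism $\hat\Phi_J$, exploiting the two-sided Hölder estimate in Lemma~\ref{lem:Holder continuity of hatPhi}. The key observation is that $\hat\Phi_J$ carries $\ub\cap J$ bijectively onto $\ub^*$ (Proposition~\ref{th:characterization of hat-Phi-J}(ii)), and more generally $\vb\cap(q_L,q_R]$ onto $\vb^*$, so for any $\delta>0$ small enough that $(q-\delta,q)\subset(q_L,q_R]$ we have a bijection between $\ub\cap(q-\delta,q)$ and $\ub^*\cap\big(\hat\Phi_J(q-\delta),\hat q\big)$. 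Since $\ub$ and $\vb$ differ by an at most countable set (noted after Lemma~\ref{lem:characterization of V-U}) and countable sets do not affect Hausdorff dimension, it suffices to transfer dimension estimates across $\hat\Phi_J$ restricted to $\vb\cap(q_L,q_R]$.

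First I would fix $q\in\overline\ub\cap(q_L,q_R]$ and choose a sequence $\delta_k\to 0$; write $p_k:=q-\delta_k$, so $\hat\Phi_J(p_k)\to\hat q$ from the left by continuity and monotonicity of $\hat\Phi_J$ (Proposition~\ref{th:characterization of hat-Phi-J}(i)). The map $\hat\Phi_J$ restricted to the interval $[p_k,q]$ is, by Lemma~\ref{lem:Holder continuity of hatPhi}, bi-Hölder: for $q_1<q_2$ in $\vb\cap[p_k,q]$ one has $c_1(q_2-q_1)^{s_k}\le \hat\Phi_J(q_2)-\hat\Phi_J(q_1)\le c_2(q_2-q_1)^{s_k}$ with exponent controlled by $\frac{\log\hat q_2}{m\log q_2}$; as $\delta_k\to 0$ the relevant exponents all converge to $\theta:=\frac{\log\hat q}{m\log q}$ (using continuity of the logarithm and of $\hat\Phi_J$). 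A standard fact about Hölder maps is that if $g$ is $\theta'$-Hölder then $\dim_H g(E)\le \dim_H(E)/\theta'$, and if $g^{-1}$ is $(1/\theta'')$-Hölder (equivalently $g$ satisfies the reverse estimate) then $\dim_H g(E)\ge \theta''\dim_H(E)$. Applying this to $g=\hat\Phi_J$ and $E=\ub\cap(p_k,q)$ (equivalently $\vb\cap(p_k,q)$ up to a countable set) on the interval $[p_k,q]$, and letting $k\to\infty$ so the Hölder exponents tighten to $\theta$, I get
\[
\dim_H\big(\ub^*\cap(\hat\Phi_J(p_k),\hat q)\big)=\dim_H\big(\hat\Phi_J(\ub\cap(p_k,q))\big),
\]
and passing to the limit $\delta_k\to 0$ on both sides yields $f_-(q)=\theta\cdot f_-^*(\hat q)=\frac{\log\hat q}{m\log q}f_-^*(\hat q)$.

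The main technical obstacle is making the limiting argument with the varying Hölder exponents rigorous: Lemma~\ref{lem:Holder continuity of hatPhi} gives, on a fixed pair $q_1<q_2$, an exponent depending on $q_2$ (namely $\frac{\log\hat q_2}{m\log q_2}$), not a single exponent valid on a whole neighborhood. To handle this cleanly I would, for each fixed small $\ep>0$, pick $\delta$ small enough that $\frac{\log\hat p}{m\log p}\in(\theta-\ep,\theta+\ep)$ for all $p\in[q-\delta,q]$, deduce that on $[q-\delta,q]$ the map $\hat\Phi_J$ is simultaneously $(\theta-\ep)$-Hölder-below and $(\theta+\ep)$-Hölder-above in the sense needed, obtain $(\theta-\ep)\dim_H(\ub\cap(p,q))\le\dim_H(\ub^*\cap(\hat\Phi_J(p),\hat q))\le\frac{1}{\theta-\ep}\dim_H(\ub\cap(p,q))$ type bounds (being careful which direction of Hölder controls which direction of dimension), take $\delta\to0$ and then $\ep\to0$. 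A minor secondary point is verifying that $\hat\Phi_J(q-\delta)\to\hat q$ exhausts a genuine left neighborhood of $\hat q$ in $(1,2]$ so that $f_-^*(\hat q)$ is actually being computed; this follows from $\hat\Phi_J$ being an increasing homeomorphism onto $\vb^*\cap(q_L^*,2]$ together with $\vb^*$, hence $\ub^*$, accumulating at $\hat q$ from the left when $\hat q\in\overline{\ub^*}$, which holds since $q\in\overline\ub$. One should also separately dispatch the degenerate possibility $f_-^*(\hat q)=0$, where both sides are zero and no exponent bookkeeping is needed.
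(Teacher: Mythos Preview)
Your proposal is correct and follows essentially the same strategy as the paper: use the two-sided H\"older estimate of Lemma~\ref{lem:Holder continuity of hatPhi} on a shrinking left interval, transfer dimensions between $\ub\cap(p,q)$ and $\ub^*\cap(\hat p,\hat q)$ via $\hat\Phi_J$ and Proposition~\ref{th:characterization of hat-Phi-J}(ii), then let $p\nearrow q$ so the H\"older exponents converge to $\log\hat q/(m\log q)$. The paper streamlines the technical point you flagged by choosing the approximating points $p_i$ directly in $\vb\cap J$ (possible since $q\in\overline{\ub}\subset\vb$ and $\overline{\ub}$ has no isolated points), so that $\hat\Phi_J(p_i)$ is automatically defined and the ``left-neighborhood exhaustion'' issue disappears.
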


\begin{proof}
By the assumption on $q$, we have that $q\in\vb$ and there is a sequence $(p_i)$ in $\vb\cap J$ such that $p_i<q$ for each $i$, and $p_i\nearrow q$ (cf. \cite{Vries-Komornik-Loreti-2016}). 
Let $\hat{p}_i:=\hat{\Phi}_J(p_i)$; then $\hat{p}_i<\hat{q}$ for each $i$, and $\hat{p}_i\nearrow \hat{q}$. 

Observe from Lemma \ref{lem:Holder continuity of hatPhi} that for each $i$, $\hat{\Phi}_J$ is H\"older continuous on $[p_i,q]$ with exponent $\log\hat{p}_i/(m\log q)$, and $\hat{\Phi}_J^{-1}$ is H\"older continuous on $[\hat{p}_i,\hat{q}]$ with exponent $m\log p_i/\log\hat{q}$. It follows on the one hand that
\begin{equation*}
\dim_H(\ub^*\cap(\hat{p}_i,\hat{q}))=\dim_H \hat{\Phi}_J(\ub\cap(p_i,q))
\leq \frac{m\log q}{\log \hat{p}_i}\dim_H(\ub\cap(p_i,q)),
\end{equation*}
so letting $i\to\infty$ we obtain
\[
f_-^*(\hat{q})\leq \frac{m\log q}{\log\hat{q}}f_-(q).
\]
On the other hand, 
\[
\dim_H(\ub\cap(p_i,q))=\dim_H \hat{\Phi}_J^{-1}\big(\ub^*\cap(\hat{p}_i,\hat{q})\big)
\leq \frac{\log\hat{q}}{m\log p_i}\dim_H\big(\ub^*\cap(\hat{p}_i,\hat{q})\big),
\]
so letting $i\to\infty$ gives
\[
f_-(q)\leq \frac{\log\hat{q}}{m\log q}f_-^*(\hat{q}).
\]
Hence, the lemma follows.
\end{proof}

For the right local dimensional function $f_+$ we have a similar relationship, but with a subtle difference for the domain of $q$.

\begin{lemma} \label{lem:f-plus-bridge}
Let $J=[q_L,q_R]$ be a relative plateau generated by a word $a_1\dots a_m$, and $q\in\overline{\ub}\cap(q_L,q_R)$. Then
\[
f_+(q)=\frac{\log \hat{q}}{m\log q}f_+^*(\hat{q}),
\]
where $\hat{q}:=\hat{\Phi}_J(q)$.
\end{lemma}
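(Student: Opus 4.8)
The plan is to carry over the proof of Lemma~\ref{lem:f-minus-bridge} almost verbatim, replacing the left-approximating sequence by a right-approximating one; the hypothesis $q<q_R$ (rather than $q\le q_R$) is exactly what is needed to keep this new sequence inside $J$, since $\hat\Phi_J(q_R)=2$ leaves no room on the right within $J$.

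\emph{Step 1 (construction of the approximating sequence).} Since $q\in\overline{\ub}$ we have $q\in\vb$ (cf.~\cite{Vries-Komornik-Loreti-2016}), so $\hat q:=\hat\Phi_J(q)$ is defined. From $q<q_R$ and $\al(q)\prec\al(q_R)=\sv^+(\overline{\sv})^\f$, applying the strictly increasing map $\Phi_J$ gives $\al^*(\hat q)=\Phi_J(\al(q))\prec 1^\f$, hence $\hat q<2$. Moreover $\hat q\in\overline{\ub^*}$, since $\hat\Phi_J$ is a homeomorphism on $\vb\cap(q_L,q_R]$, $q\in\overline{\ub}\cap(q_L,q_R)$ and $\hat\Phi_J(\ub\cap J)=\ub^*$. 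As $\overline{\ub^*}$ is a Cantor set without isolated points, we may pick $\hat p_i\in\vb^*$ with $\hat q<\hat p_i<2$ and $\hat p_i\searrow\hat q$; put $p_i:=\hat\Phi_J^{-1}(\hat p_i)$. Since $\hat\Phi_J^{-1}\colon\vb^*\to\vb\cap(q_L,q_R]$ is an increasing homeomorphism (Proposition~\ref{th:characterization of hat-Phi-J}(i)), the $p_i$ lie in $\vb\cap(q,q_R)$ and satisfy $p_i\searrow q$.

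\emph{Step 2 (H\"older sandwich).} Working at scales $<1$, which is all that matters for Hausdorff dimension, the second inequality in~\eqref{eq:continuity-hat-Phi} shows that $\hat\Phi_J$ restricted to $[q,p_i]$ is H\"older continuous with exponent $\frac{\log\hat q}{m\log p_i}$, while the first inequality shows that $\hat\Phi_J^{-1}$ restricted to $[\hat q,\hat p_i]$ is H\"older continuous with exponent $\frac{m\log q}{\log\hat p_i}$. Using Proposition~\ref{th:characterization of hat-Phi-J}(ii) to identify $\hat\Phi_J(\ub\cap(q,p_i))=\ub^*\cap(\hat q,\hat p_i)$, together with the fact that an $\alpha$-H\"older map increases Hausdorff dimension by at most the factor $1/\alpha$, one obtains
\begin{align*}
\dim_H\big(\ub^*\cap(\hat q,\hat p_i)\big)&\le\frac{m\log p_i}{\log\hat q}\,\dim_H\big(\ub\cap(q,p_i)\big),\\
\dim_H\big(\ub\cap(q,p_i)\big)&\le\frac{\log\hat p_i}{m\log q}\,\dim_H\big(\ub^*\cap(\hat q,\hat p_i)\big).
\end{align*}
Letting $i\to\infty$ and using $p_i\searrow q$, $\hat p_i\searrow\hat q$ (so that $\dim_H(\ub\cap(q,p_i))\to f_+(q)$ and $\dim_H(\ub^*\cap(\hat q,\hat p_i))\to f_+^*(\hat q)$), these two inequalities combine to give $f_+(q)=\frac{\log\hat q}{m\log q}f_+^*(\hat q)$.

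The only genuinely new ingredient relative to Lemma~\ref{lem:f-minus-bridge}, and the step requiring the most care, is Step~1: one must verify $\hat q<2$ (this is precisely where $q<q_R$ enters) and that $q$ is a right accumulation point of $\vb\cap J$. Everything in Step~2 is a transcription of the corresponding argument in Lemma~\ref{lem:f-minus-bridge} with the roles of left and right interchanged, so in the write-up I would simply refer back to that proof for the H\"older-and-limit bookkeeping and spell out only the construction of $(p_i)$.
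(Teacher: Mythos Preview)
Your Step~2 is fine and matches the paper's approach. The gap is in Step~1: the inference ``$\overline{\ub^*}$ has no isolated points, hence we may pick $\hat p_i\in\vb^*$ with $\hat p_i\searrow\hat q$'' is invalid. A Cantor set has no isolated points, but a point that is the \emph{left} endpoint of a complementary gap is approached only from the left, not from the right. This is exactly what happens when $q\in\overline{\ub}\setminus\ub$: then $q$ is the left endpoint $p_L$ of a sub-plateau $I\subset J$, and $\hat q$ is the left endpoint of an entropy plateau $\hat I\subset(1,2]$. In that case there is an actual gap in $\vb^*$ to the right of $\hat q$: applying the map $\hat\Phi_{\hat I}$ (Proposition~\ref{th:characterization of hat-Phi-J}(i)) one sees that $\vb^*\cap(\hat q,q_G(\hat I))$ is in bijection with $\vb^*\cap(1,(1+\sqrt5)/2)=\emptyset$. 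So no sequence $\hat p_i\in\vb^*$ with $\hat p_i\searrow\hat q$ exists, and your H\"older sandwich cannot be set up.

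The fix is the one the paper uses: split into cases. If $q\in\ub$, your argument goes through (then $\hat q\in\ub^*$ and is genuinely a right accumulation point of $\ub^*\subset\vb^*$). If $q\in\overline{\ub}\setminus\ub$, observe directly that $q$ is the left endpoint of a relative plateau inside $J$, hence $f_+(q)=0$; correspondingly $\hat q$ is the left endpoint of an entropy plateau in $(1,2]$, so $f_+^*(\hat q)=0$, and the identity holds trivially.
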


\begin{proof}
The proof is analogous to that of Lemma \ref{lem:f-minus-bridge}. If $q\in\ub$, then we can approximate $q$ from the right by a sequence of points $(r_i)$ from $\vb\cap J$, and use the H\"older properties of $\hat{\Phi}_J$ and $\hat{\Phi}_J^{-1}$ in much the same way as before. On the other hand, if $q\in\overline{\ub}\backslash\ub$, then $q$ is a left endpoint of some relative plateau inside $J$. In this case, $\hat{q}$ is the left endpoint of an entropy plateau in $(1,2]$, and we have $f_+(q)=f_+^*(\hat{q})=0$, so the identity in the lemma holds trivially.
\end{proof}

 Motivated by \cite{Allaart-Baker-Kong-17} we introduce the \emph{left and right bifurcation sets} $\bb_L$ and $\bb_R$, defined by
\begin{equation}\label{eq:left-right-bifurcation set}
\begin{split}
&\bb_L:=\set{q\in(1,M+1]: h(\us_p)\neq h(\us_q)\quad\textrm{for all } p<q},\\
&\bb_R:=\set{q\in(1, M+1]: h(\us_r)\neq h(\us_q)\quad \textrm{for all }r>q}.
\end{split}
\end{equation}
Then $\bb\subset\bb_L$ and $\bb\subset\bb_R$. Furthermore, any $q\in\bb_L\setminus\bb$ is a left endpoint of an entropy plateau, and any $q\in\bb_R\setminus\bb$ is a right endpoint of an entropy plateau. As usual, when $M=1$ we write $\bb^*_L=\bb_L$ and $\bb^*_R=\bb_R$. {Below, we will need the following extension of Proposition \ref{prop:local dimension-B}, which follows from the main results of \cite{Allaart-Baker-Kong-17}.

\begin{proposition}[\cite{Allaart-Baker-Kong-17}] \label{prop:left-and-right-bifurcation-results} \mbox{}

\begin{enumerate}[(i)]
\item If $q\in\bb_L$, then $f_-(q)=\dim_H \u_q$.
\item If $q\in\bb_R$, then $f_+(q)=\dim_H \u_q$.
\end{enumerate}
\end{proposition}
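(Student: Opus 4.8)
The plan is to derive Proposition~\ref{prop:left-and-right-bifurcation-results} from Proposition~\ref{prop:local dimension-B} by an approximation argument, using the partition \eqref{eq:relation-entropy plateaus-bifurcation set} of $(1,M+1]$ together with the (well-known) continuity of $q\mapsto\dim_H\u_q$ on $(1,M+1]$; see e.g.~\cite{Komornik-Kong-Li-17}. I will spell out only (i); statement (ii) is entirely analogous, with left-neighbourhoods of $q$ replaced by right-neighbourhoods (for (ii) one may restrict attention to $q<M+1$).

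First note that $f_-(q)\le f(q)\le\dim_H\u_q$ holds for every $q\in(1,M+1]$, directly from $f=\max\{f_-,f_+\}$ and Proposition~\ref{prop:local dimension-B}; thus only the reverse inequality needs proof, for $q\in\bb_L$. If $q\in\bb$ it is immediate from Proposition~\ref{prop:local dimension-B} when $q\ne M+1$, and for $q=M+1\in\bb_L$ one has $f_-(M+1)=1=\dim_H\u_{M+1}$, the last equality because Lebesgue-a.e.\ point of $[0,1]$ has a unique expansion in the integer base $M+1$. The only remaining case is $q=p_L$, the left endpoint of some entropy plateau $[p_L,p_R]$, and here $q_{KL}<p_L<M+1$ (no plateau can have $q_{KL}$ as an endpoint, since $H(q_{KL})=0$ while $H$ is a positive constant on each plateau).

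In this case I would first verify that $\bb$ accumulates at $p_L$ from the left. If some left-neighbourhood of $p_L$, chosen small enough to lie in $(q_{KL},M+1]$, were disjoint from $\bb$, then by \eqref{eq:relation-entropy plateaus-bifurcation set} it would be covered by the pairwise disjoint closed entropy plateaus; since a nonempty interval covered by a countable family of pairwise disjoint closed intervals must be contained in one of them, that left-neighbourhood would lie inside a single entropy plateau whose right endpoint is $\ge p_L$, which is impossible: distinct entropy plateaus are disjoint, and no entropy plateau other than $[p_L,p_R]$ can contain $p_L$ (a plateau with $p_L$ as an endpoint would merge with $[p_L,p_R]$, as $H$ would then be constant on their union, contradicting maximality). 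Hence one can choose $p_n\in\bb$ with $p_n\nearrow p_L$, and for large $n$ we have $q_{KL}<p_n<M+1$, so $p_n\in\bb\setminus\{M+1\}$. Then, by Proposition~\ref{prop:local dimension-B}, $\dim_H\u_{p_n}=f(p_n)=\lim_{\eta\to 0}\dim_H\big(\ub\cap(p_n-\eta,p_n+\eta)\big)$, and since this last quantity is non-decreasing in $\eta$, it is $\ge\dim_H\u_{p_n}$ for \emph{every} $\eta>0$. Fixing $\delta>0$ and taking $\eta_n:=(p_L-p_n)/2$, the interval $(p_n-\eta_n,p_n+\eta_n)$ lies inside $(p_L-\delta,p_L)$ as soon as $2(p_L-p_n)<\delta$, so $\dim_H\big(\ub\cap(p_L-\delta,p_L)\big)\ge\dim_H\u_{p_n}$ for all large $n$. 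Letting $n\to\infty$ and invoking continuity of $q\mapsto\dim_H\u_q$ gives $\dim_H(\ub\cap(p_L-\delta,p_L))\ge\dim_H\u_{p_L}$, and then $\delta\to 0$ yields $f_-(p_L)\ge\dim_H\u_{p_L}$. Combined with the upper bound, this proves (i).

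The only step that is not pure bookkeeping is the accumulation of $\bb$ at plateau endpoints; as sketched, this is a quick consequence of \eqref{eq:relation-entropy plateaus-bifurcation set} (and it also follows from the density of the entropy plateaus established in \cite{AlcarazBarrera-Baker-Kong-2016}). I therefore expect the proposition to present no real difficulty beyond assembling these pieces carefully.
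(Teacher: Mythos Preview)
Your argument is correct. The paper itself does not give a proof of this proposition; it simply states that it ``follows from the main results of \cite{Allaart-Baker-Kong-17}'' and moves on. So there is no in-paper proof to compare against, and your derivation supplies exactly the kind of reduction one would want: you obtain the statement from Proposition~\ref{prop:local dimension-B} (which is already imported from \cite{Allaart-Baker-Kong-17}) together with the continuity of $q\mapsto\dim_H\u_q$.

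Two small comments. First, the topological fact you invoke---that an interval cannot be written as a countable disjoint union of at least two nonempty closed sets---is Sierpi\'nski's theorem; it is standard but perhaps worth naming, since as stated (``must be contained in one of them'') it may look stronger than it is. Your alternative route via the density of plateaus is not quite self-contained as written: density gives accumulation of plateau \emph{endpoints}, which lie in $\bb_L\cup\bb_R$ rather than $\bb$. A cleaner substitute, if you want to avoid Sierpi\'nski, is the intermediate value argument: since $q\in\bb_L$ forces $H(q-\epsilon)<H(q)$, continuity of $H$ makes $H$ take uncountably many values on $(q-\epsilon,q)$, while on the plateau part of this interval $H$ takes only countably many values; hence $\bb\cap(q-\epsilon,q)\neq\emptyset$. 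Second, your handling of $q=M+1$ is fine but could be shortened: $M+1\in\bb$, so this case is already covered by the direct appeal to Proposition~\ref{prop:local dimension-B}.
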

}

\begin{proof}[Proof of Theorem \ref{main1}]
Note by (\ref{eq:dimension-local-u}) that for any $q\in\cb_\f$ we have $f(q)=f_-(q)=f_+(q)=0$. Suppose $q\in\cb_0$, i.e., $q$ is a de Vries-Komornik number. Then $f_-(q)=0$ since $\ub\cap(q-\ep,q)=\emptyset$ for sufficiently small $\ep>0$. Furthermore, $q=q_c(J)$ for some relative plateau $J$, so $\hat{\Phi}_J(q)=q_{KL}^*$. Since $f_+^*(q_{KL}^*)=0$ (see \cite[Theorem 2]{Allaart-Baker-Kong-17}) and $q\in\overline{\ub}$, it follows by Lemma \ref{lem:f-plus-bridge} that $f_+(q)=0$. 
Thus, the proof will be complete once we establish (ii).

Consider first $f_-$. Take $q\in\overline{\ub}\backslash\cb$, and let $J=[q_L,q_R]$ be the smallest relative plateau such that $q\in(q_L,q_R]$.
If $J=[1,M+1]$, then $q\in\bb_L$ and {by Proposition \ref{prop:left-and-right-bifurcation-results}(i),}
\[
f_-(q)=\dim_H \u_q=\frac{h(\us_q)}{\log q}=\frac{h(\wus_q(J))}{\log q}>0.
\]

Otherwise, put $\hat{q}:=\hat{\Phi}_J(q)$. Then $\hat{q}\in\bb_L^*$, so
{by Proposition \ref{prop:left-and-right-bifurcation-results}(i)} it follows that 
\[
f_-^*(\hat{q})=\dim_H \u_{\hat q}^*=\frac{h(\us_{\hat q}^*)}{\log \hat{q}}=\frac{h(\was_{\hat q})}{\log\hat{q}}>0. 
\]
Hence, Lemma \ref{lem:f-minus-bridge} along with Proposition \ref{th:characterization of hat-Phi-J}(iii) gives
\[
f_-(q)=\frac{h(\was_{\hat q})}{m\log q}=\frac{h(\wus_q(J))}{\log q}>0.
\]

Consider next $f_+$. Take again $q\in\overline{\ub}\backslash\cb$. If $q\in\overline{\ub}\backslash\ub$, then $q$ is a left endpoint of a relative plateau and $f_+(q)=0$. So assume $q\in\ub\backslash\cb$. Let $J=[q_L,q_R]$ now be the smallest relative plateau such that $q\in(q_L,q_R)$. 
If $J=[1,M+1]$, then $q\in\bb_R$ and {by Proposition \ref{prop:left-and-right-bifurcation-results}(ii),}
\[
f_+(q)=\dim_H \u_q=\frac{h(\us_q)}{\log q}=\frac{h(\wus_q(J))}{\log q}>0.
\] 
Otherwise, put $\hat{q}:=\hat{\Phi}_J(q)$. Then $\hat{q}\in\bb_R^*$, and
{using Proposition \ref{prop:left-and-right-bifurcation-results}(ii) and Lemma \ref{lem:f-plus-bridge} it follows in the same way as above that 
\[
f_+(q)=\frac{h(\wus_q(J))}{\log q}>0.
\]
}

The statement about $f(q)$ is a direct consequence of the statements about $f_-$ and $f_+$.
\end{proof}

We next prepare to prove Theorem \ref{main2}. Fix a relative plateau $J=[q_L, q_R]$ generated by $\sv=a_1\ldots a_m$.  Recall from Section \ref{s1} that the bases $q_G(J), q_F(J)\in J$ satisfy 
\[
\al(q_G(J))=\left(\sv^+\overline{\sv^+}\right)^\f \quad \textrm{and} \quad \al(q_F(J))=\left(\sv^+\overline{\sv}\overline{\sv^+}\sv\right)^\f.
\]
Furthermore, the de Vries-Komornik number $q_c(J)=\min(\ub\cap J)$ satisfies
\[
\al(q_c(J))=\sv^+\overline{\sv}\overline{\sv^+}\sv^+\overline{\sv^+}\sv\sv^+\overline{\sv}\cdots.
\]
By Lemma \ref{lem:characterization of V-U}, the bases $q_G(J), q_F(J)$ and $q_c(J)$ all belong to $\vb\cap(q_L, q_R]$, so we may define their image bases in $\vb^*$ by
\[
\hat q_G:=\hat\Phi_J(q_G(J)), \quad\hat q_F:=\hat\Phi_J(q_F(J))\quad\textrm{and}\quad \hat q_c:=\hat\Phi_J(q_c(J)).
\]
The quasi-greedy expansions of these bases are given by 
\[
\al^*(\hat q_G)=(10)^\f,\quad \al^*(\hat q_F)=(1100)^\f,\quad\textrm{and}\quad \al^*(\hat q_c)=11010011\;00101101\cdots.
\]
We have $\hat q_G={(1+\sqrt{5})/2}\approx 1.61803, \hat q_F\approx 1.75488$ and $\hat q_c\approx 1.78723$. Note that $\hat{q}_c$ is simply the Komornik-Loreti constant $q_{KL}^*$. The following result is due to Glendinning and Sidorov \cite{Glendinning_Sidorov_2001} and Komornik et al.~\cite{Komornik-Kong-Li-17}; see also \cite{Allaart-Kong-2018}.

\begin{proposition}
\label{prop:unique expansion-two digits case}
Let $q\in(1,2]$. Then the entropy function 
\[H: q\mapsto h(\wus_q^*)\] is a Devil's staircase, i.e., $H$ is continuous, non-deceasing and locally constant almost everywhere on $(1,2]$.

\begin{enumerate}[{\rm(i)}]
\item If $1<q\le \hat q_G$, then $\wus_q^*=\emptyset$.

\item If $\hat q_G<q\le \hat q_F$, then $\wus_q^*=\big\{(01)^\f, (10)^\f\big\}$.

\item If $\hat q_F<q< \hat q_{c}$, then $\wus_q^*$ is countably infinite.

\item If $q=\hat q_{c}$, then $\wus_q^*$ is uncountable but $h(\wus_q^*)=0$.

\item If $\hat q_c<q\le 2$, then $h(\wus_q^*)>0$.
\end{enumerate}
\end{proposition}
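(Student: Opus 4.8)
The plan is to treat this as essentially a classical statement about univoque sets over the alphabet $\{0,1\}$, and to assemble it from the references cited (Glendinning--Sidorov \cite{Glendinning_Sidorov_2001}, Komornik--Kong--Li \cite{Komornik-Kong-Li-17}, and Allaart--Kong \cite{Allaart-Kong-2018}) together with the lexicographic characterisation (\ref{eq:U-star-V-star}). The Devil's staircase assertion for $H\colon q\mapsto h(\wus_q^*)$ is exactly the specialisation of Theorem \ref{thm:devil-staircase} to $M=1$: non-decreasing monotonicity comes from the fact that $q\mapsto \de(q)$ is strictly increasing (Lemma \ref{lem:quasi-greedy expansion-alpha-q}) so that $q\mapsto\wus_q^*$ is non-decreasing in the sense of set inclusion via (\ref{eq:def-widetilde-uq}); continuity and ``locally constant almost everywhere'' are precisely the content of parts (i) and (ii) of Theorem \ref{thm:devil-staircase}; and part (iii) there gives $H(q)>0$ iff $q>q_{KL}^*=\hat q_c$, which is statement (v) here together with the qualitative claim that $H(\hat q_c)=0$ in (iv).

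For the structural statements (i)--(iv) I would argue directly from the lexicographic description
\[
\wus_q^* = \big\{(x_i)\in\{0,1\}^\N : (1-x_i) \prec \sigma^n((x_i)) \prec (x_i)\ \text{for no violation}\big\},
\]
more precisely from (\ref{eq:def-widetilde-uq}) with $M=1$, namely $\overline{\de(q)} \prec \sigma^n((x_i)) \prec \de(q)$ for all $n\ge 0$, recalling $\overline{\de(q)}=(1-\de_i(q))$. The idea is to locate $\de(q)$ relative to the reference sequences $(10)^\f$, $(1100)^\f$, and $\de^*(q_{KL}^*)=1101001100101101\cdots$. For (i): if $q\le\hat q_G$ then $\de(q)\lle(10)^\f$, and any $(x_i)\in\wus_q^*$ must satisfy $x_1=1$ (else $\sigma^0\succ\overline{\de(q)}\succeq(01)^\f$ forces $x_1=1$ anyway, or one uses the reflection), but then $x_1 x_2=10$ is forced, then $x_3 x_4=01$ by the lower constraint with the reflected shift, and one derives a contradiction with strictness — so $\wus_q^*=\emptyset$; note $\hat q_G=(1+\sqrt5)/2$ is exactly the golden ratio, for which $\de(\hat q_G)=(10)^\f$. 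For (ii): when $\hat q_G<q\le\hat q_F$ one has $(10)^\f\prec\de(q)\lle(1100)^\f$, and a finite case analysis of admissible blocks (driven by the automaton-type reasoning already used in Lemma \ref{lem:uq-xb}) shows the only sequences satisfying both lexicographic inequalities are the two periodic orbits $(01)^\f,(10)^\f$. For (iii): when $\hat q_F<q<\hat q_c$, one shows $\wus_q^*$ is infinite (it properly contains the previous two-element set by monotonicity and the fact that new sequences of the form $\big((10)^k(01)^k\big)^\f$-type appear) yet still countable, which is the Glendinning--Sidorov dichotomy: $\wus_q^*$ is either finite, countably infinite, or of positive entropy, and the countable-infinite regime is exactly $(\hat q_F,\hat q_c)$. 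For (iv): at $q=\hat q_c$ one has $\de(\hat q_c)=$ the Thue--Morse-type sequence, $\wus_{\hat q_c}^*$ is uncountable (this is the classical Komornik--Loreti fact that $\u_{q_{KL}}$ is uncountable), but $h=0$ because the admissible block-growth along the Thue--Morse substitution structure is subexponential.

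The main obstacle I anticipate is not in the high-level organisation — all ingredients are available in the literature — but in verifying the exact threshold values $\hat q_G,\hat q_F,\hat q_c$ and confirming that the three lexicographic regimes $\{\de(q)\lle(10)^\f\}$, $\{(10)^\f\prec\de(q)\lle(1100)^\f\}$, $\{(1100)^\f\prec\de(q)\prec\de^*(q_{KL}^*)\}$, $\{\de(q)=\de^*(q_{KL}^*)\}$, $\{\de(q)\succ\de^*(q_{KL}^*)\}$ align precisely with cases (i)--(v). This amounts to checking that $(01)^\f,(10)^\f$ satisfy (\ref{eq:def-widetilde-uq}) exactly when $\de(q)\succ(10)^\f$ (equivalently $q>\hat q_G$), that no sequence other than these two satisfies it until $\de(q)$ exceeds $(1100)^\f$, and that positive entropy appears exactly past the Komornik--Loreti constant; I would pin these down by the same automaton/labelled-graph bookkeeping already developed around Figure \ref{fig1}, or simply cite \cite{Glendinning_Sidorov_2001,Komornik-Kong-Li-17,Allaart-Kong-2018} where these computations are carried out in full. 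Once those thresholds are confirmed, (i)--(v) follow by direct comparison, and the Devil's staircase property is inherited from Theorem \ref{thm:devil-staircase}.
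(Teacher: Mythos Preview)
Your proposal is correct and aligns with the paper's treatment: the paper does not prove this proposition at all but simply attributes it to Glendinning--Sidorov \cite{Glendinning_Sidorov_2001} and Komornik--Kong--Li \cite{Komornik-Kong-Li-17} (with a pointer to \cite{Allaart-Kong-2018}), so your plan to assemble the result from those references, together with the specialisation of Theorem \ref{thm:devil-staircase} to $M=1$, is exactly what is intended. Your additional lexicographic sketches for (i)--(iv) go beyond what the paper itself provides, and while some details (e.g.\ the case analysis in (i)) are a bit loose, they are in the right spirit and not required since a citation suffices.
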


\begin{proof}[Proof of Theorem \ref{main2}]
Recall from Proposition \ref{th:characterization of hat-Phi-J} (iii) that for each $q\in\vb\cap(q_L, q_R]$ we have
\begin{equation} \label{eq:jh-4}
\wus_q(J)=\Phi_J^{-1}\left(\big\{(x_i)\in\was_{\hat q}: x_1=1\big\}\right)\quad\textrm{and}\quad h(\wus_q(J))=\frac{h(\was_{\hat q})}{m},
\end{equation}
where $\hat{q}:=\hat{\Phi}_J(q)$. Since $\bb^*\subset\ub^*$, the function $q\mapsto h(\was_q)$ is constant on each connected component of $(1,2]\backslash \ub^*$. Recalling from Proposition \ref{th:characterization of hat-Phi-J} (ii) that $\hat{\Phi}_J(\ub\cap J)=\ub^*$, it follows by \eqref{eq:jh-4} that the function $H_J: q\mapsto h(\wus_q(J))$ is constant on each connected component of $(q_L,q_R]\backslash (\ub\cap J)$. Since $\ub$ is Lebesgue null, this implies that $H_J$ is almost everywhere locally constant on $J$. That $H_J$ is also continuous follows since $\ub\cap J$ has no isolated points, and the  restriction of $H_J$ to $\ub\cap J$ is the composition of the map $q\mapsto h(\was_q)$ with $\hat{\Phi}_J$; the former is continuous by Proposition \ref{prop:unique expansion-two digits case}, the latter by Lemma \ref{lem:Holder continuity of hatPhi}. Therefore, the entropy function $H_J$ is a Devil's staircase.

Statements (i)-(v) of Theorem \ref{main2} now follow from the corresponding statements of Proposition \ref{prop:unique expansion-two digits case}. For example, if $q_L<q\le q_G(J)$, then by (\ref{eq:jh-4}) it follows that
\[
\wus_q(J)\subset\wus_{q_G(J)}(J)=\Phi_J^{-1}\left(\big\{(x_i)\in\was_{\hat q_G}: x_1=1\big\}\right)=\emptyset,
\]
where the last equality follows from Proposition \ref{prop:unique expansion-two digits case} (i). 

Similarly, for (ii) we take $q\in(q_G(J), q_F(J)]$. Then by (\ref{eq:jh-4}) and Proposition \ref{prop:unique expansion-two digits case} (ii) it follows that 
\[
\wus_q(J)\subset\wus_{q_F(J)}(J)=\Phi_J^{-1}\left(\big\{(x_i)\in\was_{\hat q_F}: x_1=1\big\}\right)=\Phi_J^{-1}(\big\{(10)^\f\big\})=\big\{(\sv^+\overline{\sv^+})^\f\big\}.
\]
Vice versa, one checks easily using \eqref{eq:def-widetilde-uq} that $\big\{(\sv^+\overline{\sv^+})^\f\big\}\subset\wus_q(J)$.

For (iii), we take $q\in (q_F(J),q_c(J))$. Then 
\[
\left\{\big(\sv^+\overline{\sv^+}\big)^k\big(\sv^+\overline{\sv}\overline{\sv^+}\sv\big)^\f: k\in\N\right\}\subset \wus_q(J).
\]
On the other hand, we can find a sequence $(q_n)$ in $\vb$ that converges from the left to $q_c(J)$: if $\alpha(q_c(J))=\theta_1\theta_2\dots$, we can take $q_n$ with $\alpha(q_n)=(\theta_1\dots \theta_{2^nm}^-)^\f$. Then for large enough $n$, $q<q_n$ and $\hat{q}_n:=\hat{\Phi}_J(q_n)<q_{KL}^*$, so $\wus_q(J)$ is countable by Proposition \ref{prop:unique expansion-two digits case} (iii) and \eqref{eq:jh-4}.

Statement (iv) is immediate from \eqref{eq:jh-4} and Proposition \ref{prop:unique expansion-two digits case}(iv), since $q_c(J)\in\vb$.


For (v) we first note that $q_c(J)\in\vb\cap(q_L, q_R]$ and $\hat q_c\in\vb^*$. Furthermore, there exists a sequence $(r_i)$ in $\vb\cap(q_L, q_R]$ such that $r_i\searrow q_c(J)$. This follows from Lemma \ref{lem:dense-plateaus}, since the endpoints of relative plateaus lie in $\vb$.
Accordingly, the image sequence $(\hat r_i)$ in $\vb^*$ satisfies $\hat r_i\searrow \hat q_c$, where $\hat r_i=\hat \Phi_J(r_i)$. So, for any $q\in(q_c(J), q_R]$ there exists $r_i\in\vb\cap (q_c(J), q)$ such that 
\[
\wus_q(J)\supset\wus_{r_i}(J)\quad\textrm{and}\quad h(\wus_{r_i}(J))=\frac{h(\was_{\hat r_i})}{m}>0.
\]
This proves (v). 
\end{proof}

\begin{proof}[Proof of Corollary \ref{cor:variation-in-plateau}]
Take $q\in(q_L, q_R]$. Then $\al_1(q)\ldots \al_m(q)=a_1\ldots a_m^+$. Note that $\al(q_L)=(a_1\ldots a_m)^\f$. Then by the definitions of $\wus_q$  and $\wus_{q_L}$ it follows that $\wus_q(J)\subset \wus_q\setminus\wus_{q_L}$. Furthermore,  any sequence $(x_i)\in\wus_q\setminus\wus_{q_L}$ or its reflection $\overline{(x_i)}$ has a tail sequence in $\wus_q(J)\cup\big\{\sv^\f\big\}$. Therefore,  
\[
\dim_H(\wus_q\setminus\wus_{q_L})= \dim_H\wus_q(J).
\]
Hence, the result follows from Theorem \ref{main2}.
\end{proof}

\begin{proof}[Proof of Corollary \ref{cor:continuity-of-f}]
Fix $q_0\in\overline{\ub}$. If $q_0\in\cb_\f$, the same argument based on \eqref{eq:dimension-local-u} that we used to prove $f(q_0)=0$ shows also that $f(q)\to 0$ as $q\to q_0$. Hence $f$ is continuous at $q_0$. If $q_0\in\cb_0$, then $q_0=q_c(J)$ for some relative plateau $J$. Since $\wus_q(I)\subset \wus_q(J)$ whenever $I\subset J$, Theorem \ref{main1} implies that
\[
f(q)\leq \frac{h(\wus_q(J))}{\log q}, \qquad\mbox{for all $q\in J, q\neq q_0$}.
\]
But by Theorem \ref{main2}, $h(\wus_q(J))\to 0$ as $q\to q_0=q_c(J)$. Hence, $f(q)\to 0=f(q_0)$. This shows that $f$ is continuous on $\cb$.

Now suppose $q_0\in\overline{\ub}\backslash\cb$. Then, using Lemma \ref{lem:dense-plateaus}, there is a sequence of relative plateaus $[p_L(i),p_R(i)]$ such that $p_L(i)\nearrow q_0$ as $i\to\infty$. Each of these plateaus contains a point $q_i\in\cb$ (in fact, infinitely many), so that $q_i\nearrow q_0$. By Theorem \ref{main1}, we obtain $f(q_0)>0=\lim_{i\to \infty} f(q_i)$. Therefore, $f$ is discontinuous at $q_0$. The corresponding statements for $f_-$ and $f_+$ follow in the same way.
\end{proof}

Finally, we prove Theorem \ref{main-b}.

\begin{proof}[Proof of Theorem \ref{main-b}]
(i) Let $J=J_{\mathbf i}=[q_L, q_R]$ be a relative plateau generated by $\sv=a_1\ldots a_m$. We show that the next level relative plateaus $J_{\mathbf{i}j}$, $j=1,2,\dots$ are exactly the maximal intervals on which $h(\wus_q(J))$ is positive and constant; this, along with Theorem \ref{main2}, will imply (i). Fix $j\in\N$, and write $I:=J_{\mathbf{i}j}=[p_L,p_R]$. Then $p_L,p_R\in\vb$, so we may put $\hat{p}_L:=\hat{\Phi}_J(p_L)$ and $\hat{p}_R:=\hat{\Phi}_J(p_R)$. Then $\hat{I}:=[\hat{p}_L,\hat{p}_R]$ is an entropy plateau in $(1,2]$, and so $h(\wus_{\hat q}^*)$ is positive and constant on $\hat{I}$. By Proposition \ref{th:characterization of hat-Phi-J}(iii), it follows that $h(\wus_q(J))$ is positive and constant on $I$. 

By Lemma \ref{lem:dense-plateaus} the union $\bigcup_{j\in\N}J_{\mathbf{i}j}$ is dense in $(q_c(J),q_R]$. As a result, $I$ is a {\em maximal} interval on which $h(\wus_q(J))$ is constant.

(ii) Since $\bigcup_{j\in\N}J_{\mathbf{i}j}$ is dense in $(q_c(J),q_R]$, each $q\in\bb(J)$ is an accumulation point of the set of endpoints of the intervals $J_{\mathbf{i}j}$. Since these endpoints lie in $\vb$ and $\vb$ is closed, it follows that $\bb(J)\subset \vb$. Hence, $\hat{\Phi}_J(q)$ is well defined for all $q\in\bb(J)$. It now follows immediately from part (i) that
\begin{equation}
\hat{\Phi}_J(\bb(J))=\bb^*.
\label{eq:bifurcation-bridge}
\end{equation}
Since $\bb^*\subset \ub^*$, it follows from Proposition \ref{th:characterization of hat-Phi-J}(ii) that $\bb(J)\subset \ub\cap J$.

(iii) That $\bb(J)$ is Lebesgue null is now obvious from (ii), since $\ub$ is Lebesgue null.

(iv) Note by (\ref{eq:dimension-local-u}) that 
\[
\dim_H(\ub\cap J)=\frac{\log 2}{m\log q_R}. 
\]
Since $\bb(J)\subset\ub\cap J$, it therefore suffices to prove 
\begin{equation}\label{eq:k15-1}
\dim_H\bb(J)\ge \frac{\log 2}{m\log q_R}.
\end{equation}
Observe that $2=\hat\Phi_J(q_R)\in\bb^*$. Furthermore, the proof of \cite[Theorem 3]{AlcarazBarrera-Baker-Kong-2016} shows that $\dim_H(\bb^*\cap[2-\eta,2])=1$ for every $\eta>0$. Given $\ep>0$, we can choose a point $q_0\in \vb\cap(q_R-\ep,q_R)$; let $\hat{q}_0:=\hat{\Phi}_J(q_0)$.
By Lemma \ref{lem:Holder continuity of hatPhi}, $\hat{\Phi}_J$ is H\"older continuous with exponent $\log \hat{q}_0/(m\log q_R)$ on $[q_0,q_R]$, and so, using \eqref{eq:bifurcation-bridge},
\[
1=\dim_H(\bb^*\cap[\hat{q}_0,2])=\dim_H \hat{\Phi}_J(\bb(J)\cap[q_0,q_R])\leq \frac{m\log q_R}{\log \hat{q}_0}\dim_H \bb(J).
\]
Letting $\ep\to 0$, $\hat{q}_0\to 2$ and we obtain (\ref{eq:k15-1}), as desired. 

(v) By (i) and the countable stability of Hausdorff dimension,
\[
\dim_H\big((\ub\cap J)\backslash \bb(J)\big)=\sup_{j\in\N} \dim_H(\ub\cap J_{\mathbf{i}j}).
\]
If $J_{\mathbf{i}j}=[p_L,p_R]$ is generated by the block $b_1\dots b_l$, then
\begin{equation}
\dim_H(\ub\cap J_{\mathbf{i}j})=\frac{\log 2}{l\log p_R}.
\label{eq:relative-plateau-dimension}
\end{equation}
Furthermore, $b_1\dots b_l$ must be a concatenation of words from $\mathcal{L}=\big\{\sv,\sv^+,\overline{\sv},\overline{\sv^+}\big\}$ , so $l$ is a multiple of $m$. Since $b_1\dots b_l$ is admissible and $\alpha(p_L)>q_c(J)$, it follows from \eqref{eq:q_c} that $l\geq 3m$. (See Figure \ref{fig1}.) Moreover, the only relative plateau among the $J_{\mathbf{i}j}$ with $l=3m$ is the one with generating word $b_1\dots b_l=\sv^+\overline{\sv}\overline{\sv^+}$, whose right endpoint is $p_0$. 

It remains to check that this plateau maximizes the expression in \eqref{eq:relative-plateau-dimension}. To this end, take any other relative plateau $[p_L,p_R]\subset J$ generated by a block of length $l=km$. If $p_R\geq p_0$, then $l\log p_R\geq 3m\log p_0$. On the other hand, suppose $p_R<p_0$. Then $\alpha(q_c(J))\prec \alpha(p_L)\prec \big(\sv^+\overline{\sv}\overline{\sv^+}\big)^\f$, and since $\alpha(p_L)$ must correspond to an infinite path in the labeled digraph $\mathcal{G}=(G,\mathcal{L})$ from Figure \ref{fig1}, this is only possible when $k\geq 5$. In \cite{Allaart-Baker-Kong-17} it was observed that $q_{KL}\geq (M+2)/2$. Estimating $p_R$ below by $q_{KL}$ and $p_0$ above by $M+1$, we thus obtain for all $M\geq 2$,
\begin{equation*}
l\log p_R \geq 5m\log q_{KL} \geq 5m\log\left(\frac{M+2}{2}\right) \geq 3m\log(M+1)>3m\log p_0,
\end{equation*}
where we used the algebraic inequality $(M+2)^5\geq 32(M+1)^3$, valid for $M\geq 2$. For the case $M=1$ we can use the better estimate $q_{KL}>1.78$, giving $5\log q_{KL}>2.8>3\log 2>3\log p_0$, where we have used the natural logarithm. Thus, in all cases, $l\log p_R\geq 3m\log p_0$, as was to be shown.
\end{proof}

\begin{remark}
Note by \eqref{eq:bifurcation-bridge} and Proposition \ref{th:characterization of hat-Phi-J}(i) that the {relative bifurcation sets $\bb(J_{\mathbf i}): \mathbf{i}\in\{1,2,\dots\}^n$}, $n\in\N$ are mutually homeomorphic.
\end{remark}

To end this section, we illustrate how Theorem \ref{main1} can be combined with the entropy ``bridge" of Proposition \ref{th:characterization of hat-Phi-J}(iii) to compute $f(q)$ explicitly at some special points.

\begin{example} \label{ex:endpoints}
Let $J=[p_L,p_R]$ be a relative plateau generated by the word $\sv=a_1\dots a_m$. For any integer $k\geq 3$, let $[q_L,q_R]$ be the relative plateau generated by the admissible word $\mathbf{b}:=\sv^+\overline{\sv}^{k-2}\overline{\sv^+}$. Then $[q_L,q_R]\subset J$, and $J$ is the parent interval of $[q_L,q_R]$. 
Note that $q_L\in\vb$. Hence, by Theorem \ref{main1} and Proposition \ref{th:characterization of hat-Phi-J}(iii),
\[
f(q_L)=f_-(q_L)=\frac{h\big(\wus_{q_L}(J)\big)}{\log q_L}=\frac{h\big(\wus_{\hat{q}_L}^*\big)}{m\log q_L},
\]
where $\hat{q}_L:=\hat{\Phi}_J(q_L)$. Note that 
\[
\alpha^*(\hat{q}_L)=\Phi_J(\alpha(q_L))=\Phi_J\left(\big(\sv^+\overline{\sv}^{k-2}\overline{\sv^+}\big)^\f\right)=(1^{k-1}0)^\f.  
\]
Define the sets
\[
\wvs_{\hat{q}}^*:=\big\{(x_i)\in\{0,1\}^\N: \overline{\al^*(\hat{q})}\preceq\si^n((x_i))\preceq \al^*(\hat{q})\ \forall n\ge 0\big\}, \qquad \hat{q}\in(1,2].
\]
It is well known (see \cite{Komornik-Kong-Li-17} or \cite{Allaart-Kong-2018}) that $h\big(\wus_{\hat{q}}^*\big)=h\big(\wvs_{\hat{q}}^*\big)$.
Moreover, $\wvs_{\hat{q}_L}^*$ is a subshift of finite type and it consists of precisely those sequences in $\{0,1\}^\N$ which do not contain the word $1^{k}$ or $0^{k}$. A standard argument (see \cite{Lind_Marcus_1995} or \cite[Lemma 4.2]{Allaart-Baker-Kong-17}) now shows that $h\big(\wvs_{\hat{q}_L}^*\big)=\log\varphi_{k-1}$, where for each $j\in\N$, $\varphi_j$ is the unique root in $(1,2)$ of $1+x+\dots+x^{j-1}=x^j$.
Therefore,
\[
f(q_L)=f_-(q_L)=\frac{\log\varphi_{k-1}}{m\log q_L}.
\]
Of course, $f_+(q_L)=0$. Similarly, since $h\big(\wus_q(J)\big)$ is constant on $[q_L,q_R]$, Theorem \ref{main1} gives
\[
f(q_R)=f_+(q_R)=\frac{\log\varphi_{k-1}}{m\log q_R}.
\]
On the other hand, by \eqref{eq:dimension-local-u},
\[
f_-(q_R)=\frac{\log 2}{mk\log q_R},
\]
since the generating word $\mathbf{b}$ of $[q_L,q_R]$ has length $mk$. Observe that $f_-(q_R)<f_+(q_R)$. This last inequality holds generally, for any relative plateau $[q_L,q_R]$ in $J$: If $[q_L,q_R]$ has generating block $\mathbf{b}$ of length $l$, then $l=mk$ for some $k\in\N$. Again putting $\hat{q}_L:=\hat{\Phi}_J(q_L)$, Lemma 3.1(ii) in \cite{Allaart-Baker-Kong-17} gives
\[
h\big(\wvs_{\hat{q}_L}^*\big)>\frac{\log 2}{k},
\]
and so
\[
f_-(q_R)=\frac{\log 2}{mk\log q_R}<\frac{h\big(\wvs_{\hat{q}_L}^*\big)}{m\log q_R}=\frac{h\big(\wvs_{\hat{q}_R}^*\big)}{m\log q_R}=f_+(q_R).
\]
(There is one exception: If $[q_L,q_R]$ is a first-level relative plateau (i.e. an entropy plateau) generated by $\sv=a_1\dots a_m$, then the parent interval $J$ is $J_\emptyset=[1,M+1]$. In this case, there is no map $\Phi_J$ relating $h(\wus_{q_L}(J))$ to the alphabet $\{0,1\}$. Instead,
\[
f_+(q_R)=\frac{h(\wus_{q_R})}{\log q_R}=\frac{h(\wus_{q_L})}{\log q_R}, \qquad\mbox{and} \qquad 
f_-(q_R)=\frac{\log 2}{m\log q_R}.
\]
As shown in \cite[Lemma 3.1(ii)]{Allaart-Baker-Kong-17}, these two quantities are equal if (and only if) $M=2j+1\geq 3$, and $\sv=a_1:=j+1$.)

The above procedure generalizes to other relative plateaus: $\wvs_{\hat{q}_L}^*$ is always a subshift of finite type of $\{0,1\}^\N$, so its topological entropy can be calculated, numerically at least, by writing down the corresponding adjacency matrix and computing its spectral radius; {see \cite[Chap.~5]{Lind_Marcus_1995}.}
\end{example}

\section{Proof of Theorem \ref{main3}} \label{sec:proof-of-theorem3}

\begin{proof}[Proof of Theorem \ref{main3}]
{Let $1<t_1<t_2\leq M+1$, and let $J=J_{\mathbf i}=[q_L, q_R]$ be the smallest relative plateau containing $[t_1,t_2]$. Define}
\[
g_J(t_1,t_2):=\max\set{\frac{h(\wus_q(J))}{\log q}: q\in\overline{\bb(J)\cap[t_1, t_2]}},
\]
so we need to show that 
\begin{equation}
\dim_H(\ub\cap[t_1,t_2])=g_J(t_1,t_2). 
\label{eq:Th3-equation}
\end{equation}
Note first that, if $t_1=q_L$, then there exists $\delta>0$ such that $\ub\cap[t_1,t_1+\delta]=\emptyset$, and hence $\bb(J)\cap [t_1, t_2]=\bb(J)\cap[t_1+\delta, t_2]$. Therefore, both sides of \eqref{eq:Th3-equation} remain unchanged upon replacing $t_1$ with $t_1+\delta$. Consequently, we may assume that $t_1>q_L$.

We first demonstrate the lower bound. Since $\bb(J)\subset \ub$, we may assume without loss of generality that $\ub\cap(t_1, t_2)\neq\emptyset$. Then by the definition of $J$ we also have $\bb(J)\cap[t_1, t_2]\ne \emptyset$. Since $t_1>q_L$, Theorem \ref{main1} gives for any $q\in\bb(J)\cap[t_1, t_2]$ that
\[
f_-(q)=\frac{h(\wus_q(J))}{\log q}>0.
\]
Since $\bb(J)\cap[t_1, t_2]\subset\ub\cap[t_1, t_2]$, this implies
\begin{equation*}
\dim_H(\ub\cap[t_1, t_2])\ge \sup\set{f_-(q): q\in\bb(J)\cap[t_1, t_2]}=g_J(t_1,t_2),
\end{equation*}
where in the last step we used the continuity of the map $q\mapsto h(\wus_q(J))$ (cf. Theorem \ref{main2}). 

This proves the lower bound. For the upper bound, we use a compactness argument similar to that used in \cite{Kalle-Kong-Li-Lv-2016}.
Recall from Theorem \ref{main-b}(i) that 
\[
(q_L,q_R]=\bb(J)\cup(q_L, q_c(J)]\cup\bigcup_{j=1}^\infty J_{\mathbf{i}j}. 
\]
Let $J_{\mathbf{i}j}=[p_L,p_R]$ be a relative plateau that intersects $[t_1,t_2]$ in more than one point. Then either $p_L$ or $p_R$ lies in $(t_1,t_2)$, so at least one of these two points lies in $\overline{\bb(J)\cap[t_1, t_2]}$. Then by the proof of \cite[Theorem 4.1]{Allaart-Baker-Kong-17} it follows that 
\[
\dim_H(\ub\cap[p_L,p_R])=\frac{\log 2}{m\log p_R}\le \min\set{\frac{h(\wus_{p_L})}{\log p_L}, \frac{h(\wus_{p_R})}{\log p_R}} \le g_J(t_1,t_2).
\]
By the countable stability of Hausdorff dimension, we obtain
\begin{equation}
\dim_H\left(\ub\cap \bigcup_{j=1}^\infty J_{\mathbf{i}j}{\cap[t_1,t_2]}\right)\leq g_J(t_1,t_2).
\label{eq:bound-1}
\end{equation}
Now let $\ep>0$. Then for each $q\in \overline{\bb(J)\cap[t_1, t_2]}$ there is a number $\delta(q)>0$ such that
\[
\dim_H\big(\ub\cap(q-\delta(q),q+\delta(q))\big)\leq f(q)+\ep \leq \frac{h(\wus_q(J))}{\log q}+\ep\leq g_J(t_1,t_2)+\ep.
\]
The intervals $(q-\delta(q),q+\delta(q))$ form an open cover of $\overline{\bb(J)\cap[t_1, t_2]}$, and since $\overline{\bb(J)\cap[t_1, t_2]}$ is compact, this open cover contains a finite subcover. Therefore,
\begin{equation}
\dim_H\left(\ub\cap \overline{\bb(J)\cap[t_1, t_2]}\right)\leq g_J(t_1,t_2)+\ep.
\label{eq:bound-2}
\end{equation}
Letting $\ep\to 0$, \eqref{eq:bound-1} and \eqref{eq:bound-2} together give the upper bound in \eqref{eq:Th3-equation}, since $\ub\cap(q_L,q_c(J))=\emptyset$.
\end{proof}

\section{Proof of Theorem \ref{main4}} \label{sec:proof-of-theorem4}  

Recall the definitions \eqref{eq:strongly-univoque-set} and \eqref{eq:Wq-def} of $\check{\us}_q$ and $\ws_q$, and that $\mathcal{W}_q=\pi_q(\ws_q)$. When $M=1$ we write $\ws_q^*:=\ws_q$. We will prove Theorem \ref{main4} indirectly, by showing that $\dim_H\mathcal W_q=0$ for $q\in\cb$, and if $q\in\overline{\ub}\backslash \cb$, then
\[
\dim_H\mathcal W_q=\frac{h(\wus_q(J))}{\log q},
\]
where $J=[q_L, q_R]$ is the smallest relative plateau such that $q\in(q_L, q_R]$. The result then follows from Theorem \ref{main1}.

Recall that on the sequence space $\Omega_M$ we are using the metric $\rho$ from \eqref{eq:rho-metric}. The following lemma allows us to work with subsets of $\Omega_M$ rather than sets in Euclidean space.

\begin{lemma} \label{lem:bi-Lipschitz}
Let $q\in(1,M+1]$. For any subset $F\subset \wus_q$, we have
\[
\dim_H \pi_q(F)=\frac{\log 2}{\log q}\dim_H F.
\]
\end{lemma}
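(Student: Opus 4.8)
The plan is to show that the projection map $\pi_q$, restricted to the symbolic univoque set $\wus_q$ equipped with the metric $\rho$ from \eqref{eq:rho-metric}, is bi-Hölder with exponent $\log q/\log 2$, and that this forces the stated dimension formula. First I would recall that the metric $\rho$ on $\Omega_M$ assigns to a pair of sequences agreeing in exactly their first $k$ coordinates the distance $2^{-k}$. The key geometric observation is that for two sequences $(x_i),(y_i)\in\wus_q$ with common prefix of length exactly $k$, the Euclidean distance $|\pi_q((x_i))-\pi_q((y_i))|$ is comparable to $q^{-k}$: the upper bound $|\pi_q((x_i))-\pi_q((y_i))|\le M q^{-k}/(q-1)$ is immediate from the geometric-series estimate, while the lower bound is the point where the restriction to $\wus_q$ is essential. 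Because both sequences lie in $\wus_q$, the lexicographic characterization \eqref{eq:characterization-unique expansion} (equivalently, that their tails are squeezed strictly between $\overline{\al(q)}$ and $\al(q)$) prevents the ``carrying'' phenomenon that would otherwise allow two sequences differing at position $k{+}1$ to have the same $\pi_q$-image; quantitatively, one gets $|\pi_q((x_i))-\pi_q((y_i))|\ge c\,q^{-k}$ for a constant $c=c(q)>0$. This is the standard fact that $\pi_q$ is injective on $\wus_q$ with a Hölder inverse, and I expect to cite it from the literature (e.g. \cite{Komornik-Kong-Li-17} or \cite{Allaart-Kong-2018}) rather than reprove it, since it is the crux and is well known.

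Granting the two-sided estimate $c_1 q^{-k}\le |\pi_q((x_i))-\pi_q((y_i))|\le c_2 q^{-k}$ whenever $(x_i),(y_i)\in\wus_q$ share a prefix of length exactly $k$, I would translate it into the bi-Hölder statement: writing $s=\log q/\log 2$, we have $c_1\,\rho((x_i),(y_i))^{s}\le |\pi_q((x_i))-\pi_q((y_i))|\le c_2\,\rho((x_i),(y_i))^{s}$ for all pairs in $\wus_q$, since $\rho((x_i),(y_i))=2^{-k}$ gives $\rho((x_i),(y_i))^s = 2^{-ks}=q^{-k}$. Then I would invoke the standard fact that a bi-Hölder homeomorphism with exponent $s$ multiplies Hausdorff dimension by $s$ in one direction and by $1/s$ in the other: from $F\subset\wus_q$ one gets $\dim_H\pi_q(F)\le s^{-1}\dim_H F$ using the upper Hölder bound, and $\dim_H\pi_q(F)\ge s^{-1}\dim_H F$ using the lower bound (equivalently, applying the upper bound to the inverse map $\pi_q^{-1}$, which is well defined on $\pi_q(\wus_q)$ precisely because of injectivity on $\wus_q$). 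Combining, $\dim_H\pi_q(F)=s^{-1}\dim_H F=\frac{\log 2}{\log q}\dim_H F$, which is the claim.

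The main obstacle is the lower bound $|\pi_q((x_i))-\pi_q((y_i))|\ge c\,q^{-k}$, i.e.\ showing $\pi_q$ does not collapse distances on $\wus_q$. Without the univoque restriction this is false (two sequences like $x_{k+1}0^\infty$ and $(x_{k+1}{-}1)$ followed by the largest $q$-expansion of $1$ can have equal images), so the argument must genuinely use membership in $\wus_q$ — specifically, that the tails $\sigma^k((x_i))$ and $\sigma^k((y_i))$ satisfy the strict lexicographic bounds of \eqref{eq:characterization-unique expansion}, which keeps each tail bounded away (in value) from $0$ and from $M/(q-1)$, hence keeps the two images separated by a definite fraction of $q^{-k}$. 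I would spell out this separation estimate carefully (it is the only non-formal step) and then the rest follows from general measure-theoretic principles about Hölder maps.
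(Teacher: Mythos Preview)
Your proposal is correct and follows essentially the same approach as the paper: both argue that $\pi_q$ is bi-H\"older on $\wus_q$ (equivalently, bi-Lipschitz with respect to a rescaled metric) and then invoke the standard effect of bi-H\"older maps on Hausdorff dimension. The paper packages this slightly differently by introducing the auxiliary metric $\rho_q((x_i),(y_i))=q^{-\inf\{i\ge 0:\,x_{i+1}\ne y_{i+1}\}}$, citing the bi-Lipschitz property of $\pi_q$ with respect to $\rho_q$ from \cite[Lemma~2.7]{Jordan-Shmerkin-Solomyak-2011} or \cite[Lemma~2.2]{Allaart-2017} (rather than \cite{Komornik-Kong-Li-17} or \cite{Allaart-Kong-2018}), and then noting $\rho_q=\rho^{\log q/\log 2}$; but this is only a cosmetic repackaging of the bi-H\"older estimate you describe.
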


\begin{proof}
It is well known (see \cite[Lemma 2.7]{Jordan-Shmerkin-Solomyak-2011} or \cite[Lemma 2.2]{Allaart-2017}) that $\pi_q$ is bi-Lipschitz on $\wus_q$ with respect to the metric
\[
\rho_q((x_i), (y_i)):=q^{-\inf\set{i\ge 0: x_{i+1}\ne y_{i+1}}}.
\]
Hence, with respect to the metric $\rho_q$ on $\Omega_M$, $F$ and $\pi_q(F)$ have the same Hausdorff dimension for any $F\subset \wus_q$. The lemma now follows since $\rho_q=\rho^{\log q/\log 2}$.
\end{proof}

In view of Lemma \ref{lem:bi-Lipschitz}, it suffices to compute $\dim_H \ws_q$. The next lemma facilitates this.

\begin{lemma} \label{lem:W-bridge}
Let $J=[q_L,q_R]$ be a relative plateau generated by $\sv=a_1\dots a_m$, and $q\in\vb\cap(q_L,q_R]$. Then
\[
\dim_H \ws_q=\frac{1}{m}\dim_H \ws_{\hat{q}}^*,
\]
where $\hat{q}:=\hat{\Phi}_J(q)$.
\end{lemma}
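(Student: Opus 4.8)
The plan is to build an explicit dimension-preserving correspondence between $\ws_q$ and $\ws_{\hat q}^*$ using the block map $\Phi_J$ from Section~\ref{sec:map Phi-J}, just as we did for the topological entropy in Proposition~\ref{th:characterization of hat-Phi-J}(iii). First I would recall from \eqref{eq:Wq-def} that
\[
\ws_q=\bigcap_{k=1}^\f\bigcup_{n=0}^\f\set{(x_i)\in\wus_q: x_{n+1}\ldots x_{n+k}=\al_1(q)\ldots \al_k(q)\ \textrm{or}\ \overline{\al_1(q)\ldots \al_k(q)}},
\]
i.e. a sequence lies in $\ws_q$ iff it lies in $\wus_q$ and infinitely often some shift of it agrees with a longer and longer prefix of $\pm\al(q)$. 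Since $\wus_q$ is the union of $\wus_q(J)$ (up to reflection and tail-equivalence, as in the proof of Corollary~\ref{cor:variation-in-plateau}), and since $\Phi_J$ conjugates $\wus_q(J)$ to $\{(x_i)\in\was_{\hat q}:x_1=1\}$ with $\al^*(\hat q)=\Phi_J(\al(q))$, the natural guess is that $\Phi_J$ maps $\ws_q\cap\{(x_i):x_1\ldots x_m=\sv^+\}$ onto $\ws_{\hat q}^*\cap\{x_1=1\}$, and that the reflection symmetry handles the rest.

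The key steps, in order, would be: (1) reduce to the ``anchored'' pieces, showing $\dim_H\ws_q=\dim_H(\ws_q\cap X(J))$ and $\dim_H\ws_{\hat q}^*=\dim_H(\ws_{\hat q}^*\cap\{x_1=1\})$, using that every sequence in $\wus_q$ (resp.\ $\was_{\hat q}$) has a tail in the anchored set up to reflection, and that removing a prefix of fixed length and reflecting are bi-Lipschitz operations for $\rho$; (2) prove the set identity $\Phi_J(\ws_q\cap X(J))=\ws_{\hat q}^*\cap\{x_1=1\}$. For this I would translate the defining property of $\ws_q$ through $\Phi_J$: if $(\sc_i)\in X(J)$ has $\sc_{n+1}\ldots\sc_{n+k}$ equal to a prefix $\al_1(q)\ldots\al_k(q)$ that is a multiple of $m$ in length, then applying $\phi$ block-by-block gives agreement of $x_{n'+1}\ldots x_{n'+k'}$ (with $n'=n/m$, $k'=k/m$) with the corresponding prefix of $\al^*(\hat q)$, and conversely; the content of Lemma~\ref{lem:description-Phi-J} (the equivalence of the length-$n$ and the length-$mn$ conditions) is exactly what lets one pass between ``$k$ arbitrarily large'' and ``$k$ an arbitrarily large multiple of $m$''; (3) conclude via the H\"older / scaling behaviour of $\Phi_J$. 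Since $\Phi_J$ replaces each symbol of a $\{0,1\}$-sequence by a length-$m$ block, it is a similarity with ratio $2^{-m}$ with respect to $\rho$ restricted to the relevant subshifts (more precisely, $\rho(\Phi_J(\sc),\Phi_J(\sd))=\rho(\sc,\sd)^m$ on $X(J)$, because the first disagreement among the $m$-blocks occurs at index $m$ times the first disagreement of the underlying $\{0,1\}$-sequences, and consecutive blocks in $\mathcal L$ that agree in their first coordinate under $\phi$ still differ — wait, one must check this carefully). This gives $\dim_H\Phi_J(F)=\tfrac1m\dim_H F$ for $F\subset X(J)$ with $\Phi_J$ a bijection onto its image, hence the claimed formula.

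The main obstacle I anticipate is step (3), specifically verifying that $\Phi_J$ scales Hausdorff dimension by exactly the factor $1/m$ on these sets. The subtlety is that $\phi$ is \emph{not} injective on $\mathcal L$ ($\phi(\overline{\sv^+})=\phi(\sv)=0$ and $\phi(\overline{\sv})=\phi(\sv^+)=1$), so two sequences in $X(J)$ whose images first differ at coordinate $n'$ in $\{0,1\}$-land might already have differed \emph{earlier} as $m$-block sequences — e.g. one could have $\sv$ where the other has $\overline{\sv^+}$. So $\rho(\Phi_J(\sc),\Phi_J(\sd))$ need not equal $\rho(\sc,\sd)^m$ exactly; rather, if $\Phi_J(\sc)$ and $\Phi_J(\sd)$ first differ at position $n'$, then $\sc,\sd$ first differ at some position $\le n'$, which gives only the one-sided bound $\rho(\Phi_J(\sc),\Phi_J(\sd))\le\rho(\sc,\sd)^m$, i.e.\ $\Phi_J^{-1}$ is H\"older with exponent $1/m$ and $\Phi_J$ expands. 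To get the reverse inequality one must observe that once $\sc,\sd$ first differ at position $j$, by the right-resolving structure of $\mathcal G$ (Figure~\ref{fig1}) the differing symbols $\sc_j,\sd_j$ are \emph{adjacent} in the order $\overline{\sv^+}\prec\overline{\sv}\prec\sv\prec\sv^+$, and — crucially — \emph{one of them maps to $0$ and the other to $1$ under $\phi$} (this is precisely what cases (I), (II) in the proof of Lemma~\ref{lem:bijective map-Phi} establish). Hence the images $\Phi_J(\sc),\Phi_J(\sd)$ already differ at the $j$-th coordinate, so in fact they first differ exactly at position $j$, giving $\rho(\Phi_J(\sc),\Phi_J(\sd))=\rho(\sc,\sd)^m$ after all. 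With that identity in hand, $\Phi_J$ is a bi-Lipschitz bijection for the metrics $\rho$ and $\rho^m$, so $\dim_H\Phi_J(F)=\tfrac1m\dim_H F$, and combining with steps (1) and (2) completes the proof. The remaining bookkeeping — that the tail-reduction in step (1) does not change dimension, and that sequences of the form $\sv^\f$ or $\overline{\sv}^\f$ that might appear as tails contribute nothing — is routine and parallels the argument in Corollary~\ref{cor:variation-in-plateau}.
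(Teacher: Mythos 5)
Your proposal is correct and takes essentially the same route as the paper: reduce to the anchored piece $\ws_q\cap\wus_q(J)$ (every sequence in $\ws_q$ eventually contains $\sv^+$ or $\overline{\sv^+}$, so up to a shift and a reflection it lands in $\wus_q(J)$), transport it by $\Phi_J$ to $\ws_{\hat q}^*\cap\{x_1=1\}$ as in Proposition~\ref{th:characterization of hat-Phi-J}(iii), and use the bi-H\"older scaling of $\Phi_J$ to divide the dimension by $m$. The paper asserts the bi-H\"older property of $\Phi_J$ without comment; your careful verification that the first disagreement index is preserved exactly — because cases (I) and (II) in the proof of Lemma~\ref{lem:bijective map-Phi} show the differing $\mathcal{L}$-symbols always map to distinct $\{0,1\}$-symbols, making $\rho(\Phi_J(\sc),\Phi_J(\sd))=\rho(\sc,\sd)^m$ on $X(J)$ — is a worthwhile filling-in of that gap.
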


\begin{proof}
Since $\ws_q\subset\wus_q$ and every sequence in $\ws_q$ must eventually contain the word $\alpha_1(q)\dots\alpha_m(q)$, we have
\begin{equation*}
\dim_H \ws_q=\dim_H (\ws_q\cap \wus_q(J)).
\end{equation*}
By a trivial extension of Proposition \ref{th:characterization of hat-Phi-J}(iii),
\[
\Phi_J\big(\ws_q\cap \wus_q(J)\big)=\ws_{\hat{q}}^*\cap \wus_{\hat{q}}^*(1),
\]
where $\wus_{\hat{q}}^*(1):=\{(x_i)\in \wus_{\hat{q}}^*: x_1=1\}$.
Since $\Phi_J$ is bi-H\"older continuous with exponent $1/m$, it follows that
\[
\dim_H \ws_q=\frac{1}{m}\dim_H\left(\ws_{\hat{q}}^*\cap \wus_{\hat{q}}^*(1)\right)=\frac{1}{m}\dim_H \ws_{\hat{q}}^*,
\]
as desired.
\end{proof}

We first consider the case when $q\in\cb$.

\begin{proposition} \label{prop:W-q-C}
If $q\in\cb$, then $\dim_H \ws_q=0$.
\end{proposition}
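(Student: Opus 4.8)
The statement splits into two cases according to whether $q\in\cb_0$ (a de Vries--Komornik number) or $q\in\cb_\f$. For the first case, I would reduce to the alphabet $\{0,1\}$ via Lemma~\ref{lem:W-bridge}: if $q=q_c(J)$ for some relative plateau $J=[q_L,q_R]$ generated by $\sv$, then $\hat q=\hat\Phi_J(q)=q_{KL}^*$ is the (binary) Komornik--Loreti constant, so it suffices to show $\dim_H\ws_{q_{KL}^*}^*=0$. This is precisely the content of \cite[Theorem~1.5]{Allaart-2017} (that $\dim_H\mathcal W_q=0$ for de Vries--Komornik $q$), combined with Lemma~\ref{lem:bi-Lipschitz}; alternatively it follows directly from Proposition~\ref{prop:unique expansion-two digits case}(iv), since $\ws_{q_{KL}^*}^*\subset\wus_{q_{KL}^*}^*$ and $h(\wus_{q_{KL}^*}^*)=0$ forces the entropy, hence the Hausdorff dimension (via the relation $\dim_H\le h/\log q$ for shift-invariant-type sets, or more directly since $\ws_{q_{KL}^*}^*$ sits inside a zero-entropy subshift and has a countable-to-one address map from a zero-entropy system). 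So the first case reduces to known $\{0,1\}$-results.

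For the case $q\in\cb_\f$, I would use the defining property that $q$ lies in infinitely many nested relative plateaus $J^{(1)}\supset J^{(2)}\supset\cdots$ with $q\in J^{(n)}$ for all $n$. Write $J^{(n)}=[q_L^{(n)},q_R^{(n)}]$ with generating word of length $m_n$, where $m_n\to\infty$ (since the generating block of a level-$n$ relative plateau has length $\ge n$, as noted in the proof of Proposition~\ref{prop:property of C-infity}(iii)). The key observation is that $\ws_q\subset\wus_q$, and since $q\in J^{(n)}$ with $q>q_L^{(n)}$ (as $q\in\ub$ by Proposition~\ref{prop:property of C-infity}(i), so $q$ is not a left endpoint), every sequence in $\ws_q$ eventually contains the prefix block $\al_1(q)\cdots\al_{m_n}(q)$ and hence has a tail in $\wus_q(J^{(n)})$. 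Therefore, by the argument in Lemma~\ref{lem:W-bridge} (or Corollary~\ref{cor:variation-in-plateau}'s reasoning),
\[
\dim_H\ws_q=\dim_H\bigl(\ws_q\cap\wus_q(J^{(n)})\bigr)\le\dim_H\wus_q(J^{(n)})=\frac{h(\wus_q(J^{(n)}))}{\log 2}.
\]
Wait --- I should be careful here: the relevant normalization uses the $\rho$-metric on $\Omega_M$, so $\dim_H\wus_q(J^{(n)})$ in the $\rho$-metric equals $h(\wus_q(J^{(n)}))/\log 2$. Now I invoke \eqref{eq:dimension-local-u}: since $q\in J^{(n)}=[q_L^{(n)},q_R^{(n)}]$ and $q>q_L^{(n)}$, we have $\dim_H(\overline\ub\cap[q,q_R^{(n)}])=\frac{\log 2}{m_n\log q_R^{(n)}}$; but more to the point, I want a bound on $h(\wus_q(J^{(n)}))$ itself. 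From Proposition~\ref{th:characterization of hat-Phi-J}(iii), $h(\wus_q(J^{(n)}))=h(\wus_{\hat q_n}^*)/m_n$ where $\hat q_n=\hat\Phi_{J^{(n)}}(q)\in(1,2]$, and $h(\wus_{\hat q_n}^*)\le\log 2$ trivially. Hence $\dim_H\ws_q\le\frac{h(\wus_{\hat q_n}^*)}{m_n\log 2}\le\frac{1}{m_n}\to 0$ as $n\to\infty$, giving $\dim_H\ws_q=0$.

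**The main obstacle** I anticipate is making the reduction "$\dim_H\ws_q=\dim_H(\ws_q\cap\wus_q(J^{(n)}))$" clean in the $\cb_\f$ case, and in particular verifying that $q$ genuinely lies in the half-open interval $(q_L^{(n)},q_R^{(n)}]$ so that the prefix characterization \eqref{eq:U_q(J)} applies and $\hat\Phi_{J^{(n)}}(q)$ is well defined. This needs: (a) $q\in\ub\subset\vb$, which is Proposition~\ref{prop:property of C-infity}(i) plus Lemma~\ref{lem:characterization of V-U}; and (b) $q\ne q_L^{(n)}$, which holds because a point of $\cb_\f$ is never an endpoint of a relative plateau (endpoints lie in $\cb_0\cup\bb\cup\bigcup\bb(J_{\mathbf i})$, and more directly the construction puts $q$ strictly inside each $J^{(n)}$ — indeed $q\in J^{(n+1)}\subset\mathrm{int}\,J^{(n)}$ except possibly at the level where $J^{(n+1)}=J_{\mathbf i 0}$, but the $J_{\mathbf i 0}$ are null intervals meeting $\ub$ only at $q_c(J_{\mathbf i})\in\cb_0$, contradicting $q\in\cb_\f$). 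Once this is pinned down, the entropy-to-dimension passage and the limit $m_n\to\infty$ are routine. I would also remark that this argument simultaneously reproves the $\cb_0$ case if one allows $n=0$ with $J^{(0)}$ the parent plateau and uses $h(\wus_{q_c(J)}(J))=0$ from Theorem~\ref{main2}(iv) directly, so the two cases can in fact be unified: in both cases one finds a relative plateau $J$ with $q\in(q_L,q_R]$ for which $h(\wus_q(J))$ is either $0$ (if $q\le q_c(J)$, covering $\cb_0$) or arbitrarily small along a subsequence of ever-deeper plateaus (covering $\cb_\f$), and $\dim_H\ws_q\le h(\wus_q(J))/\log 2$ throughout.
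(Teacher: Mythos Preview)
Your proposal is correct and follows essentially the same approach as the paper: split into $\cb_0$ and $\cb_\f$, reduce the former via Lemma~\ref{lem:W-bridge} to the fact that $h(\wus_{q_{KL}^*}^*)=0$, and for the latter use that $q\in\ub\subset\vb$ lies in relative plateaus of arbitrarily large block length $m_n$, so that $\dim_H\ws_q\le 1/m_n\to 0$. Two small remarks: (i) the paper handles $q=q_{KL}$ separately (there is no map $\hat\Phi_{J_\emptyset}$, so Lemma~\ref{lem:W-bridge} does not apply; instead one simply uses $\dim_H\ws_{q_{KL}}\le\dim_H\wus_{q_{KL}}=0$), and (ii) in the $\cb_\f$ case the paper applies Lemma~\ref{lem:W-bridge} directly to get $\dim_H\ws_q=\frac{1}{m_n}\dim_H\ws_{\hat q_n}^*\le\frac{1}{m_n}$, which is slightly cleaner than your detour through $\dim_H\wus_q(J^{(n)})=h(\wus_q(J^{(n)}))/\log 2$ (a true statement, but one that itself requires the bi-H\"older property of $\Phi_{J^{(n)}}$ to justify).
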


\begin{proof}
If $q=q_{KL}$, then $\dim_H\ws_q\leq\dim_H \wus_q=0$ by Proposition \ref{prop:unique expansion-two digits case}, which holds also for larger alphabets (cf.~\cite{Kong_Li_Dekking_2010}). And if $q\in\cb_0\backslash \{q_{KL}\}$, then $q=q_c(J)$ for some relative plateau $J$, so that $\hat{q}:=\hat{\Phi}_J(q)=q_{KL}^*$ and the result follows from Lemma \ref{lem:W-bridge} and Proposition \ref{prop:unique expansion-two digits case}.

Suppose $q\in\cb_\f$. Then $q\in\ub\subset \vb$ by Proposition \ref{prop:property of C-infity}, and there are infinitely many relative plateaus $J=[q_L,q_R]$ such that $q\in(q_L,q_R]$. If $J$ is one such relative plateau generated by a word of length $m$, then Lemma \ref{lem:W-bridge} gives
\[
\dim_H \ws_q=\frac{1}{m}\dim_H \ws_{\hat{q}}^*\leq \frac{1}{m}\dim_H \{0,1\}^\N=\frac{1}{m}.
\]
Letting $m\to\infty$, we obtain $\dim_H \ws_q=0$.
\end{proof}

Recall the definition of $\bb_L$ (and $\bb_L^*$) from \eqref{eq:left-right-bifurcation set}.

\begin{proposition} \label{prop:W-q-B}
Let $q\in\bb_L$. Then
\[
\dim_H\ws_q=\frac{h(\wus_q)}{\log 2}=\dim_H \wus_q. 
\]
\end{proposition}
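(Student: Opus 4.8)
The plan is to establish the two inequalities $\dim_H\ws_q\ge h(\wus_q)/\log 2$ and $\dim_H\ws_q\le\dim_H\wus_q$ separately, since the middle identity $h(\wus_q)/\log 2=\dim_H\wus_q$ is already known (it follows from Lemma \ref{lem:bi-Lipschitz} applied to $F=\wus_q$ together with the standard fact that $\dim_H\u_q=h(\wus_q)/\log q$; alternatively it is the content of the map $q\mapsto\dim_H\u_q$ being $h(\wus_q)/\log q$). The upper bound $\dim_H\ws_q\le\dim_H\wus_q$ is immediate from $\ws_q\subset\wus_q$ and monotonicity of Hausdorff dimension. So the whole content of the proposition is the \emph{lower} bound: for $q\in\bb_L$, the ``thin'' difference set $\ws_q=\wus_q\setminus\check\us_q$ is still fat enough to carry the full dimension of $\wus_q$.

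For the lower bound I would exploit the defining property of $\bb_L$: since $q\in\bb_L$, we have $h(\wus_p)<h(\wus_q)$ for every $p<q$, so there is a strictly increasing sequence $p_n\nearrow q$ with $h(\wus_{p_n})\nearrow h(\wus_q)$. The idea is to build, for each $n$, a large subset of $\ws_q$ by concatenating long blocks drawn from $\wus_{p_n}$ with occasional ``connector'' blocks equal to a long prefix of $\alpha(q)$ (or its reflection $\overline{\alpha(q)}$). Concretely: fix a large $k$, let $w=\alpha_1(q)\dots\alpha_k(q)$, and consider sequences of the form $B_1\, w\, B_2\, w\, B_3\, w\cdots$ where each $B_i$ is an admissible block of length $N_i$ occurring in $\wus_{p_n}$, with $N_i\to\infty$ along the sequence (say $N_i=i$, or $N_i$ growing fast). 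One must check (i) every such sequence lies in $\wus_q$ — this uses that $\wus_{p_n}\subset\wus_q$ and that inserting the prefix $w$ of $\alpha(q)$ between admissible blocks does not violate the lexicographic characterization \eqref{eq:def-widetilde-uq}, because $w\prec\alpha(q)$ strictly and the shifts through $w$ stay squeezed between $\overline{\alpha(q)}$ and $\alpha(q)$ for $k$ large (here is where I'd be careful, inserting $\overline{w}$ instead of $w$ on alternate occurrences if needed to keep both inequalities strict); and (ii) every such sequence lies in $\ws_q$ rather than $\check\us_q$, because it contains the word $\alpha_1(q)\dots\alpha_k(q)$ infinitely often for \emph{every} $k$ — this is arranged by letting the connector length grow, so that for each $k$ all sufficiently late connectors have length $\ge k$. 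Then a standard entropy/dimension estimate (a Frostman-type mass distribution on this ``Moran-like'' set, with the rare connectors of negligible density) shows this subset of $\ws_q$ has dimension at least $h(\wus_{p_n})/\log 2 - \varepsilon(k)$; letting $k\to\infty$ and then $n\to\infty$ gives $\dim_H\ws_q\ge h(\wus_q)/\log 2$.

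The main obstacle I anticipate is the bookkeeping in step (i): verifying that the concatenated sequences genuinely satisfy the two-sided inequality $\overline{\alpha(q)}\prec\sigma^j((x_i))\prec\alpha(q)$ for \emph{all} $j\ge0$, uniformly, when the admissible blocks $B_i$ are themselves only known to avoid violating $\alpha(p_n)$, not $\alpha(q)$. The cleanest route is probably to work not with raw admissible blocks but with blocks that are finite sub-blocks of a single well-chosen sequence in $\wus_{p_n}$ bounded away from the extreme sequences, and to separate the connectors by buffer copies of a safe digit pattern (e.g. a long run matching a prefix of $\alpha(p_n)$) so that the ``danger'' of a long shifted prefix approaching $\alpha(q)$ is confined to within the connector itself, where it is exactly $w$ — which is strictly less than $\alpha(q)$. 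One might also streamline the entire argument by invoking Proposition \ref{prop:left-and-right-bifurcation-results}(i), which gives $f_-(q)=\dim_H\u_q$ for $q\in\bb_L$, together with the forthcoming identity $\dim_H\mathcal W_q=f_-(q)$ — but since Theorem \ref{main4} is what we are ultimately proving, the self-contained combinatorial construction above is the safer path, and in fact a parallel argument for $M=1$ (yielding $\dim_H\ws_q^*=h(\wus_q^*)/\log2$ for $q\in\bb_L^*$) combined with Lemma \ref{lem:W-bridge} handles the case of a general relative plateau once the base case $J=[1,M+1]$ is done.
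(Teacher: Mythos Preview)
Your high-level plan matches the paper's: approximate $q$ from below by $p\nearrow q$, build a Moran-like subset of $\ws_q$ by alternating long ``free'' blocks drawn from (a version of) $\wus_p$ with connector blocks carrying ever-longer prefixes of $\alpha(q)$, and apply a mass-distribution argument. The upper bound and the identification $h(\wus_q)/\log 2=\dim_H\wus_q$ are indeed immediate.

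However, what you flag as ``bookkeeping in step~(i)'' is in fact the substance of the proof, and your suggested implementation would not go through as stated. Inserting a raw prefix $w=\alpha_1(q)\dots\alpha_k(q)$ between arbitrary $\wus_{p_n}$-blocks fails at both junctions: after $w$ you need the continuation to be $\prec\alpha_{k+1}\alpha_{k+2}\dots$, which you have no control over; and the tail of $w$ may itself coincide with a long prefix of $\overline{\alpha(q)}$, creating a violation on the reflected side. The paper handles this with three ingredients you are missing. First, it works not with $\wus_p$ but with the subshift of finite type $\wvs_p$ (taking $p=p_L(n)$ the left endpoint of an entropy plateau), represented as a labelled directed graph whose vertices are the allowed $(l-1)$-blocks; the ``free'' pieces are then paths of prescribed length starting and ending at the specific vertex $\mathbf u=\alpha_1\dots\alpha_{l-1}$, and a Perron--Frobenius lemma counts them. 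Second, the connector is not $\alpha_1\dots\alpha_{m_j}$ but $\alpha_1\dots\alpha_{m_j}^{\,-}$ (with $\alpha_1\dots\alpha_{m_j}^-$ admissible), the minus guaranteeing strict inequality on one side. Third---and this is the genuinely delicate step---the passage from $\alpha_1\dots\alpha_{m_j}^{\,-}$ back to a word beginning with $\mathbf u$ requires an explicit bridge block $b_1\dots b_{n_j}$, built by a recursive descent on indices $i_\nu$ where the tail of $\alpha_1\dots\alpha_{i_\nu}$ matches $\overline{\alpha_1\dots\alpha_i}^{\,+}$; this recursion terminates precisely because $q>q_{KL}$, and the bridge is a concatenation of blocks $(\alpha_1\dots\alpha_{i_\nu}^{\,-})^{k_\nu}$ whose exponents come from the assumption $q\in\bb_L$. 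None of your proposed fixes (alternating $w$ with $\overline w$, or vague ``buffer'' patterns) substitutes for this construction. Your final remark about reducing to $M=1$ via Lemma~\ref{lem:W-bridge} is also off: $q\in\bb_L$ \emph{is} the base case (smallest plateau $J=[1,M+1]$), and the proposition must be proved directly for every $M$; Lemma~\ref{lem:W-bridge} is used afterwards to lift the result to points inside relative plateaus.
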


The proof uses the following lemma.

\begin{lemma} \label{lem:number-of-paths}
Let $G=(V,E)$ be a strongly connected directed graph with adjacency matrix $A$, and let $\gamma$ be the spectral radius of $A$. 
Let $\mathbf P_k^{u, v}$ be the set of all directed paths of length $k$ in $G$ starting from vertex $u$ and ending at vertex $v$. Then there are constants $0<C_1<C_2$ such that the following hold:
\begin{enumerate}[(i)]
\item For each vertex $v\in V$ and for each $K\in\N$, there is an integer $k\geq K$ such that 
\[
\#\mathbf P_k^{v, v}\geq C_1\gamma^k.
\]
\item For all $k\in\N$,
\[
\sum_{u,v\in V} \#\mathbf P_k^{u, v}\leq C_2\gamma^k.
\]
\end{enumerate}
\end{lemma}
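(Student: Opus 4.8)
The plan is to deduce both parts from the classical Perron--Frobenius theory for nonnegative matrices, together with a number-theoretic input (the fact that the ``period gcd'' is $1$) to handle the possible periodicity of the graph.

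\textbf{Setup.} Let $A$ be the $|V|\times|V|$ adjacency matrix of the strongly connected digraph $G$, and let $\gamma$ be its spectral radius. Since $G$ is strongly connected, $A$ is irreducible, so by Perron--Frobenius $\gamma>0$ is a simple eigenvalue of $A$ with strictly positive left and right eigenvectors. The key identity is that $(A^k)_{uv}=\#\mathbf P_k^{u,v}$, the number of directed walks of length $k$ from $u$ to $v$. Thus part (ii) asks for an upper bound on the sum of all entries of $A^k$, and part (i) asks for a lower bound on a diagonal entry of $A^k$ for infinitely many $k$.

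\textbf{Part (ii).} First I would observe that $\sum_{u,v}(A^k)_{uv}=\mathbf 1^{T}A^k\mathbf 1$, where $\mathbf 1$ is the all-ones vector. If $w>0$ is a right Perron eigenvector of $A$ with $Aw=\gamma w$, then choosing $c>0$ so that $\mathbf 1\le c\,w$ componentwise (possible since $w$ has strictly positive entries and $V$ is finite) gives, by nonnegativity of $A^k$,
\[
\mathbf 1^{T}A^k\mathbf 1 \le c^{2}\, w^{T}A^k w \le c^{2}\,\gamma^{k}\, w^{T}w,
\]
wait---more directly, $A^k\mathbf 1\le c\,A^k w=c\,\gamma^k w\le c^2\gamma^k\mathbf 1$ (using $w\le c'\mathbf 1$ as well, absorbing constants). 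Summing the components yields $\sum_{u,v}(A^k)_{uv}\le C_2\gamma^k$ with $C_2$ depending only on $A$. This is elementary.

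\textbf{Part (i).} This is the main obstacle, because $A$ need not be primitive: the digraph may have period $d>1$, in which case $(A^k)_{vv}=0$ unless $d\mid k$, and one cannot hope for $\#\mathbf P_k^{v,v}\gtrsim\gamma^k$ for \emph{all} large $k$ — only for $k$ in a suitable residue class, which is exactly why the statement quantifies ``there is an integer $k\ge K$''. The plan is: (a) let $d$ be the period of $G$; then for $k$ a multiple of $d$, the matrix $A^d$ restricted to the cyclic class of $v$ is primitive with spectral radius $\gamma^d$, so by the standard limit $\gamma^{-kd'}\!\big(A^{d}\big)^{k}\to$ (rank-one positive matrix) as the exponent grows (here writing $k=d\cdot n$), we get $(A^{dn})_{vv}\ge C_1(\gamma^{d})^{n}=C_1\gamma^{dn}$ for all large $n$, and in particular for some $k=dn\ge K$. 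Thus $C_1$ can even be taken uniform in $v$ (take the minimum over the finitely many vertices of the per-vertex constants). Alternatively, and perhaps cleaner to write, one can avoid period bookkeeping entirely: since $A$ is irreducible, for each vertex $v$ choose a walk of length $\ell_v$ from $v$ back to $v$; then $\mathrm{tr}(A^k)\le |V|\max_v(A^k)_{vv}$ cannot be used directly, so instead use that $\mathrm{tr}(A^k)=\sum_i\lambda_i^k$ where $\lambda_i$ are the eigenvalues, and the eigenvalues of modulus $\gamma$ are exactly $\gamma,\gamma\zeta,\dots,\gamma\zeta^{d-1}$ with $\zeta$ a primitive $d$th root of unity (each simple); hence $\mathrm{tr}(A^k)=\gamma^k\big(1+\zeta^k+\dots+\zeta^{(d-1)k}\big)+O(\gamma_2^{k})$ with $\gamma_2<\gamma$, which equals $d\,\gamma^k+o(\gamma^k)$ whenever $d\mid k$ and is $o(\gamma^k)$ otherwise. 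So for $k$ a large multiple of $d$, $\mathrm{tr}(A^k)\ge \tfrac{d}{2}\gamma^k$, and therefore $(A^k)_{vv}\ge \tfrac{d}{2|V|}\gamma^k$ for \emph{some} vertex $v$ — but we need it for \emph{every} $v$. To upgrade to every $v$: given any $v$, pick walks $v\to v'$ and $v'\to v$ of total length $r$ (depending on $v,v'$, bounded by $|V|^2$); then $(A^{k+r})_{vv}\ge (A^k)_{v'v'}\cdot(\text{positive integer})\ge \tfrac{d}{2|V|}\gamma^{k}= \tfrac{d}{2|V|}\gamma^{-r}\gamma^{k+r}$, and since $k$ ranges over arbitrarily large multiples of $d$ while $r<|V|^2$ is bounded, the exponents $k+r$ are arbitrarily large; set $C_1:=\tfrac{d}{2|V|}\gamma^{-|V|^2}$. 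This gives part (i), with the integer ``$k$'' of the statement equal to our ``$k+r$'', which can be made $\ge K$. I expect the bookkeeping around the period $d$ and the passage from ``some vertex'' to ``every vertex'' to be the only genuinely delicate points; everything else is routine Perron--Frobenius.
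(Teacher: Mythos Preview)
Your proposal is correct. Part (ii) matches the paper's treatment (which simply calls it ``standard''). For part (i), however, the paper takes a shorter and more elementary route that sidesteps the period $d$ and the peripheral spectrum entirely: using a strictly positive \emph{left} Perron eigenvector $\mathbf{w}$ normalized so that $\max_i w_i=1$, the total number of length-$K$ walks in $G$ ending at the given vertex $v$ is
\[
\mathbf{1}A^K\mathbf{e}_v \ge \mathbf{w}A^K\mathbf{e}_v = \gamma^K(\mathbf{w}\,\mathbf{e}_v) \ge \gamma^K\min_i w_i.
\]
By pigeonhole some vertex $u$ satisfies $\#\mathbf{P}_K^{u,v}\ge N^{-1}\gamma^K\min_i w_i$ (where $N=|V|$), and prepending a shortest walk $v\to u$ of length $L\le N$ gives $\#\mathbf{P}_{K+L}^{v,v}\ge C_1\gamma^{K+L}$ with $C_1=(\min_i w_i)/(N\gamma^N)$. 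Your two approaches --- primitivity of $A^d$ restricted to a cyclic class, or the trace formula combined with the root-of-unity description of the eigenvalues on the spectral circle --- both work, but they invoke more of the Perron--Frobenius machinery and still need a concatenation step at the end to upgrade from ``some vertex'' to ``every vertex''. The paper's argument gets the ``every vertex'' conclusion in one stroke by counting walks \emph{into} the target vertex rather than closed walks, which is why no periodicity bookkeeping is needed.
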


\begin{proof}
By the Perron-Frobenius theorem, $\gamma$ is an eigenvalue of $A$ and there is a strictly positive left eigenvector $\mathbf{v}=[v_1 \dots v_N]$ of $A$ corresponding to $\gamma$, where $N:=\#V$. We may normalize $\mathbf{v}$ so that $\max v_i=1$. Set
\[
C_1:=\frac{\min_i v_i}{N\gamma^N}.
\]
Clearly, for any two vertices $u$ and $v$ in $V$, there is a path from $u$ to $v$ of length at most $N$. Fix $v\in V$ and $K\in\N$. Without loss of generality order $V$ so that $v$ is the first vertex. 
{Let $\mathbf{e}_1=[1\ 0\ \dots\ 0]^T$ be the first standard unit vector in $\R^N$, and let $\mathbf{1}=[1\ 1\ \dots\ 1]$ be the row vector of all $1$'s in $\R^N$.
}
The number of paths in $G$ of length $K$ starting anywhere in $G$ but ending at $v$ is
\[
\mathbf{1}A^K \mathbf{e}_1\geq \mathbf{v}A^K\mathbf{e}_1=\gamma^K \mathbf{v}\mathbf{e}_1\geq \gamma^K\min v_i.
\]
Hence there is a vertex $u$ in $V$ such that 
\[
\#\mathbf P_K^{u, v}\geq N^{-1}\gamma^K \min v_i.
\]
Let $L$ be the length of the shortest path in $G$ from $v$ to $u$; then $L\leq N$. Set $k:=K+L$. It follows that 
\[
\#\mathbf P_k^{v, v}\geq N^{-1}\gamma^K \min v_i=N^{-1}\gamma^{k-L}\min v_i\geq \frac{\min v_i}{N\gamma^N}\gamma^k=C_1\gamma^k.
\]
This proves (i). The proof of (ii) is standard (cf.~\cite[Chap.~4]{Lind_Marcus_1995}).
\end{proof}

Recall from \cite{Komornik_Loreti_2002} that the Komornik-Loreti constant $q_{KL}=q_{KL}(M)$ satisfies
\begin{equation} \label{eq:lambda}
\al(q_{KL})=\la_1\la_2\ldots,
\end{equation}
where for each $i\ge 1$,
\begin{equation*} 
\la_i=\la_i(M):=\begin{cases}
k+\tau_i-\tau_{i-1} & \qquad\textrm{if \quad$M=2k$},\\
k+\tau_i & \qquad\textrm{if \quad$M=2k+1$}.
\end{cases}
\end{equation*}
Here $(\tau_i)_{i=0}^\f=0110100110010110\ldots$ is the classical Thue-Morse sequence.  

In the proof below we use the sets
\[
\wvs_q:=\big\{(x_i)\in\Omega_M: \overline{\al(q)}\preceq\si^n((x_i))\preceq \al(q)\ \forall n\ge 0\big\}, \qquad q\in(1,M+1].
\]
It is well known (see \cite{Komornik-Kong-Li-17} or \cite{Allaart-Baker-Kong-17}) that $\dim_H \wus_q=\dim_H \wvs_q$ for every $q$.

\begin{proof}[Proof of Proposition \ref{prop:W-q-B}]
Fix $q\in\bb_L$. Then $q>q_{KL}$, so $\alpha(q)\succ\la_1\la_2\dots$, and hence there is a number $l_0\geq 1$ such that $\alpha_1\dots\alpha_{l_0-1}=\la_1\dots\la_{l_0-1}$ and $\alpha_{l_0}>\la_{l_0}$, where for brevity we put $\alpha_i:=\alpha_i(q)$.

By \cite[Lemma 3.16]{AlcarazBarrera-Baker-Kong-2016} (see also \cite{Allaart-Baker-Kong-17}), there is an increasing sequence $(l_n)$ of integers with $l_n>l_0$ such that for each $n$, there is an entropy plateau $[p_L(n),p_R(n)]$ with $\alpha(p_L(n))=(\alpha_1\dots\alpha_{l_n}^-)^\f$, and moreover $p_L(n)\nearrow q$. By the continuity of the function $p\mapsto \dim_H \wus_p$ it is enough to prove that $\dim_H \ws_q\geq \dim_H \wus_{p_L(n)}$ for each $n$. 

Fix therefore an integer $n$, and put $p:=p_L(n)$, and $l:=l_n$. Then $\wvs_p$ is a subshift of finite type, characterized by 
\[
(x_i)\in \wvs_p \qquad\Leftrightarrow \qquad \overline{\alpha_1\dots \alpha_l}\prec x_{k+1}\dots x_{k+l}\prec \alpha_1\dots \alpha_l \quad\forall k\geq 0.
\]
We represent $\wvs_p$ by a labeled directed graph $G=(V,E,L)$ in the usual way: the set $V$ of vertices consists of allowed words in $\wvs_p$ of length $l-1$, and there is an edge $uv$ from $u=x_1\dots x_{l-1}$ to $v=y_1\dots y_{l-1}$ if and only if $x_2\dots x_{l-1}=y_1\dots y_{l-2}$ and $x_1\dots x_{l-1}y_{l-1}$ is an allowed word in $\wvs_p$, in which case we label the edge $uv$ with $y_{l-1}$.

Assume first that $\wvs_p$ is transitive, so the graph $G$ is strongly connected. {Let $\gamma$ be the spectral radius of the adjacency matrix of $G$, and} let $C_1,C_2$ be the constants from Lemma \ref{lem:number-of-paths}(i). {Put $C:=\max\{C_2,C_1^{-1}\}$.} Let $\mathbf{u}=\alpha_1\dots\alpha_{l-1}$ be the lexicographically largest vertex in $V$.

Next, let $0<s<\dim_H \wus_p$. We will construct a subset $\mathbf{Y}$ of $\ws_q$ such that $\dim_H\mathbf{Y}\geq s$. Since the Hausdorff dimension of a subshift of finite type {is} given by its entropy, we have
\begin{equation}
s<\dim_H \wus_p=\dim_H \wvs_p=\log_2\gamma.
\label{eq:s-small-enough}
\end{equation}

Let $(m_j)_{j\in\N}$ be any strictly increasing sequence of positive integers with $m_1>l$ such that $\alpha_1\dots\alpha_{m_j}^-$ is admissible for each $j$. We claim that for each $j$ there exists a connecting block $b_1\ldots b_{n_j}$ such that 
$\al_1\ldots \al_{m_j}^-b_1\ldots b_{n_j}{\mathbf u}$ is an allowed word in $\wus_q$. This follows essentially from the proof of \cite[Proposition 3.17]{AlcarazBarrera-Baker-Kong-2016}, but for the reader's convenience we sketch the main idea.

Set $i_0:=m_j$. Recursively, for $\nu=0,1,2,\dots$, proceed as follows. If $i_\nu<l_0$, then stop; otherwise, let $i_{\nu+1}$ be the largest integer $i$ such that
\[
\alpha_{i_\nu-i+1}\dots\alpha_{i_\nu}=\overline{\alpha_1\dots \alpha_i}^+.
\]
(If no such $i$ exists, set $i_{\nu+1}=0$.) We now argue that
\begin{equation}
i_{\nu+1}<i_\nu \qquad\mbox{for every $\nu$}.
\label{eq:i-nu-decreasing}
\end{equation}
This will follow once we show that $\alpha_1\dots\alpha_k\succ \overline{\alpha_1\dots\alpha_k}^+$ for every $k\geq l_0$. This inequality is clear for $k\geq 2$, since $q>q_{KL}$ implies $\alpha_1>\overline{\alpha_1}$. On the other hand, if $l_0=1$, then $\alpha_1>\la_1\geq\overline{\la_1}^+> \overline{\alpha_1}^+$, yielding the inequality for $k=1$ as well.

In view of \eqref{eq:i-nu-decreasing}, this process eventually stops, say after $N=N(j)$ steps, with $i_N<l_0$. It is easy to check that $\alpha_1\dots\alpha_{i_\nu}^-$ is admissible for each $\nu<N$. Since $q\in\bb_L$ and $\alpha(q)\succ(\alpha_1\dots\alpha_{i_\nu}^-)^\f$, it follows that
\[
\alpha(q)\succ\alpha_1\dots\alpha_{i_\nu}(\overline{\alpha_1\dots\alpha_{i_\nu}}^+)^\infty, \qquad \nu=1,2,\dots,N-1.
\]
Hence there is a positive integer $k_\nu$ such that
\begin{equation}
\alpha(q)\succ\alpha_1\dots\alpha_{i_\nu}(\overline{\alpha_1\dots\alpha_{i_\nu}}^+)^{k_\nu}, \qquad \nu=1,2,\dots,N-1,
\label{eq:representation-of-alpha-q}
\end{equation}
where by $\alpha(q)\succ\beta_1\dots\beta_i$ we mean that $\alpha_1\dots\alpha_i\succ\beta_1\dots\beta_i$. Put
\[
B_\nu:=(\alpha_1\dots \alpha_{i_\nu}^-)^{k_\nu}, \qquad \nu=1,\dots,N-1,
\]
and $b_1\dots b_{n_{j}}:=B_1 B_2\dots B_{N-1}$, where if $N=1$ we take $B_1 B_2\dots B_{N-1}$ to be the empty word.

Since $|\mathbf{u}|=l-1\geq l_0$, it can be verified using the admissibility of $\alpha_1\dots\alpha_{i_\nu}^-$ for each $\nu$ that $\al_1\ldots \al_{m_j}^-b_1\ldots b_{n_j}{\mathbf u}$ is an allowed word in $\wus_q$.
Here we emphasize that the length $n_j$ of the connecting block depends only on $m_j$, since the word $\mathbf{u}$ is fixed throughout.

We now construct sequences $(r_j)$ and $(R_j)$ as follows: set $R_0=m_1+n_1$, and inductively, for $j=1,2,\dots$, we can choose by {\eqref{eq:s-small-enough} and} Lemma \ref{lem:number-of-paths} an integer $r$ large enough so that
\begin{equation}
{(\log_2 \gamma-s)r\geq (R_{j-1}+m_{j+1}+n_{j+1}+l-1)s+(j+2)\log_2 C}
\label{eq:choice-of-rj}
\end{equation}
and
\begin{equation}
\#\mathbf P_r^{\mathbf{u},\mathbf{u}}\geq C^{-1}\gamma^r.
\label{eq:P_r-lower-bound}
\end{equation}
Put 
\[
r_j:=r, \qquad\mbox{and} \qquad R_j:=R_{j-1}+r_j+m_{j+1}+n_{j+1},
\]
to complete the induction step.
We also set
\[
M_j:=\sum_{i=1}^j(m_i+n_i+r_i), \qquad N_j:=M_j+m_{j+1}, \qquad\mbox{for $j\geq 0$}.
\]

Now let $\mathbf{Y}$ be the set of sequences $(y_i)$ in $\Omega_M$ satisfying the following requirements for all $j\geq 0$:
\begin{enumerate}
\item $y_{M_j+1}\dots y_{M_j+m_{j+1}}=\alpha_1\dots\alpha_{m_{j+1}}^-$;
\item $y_{N_j+1}\dots y_{N_j+n_{j+1}}=b_1\dots b_{n_{j+1}}$;
\item {$y_{R_j+1}\dots y_{R_j+l-1}=\mathbf{u}$;}
\item $y_{R_j+l}\dots y_{M_{j+1}+l-1}=$ the word formed by reading the labels of any path of length $r_{j+1}$ in $G$ that starts and ends at $\mathbf{u}$.
\end{enumerate}
{Note that (4) is consistent with (1) despite the overlapping definitions, since for each $j$, $\mathbf{u}$ is a prefix of $\alpha_1\dots\alpha_{m_j}^-$.} By the construction of the connecting block $b_1\dots b_{n_{j+1}}$, the word $y_{M_j+1}\dots y_{M_{j+1}}$ is allowed in $\wus_q$, for each $j$. It now follows easily that $\mathbf{Y}\subset \ws_q$.  

Next, we construct a mass distribution on $\mathbf{Y}$. Let $t_j$ denote the number of words satisfying the requirement of {(4)} above, and note that by \eqref{eq:P_r-lower-bound},
\begin{equation}
t_j\geq C^{-1}\gamma^{r_{j+1}}, \qquad j\geq 0.
\label{eq:t-lower-bound}
\end{equation}
Define a measure $\mu$ on $\mathbf{Y}$ by
\begin{equation}
\mu([y_1\dots y_k])=\frac{\tilde{t}_j(y_1\dots y_k)}{\prod_{i=0}^j t_i}, \qquad\mbox{for $j\geq 0$ and $R_j+l-1\leq k\leq M_{j+1}$},
\label{eq:definition-of-mu}
\end{equation}
where {$[y_1\dots y_k]:=\{(x_i)\in\mathbf{Y}: x_1\dots x_k=y_1\dots y_k\}$ is the cylinder generated by $y_1\dots y_k$, and} $\tilde{t}_j(y_1\dots y_k)$ is the number of paths in $G$ of length $M_{j+1}+l-1-k$ starting at vertex $y_{k-l+2}\dots y_k$ and ending at $\mathbf{u}$. Observe that
\begin{equation}
\tilde{t}_j(y_1\dots y_k)\leq C\gamma^{M_{j+1}+l-1-k}.
\label{eq:t-tilde-bound}
\end{equation}
We complete the definition of $\mu$ by setting $\mu([y_1\dots y_k])=1$ for $k<R_0+l-1$, and
\begin{equation}
\mu([y_1\dots y_k])=\mu([y_1\dots y_{M_{j}}]), \qquad\mbox{for $j\geq 1$ and $M_{j}<k<R_{j}+l-1$}.
\label{eq:definition-of-mu-complete}
\end{equation}
{It is easy to see that Kolmogorov's consistency conditions are satisfied, so that $\mu$ defines a unique mass distribution on $\mathbf{Y}$. We claim that}
\begin{equation}
\mu([y_1\dots y_k])\leq {\tilde{C}}\big(\diam([y_1\dots y_k])\big)^s
\label{eq:mass-distribution-inequality}
\end{equation}
for any $k\in\N$ and any cylinder $[y_1\dots y_k]$, {where $\tilde{C}:=C^2 2^{(R_0+l-1)s}$.
Observe that $\diam([y_1\dots y_k])=2^{-k}$.
It is clearly sufficient to check \eqref{eq:mass-distribution-inequality} for $R_j+l-1\leq k\leq M_{j+1}$, where $j\geq 0$. Assuming $k$ is in this range, \eqref{eq:definition-of-mu} and the estimates \eqref{eq:t-lower-bound}, \eqref{eq:t-tilde-bound} give
\begin{align*}
\log_2\mu([y_1\dots y_k])+ks &\leq (j+2)\log_2 C+\left(M_{j+1}+l-1-k-\sum_{i=1}^{j+1}r_i\right)\log_2\gamma+ks\\
&\leq (R_j+l-1)s+(j+2)\log_2 C-\sum_{i=1}^j r_i\log_2\gamma,
\end{align*}
using that $\log_2\gamma>s$ and $M_{j+1}=R_j+r_{j+1}$.
For $j=0$ this last expression reduces to $(R_0+l-1)s+2\log_2 C=\log_2\tilde{C}$. For $j\geq 1$, it can be written as
\begin{align*}
(R_{j-1}+m_{j+1}+n_{j+1}+l-1)s+(j+2)\log_2 C-\sum_{i=1}^{j-1}r_i\log_2\gamma-r_j(\log_2\gamma-s),
\end{align*}
which is $\leq 0$ by \eqref{eq:choice-of-rj}. Thus, in either case, we obtain \eqref{eq:mass-distribution-inequality}.}

By the mass distribution principle, \eqref{eq:mass-distribution-inequality} implies $\dim_H \ws_q\geq\dim_H \mathbf{Y}\geq s$, as required. Finally, since $s<\dim_H \wus_p$ was arbitrary, we conclude that $\dim_H \ws_q\geq \dim_H \wus_p$.

When $\wvs_p$ is not transitive,  $\wvs_p$ contains by \cite[Lemma 5.9]{AlcarazBarrera-Baker-Kong-2016} a transitive subshift $\mathbf{Z}_p$ of finite type with the same entropy $\log\gamma$, and $\alpha(p)\in\mathbf{Z}_p$. Hence the directed graph associated with $\mathbf{Z}_p$ still contains the vertex $\alpha_1\dots\alpha_{l-1}$, and the above argument goes through with $\mathbf{Z}_p$ replacing $\wvs_p$.
\end{proof}

\begin{proof}[Proof of Theorem \ref{main4}] {Note that $\ws_q=\emptyset$ for any $q\in(1, M+1]\setminus\overline{\ub}$.
In view of Propositions \ref{prop:W-q-C} and \ref{prop:W-q-B} it remains to prove the theorem for $q\in\overline{\ub}\setminus(\cb\cup\bb_L)$. Then $q\in\overline{\ub}\cap(q_L,q_R]$ for some relative plateau $[q_L,q_R]$. Assume $J=J_{\mathbf{i}}=[q_L,q_R]$ is the {\em smallest} such plateau, and let its generating word be $a_1\dots a_m$. Then either $q\in\bb(J)$ or $q$ is the left endpoint of $J_{\mathbf{i}j}$ for some $j\in\N$.} Let $\hat{q}:=\hat{\Phi}_J(q)$. Then $\hat{q}\in\bb_L^*$, so using Lemma \ref{lem:W-bridge}, Proposition \ref{prop:W-q-B}, and Proposition \ref{th:characterization of hat-Phi-J}(iii) we obtain
\begin{equation*}
\dim_H \ws_q=\frac{1}{m}\dim_H \ws_{\hat{q}}^*=\frac{1}{m}\dim_H \wus_{\hat{q}}^*
=\frac{h\big(\wus_{\hat{q}}^*\big)}{m\log 2}=\frac{h(\wus_q(J))}{\log 2}.
\end{equation*}
By Lemma \ref{lem:bi-Lipschitz} and Theorem \ref{main1} this implies 
\[
\dim_H\mathcal W_q=\frac{\log 2}{\log q}\dim_H\ws_q=\frac{h(\wus_q(J))}{\log q}=f_-(q),
\]
completing the proof.
\end{proof}

\section*{Acknowledgments}

Allaart was partially sponsored by NWO visitor's travel grant 040.11.647/4701. Allaart furthermore wishes to thank the mathematics department of Utrecht University, and in particular Karma Dajani, for their warm hospitality during a sabbatical visit in the spring of 2018 when much of this work was undertaken. Kong was supported by NSFC No. 11401516. {Kong would like to thank the Mathematical Institute of Leiden University.}


\end{document}